\documentclass[12pt]{amsart}

\usepackage{bold-extra}
\usepackage{tikz-cd}
\usepackage{amsfonts}
\usepackage{amsthm}
\usepackage{xfrac}
\usepackage{faktor}
\usepackage{amssymb}
\usepackage{enumitem}
\usepackage{microtype}
\usepackage{mathrsfs}

\title[Gorenstein modules over large families of infinite groups]{Gorenstein modules and dimension over large families of infinite groups}
\author{Dimitra-Dionysia Stergiopoulou}
\keywords{Gorenstein homological algebra, Group ring, Gorenstein projective module, Gorenstein flat module, Gorenstein injective module, Gorenstein homological dimension of group}
\subjclass{Primary: 16E05, 16E10, 18G20, 18G25}
%\thanks{Research supported by the Hellenic Foundation for Research and Innovation (H.F.R.I.) under the ”1st Call for H.F.R.I. Research Projects to support Faculty members and Researchers and the procurement of high-cost research equipment grant”, project number 4226.}

\oddsidemargin=-0.1in
\evensidemargin=-0.1in
\textwidth=6.6in
\topmargin=-0.0in
\textheight=9.1in

\newtheorem{Lemma}{Lemma}[section]
\newtheorem{Proposition}[Lemma]{Proposition}
\newtheorem{Theorem}[Lemma]{Theorem}
\newtheorem{Corollary}[Lemma]{Corollary}
\newtheorem{Remark}[Lemma]{Remark}
\newtheorem{Definition}[Lemma]{Definition}
\newtheorem{Question}[Lemma]{Question}

\DeclareMathOperator{\sfli}{sfli}

\begin{document}

\begin{abstract} 
 We give characterizations of Gorenstein projective, Gorenstein flat and Gorenstein injective modules over the group algebra for large families of infinite groups and show that every weak Gorenstein projective, weak Gorenstein flat and weak Gorenstein injective module is Gorenstein projective, Gorenstein flat and Gorenstein injective, respectively. These characterizations provide Gorenstein analogues of Benson's cofibrant modules. We deduce that, over a commutative ring of finite Gorenstein weak global dimension, every Gorenstein projective module is Gorenstein flat. Moreover, we study cases where the tensor product and the group of homomorphisms between modules over the group algebra is a Gorenstein module. Finally, we determine the Gorenstein homological dimension of an $\textsc{\textbf{lh}}\mathfrak{F}$-group over a commutative ring of finite Gorenstein weak global dimension.
\end{abstract}

\maketitle

\section{Introduction}Gorenstein homological algebra is the relative homological algebra which is based on Gorenstein projective, Gorenstein injective and Gorenstein flat modules. The standard reference for these modules and for the relative dimensions which are based on them is \cite{H1}. Recently, Saroch and Stovicek \cite{SS} introduced the notion of projectively coresolved Gorenstein flat modules (PGF modules, for short). Over a ring $R$, these modules are the syzygies of the acyclic complexes of projective modules that remain acyclic after applying the functor $I\otimes_R  \_\!\_$ for every injective module $I$. It is clear that PGF modules are Gorenstein flat. While in classical homological algebra every projective module is flat, the relation between Gorenstein projective and Gorenstein flat modules is still mysterious. As shown in \cite[Theorem 4.4]{SS}, every PGF module is Gorenstein projective. A natural question is whether the class of Gorenstein projective modules is contained in the class of PGF modules. 

In this paper, we study Gorenstein projective, Gorenstein flat and Gorenstein injective modules over large families of infinite groups. We assume first that the commutative ring $R$ is of finite Gorenstein weak global dimension and that $G$ is an $\textsc{\textbf{lh}}\mathfrak{X}$-group, where $\mathfrak{X}$ is the class of groups admitting a weak characteristic module. We note that the class $\textsc{\textbf{lh}}\mathfrak{X}$ contains the class of $\textsc{\textbf{lh}}\mathfrak{F}$-groups and the class of groups of type $\Phi_R$. We prove that every weak Gorenstein flat module is Gorenstein flat and every weak Gorenstein projective module is PGF.  Moreover, we give characterizations of Gorenstein projective, Gorenstein flat, PGF and Gorenstein injective modules in terms of the module $B(G,R)$ of all bounded $R$-valued functions on $G$, which provide Gorenstein analogues of Benson's cofibrant modules (see Theorems \ref{cora} and \ref{Cora}). 
\begin{Theorem}Let $R$ be a commutative ring of finite Gorenstein weak global dimension and $G$ be an $\textsc{\textbf{lh}}\mathfrak{X}$-group. Then:
	\begin{itemize}
		\item[(i)] ${\tt GFlat}(RG)={\tt WGFlat}(RG)=\mathscr{X}_{B,{\tt GFlat}}$, where $\mathscr{X}_{B,{\tt GFlat}}=\{M\in \textrm{Mod}(RG): M\otimes_R B(G,R)\in {\tt GFlat}(RG)\}$.
		\item[(ii)] $\mathscr{X}_{B,{\tt PGF}}={\tt PGF}(RG)={\tt WGProj}(RG)={\tt GProj}(RG)$, where  $\mathscr{X}_{B,{\tt PGF}}=\{M\in \textrm{Mod}(RG): M\otimes_R B(G,R)\in {\tt PGF}(RG)\}$.
	\end{itemize}
\end{Theorem}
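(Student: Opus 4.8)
The plan is to reduce both assertions to the $B(G,R)$-characterizations of Theorems~\ref{cora} and~\ref{Cora} together with the evident inclusions ${\tt PGF}(RG)\subseteq{\tt GFlat}(RG)$, ${\tt PGF}(RG)\subseteq{\tt GProj}(RG)$ (the latter by \cite[Theorem~4.4]{SS}), ${\tt GProj}(RG)\subseteq{\tt WGProj}(RG)$ and ${\tt GFlat}(RG)\subseteq{\tt WGFlat}(RG)$. Granting these, it is enough to prove the four inclusions ${\tt WGFlat}(RG)\subseteq\mathscr{X}_{B,{\tt GFlat}}$, $\mathscr{X}_{B,{\tt GFlat}}\subseteq{\tt GFlat}(RG)$, ${\tt WGProj}(RG)\subseteq\mathscr{X}_{B,{\tt PGF}}$ and $\mathscr{X}_{B,{\tt PGF}}\subseteq{\tt PGF}(RG)$; then tracing the cycles ${\tt GFlat}\subseteq{\tt WGFlat}\subseteq\mathscr{X}_{B,{\tt GFlat}}\subseteq{\tt GFlat}$ and ${\tt PGF}\subseteq{\tt GProj}\subseteq{\tt WGProj}\subseteq\mathscr{X}_{B,{\tt PGF}}\subseteq{\tt PGF}$ (all over $RG$) yields the stated equalities.

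For $\mathscr{X}_{B,{\tt GFlat}}\subseteq{\tt GFlat}(RG)$ and $\mathscr{X}_{B,{\tt PGF}}\subseteq{\tt PGF}(RG)$ --- that a module is Gorenstein flat, resp.\ PGF, as soon as its ``Benson twist'' $M\otimes_R B(G,R)$ is --- I would invoke Theorems~\ref{cora} and~\ref{Cora}. This is where the hypotheses are used: one first settles the case $G\in\mathfrak{X}$ by means of a weak characteristic module of $G$, which bounds the Gorenstein flat dimension over $RG$ of the modules in question in terms of finitely many restrictions and of the finite Gorenstein weak global dimension of $R$, and then propagates the statement up Kropholler's hierarchy $\textsc{\textbf{h}}\mathfrak{X}$ and through local direct limits by the standard transfinite induction, using that the classes of Gorenstein flat and of PGF modules are stable under the operations appearing at each stage.

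For ${\tt WGFlat}(RG)\subseteq\mathscr{X}_{B,{\tt GFlat}}$ and ${\tt WGProj}(RG)\subseteq\mathscr{X}_{B,{\tt PGF}}$ I would start from Benson's short exact sequence of $RG$-modules $0\to R\to B(G,R)\to\overline{B}(G,R)\to 0$, in which $B(G,R)$ and its cokernel $\overline{B}(G,R)$ are free as $R$-modules, so that tensoring over $R$ with a module $M$ gives the $R$-split, hence exact, sequence
\[
0\longrightarrow M\longrightarrow M\otimes_R B(G,R)\longrightarrow M\otimes_R\overline{B}(G,R)\longrightarrow 0 .
\]
The auxiliary facts I would use are: tensoring over $R$ with a flat $R$-module preserves ${\tt GFlat}(RG)$ and ${\tt PGF}(RG)$ (apply $-\otimes_R N$ to a defining acyclic complex and observe that the required exactness after $I\otimes_R-$ survives, since a flat $R$-module is a filtered colimit of finitely generated free modules); both classes are closed under extensions (for ${\tt GFlat}$ this is \cite{SS}); and $RG\otimes_R B(G,R)$ is a free $RG$-module by the tensor identity. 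The upshot is that the functor $-\otimes_R B(G,R)$ turns a module satisfying the Gorenstein flatness, resp.\ projectivity, condition only in the weak sense into one satisfying it genuinely: it supplies the complete resolution that the weak hypothesis does not provide, so a weak Gorenstein flat $M$ yields a Gorenstein flat $M\otimes_R B(G,R)$, and a weak Gorenstein projective $M$ yields a PGF module $M\otimes_R B(G,R)$.

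I expect the genuine difficulty to lie in the inclusions $\mathscr{X}_{B,\bullet}\subseteq\bullet$, i.e.\ in recovering from the hypothesis on $M\otimes_R B(G,R)$ that $M$ itself is Gorenstein flat or PGF --- the Benson--Cornick--Kropholler-type statement underlying Theorems~\ref{cora} and~\ref{Cora}. It is precisely there that the structure of $\textsc{\textbf{lh}}\mathfrak{X}$-groups and the finiteness of the Gorenstein weak global dimension of $R$ are indispensable, via the transfinite induction over Kropholler's hierarchy with the weak characteristic module at its base; keeping the Gorenstein flat dimension --- not merely the flat dimension --- under uniform control along the induction, and checking that the PGF refinement of \cite{SS} is preserved at every step of the hierarchy and at every local limit, is the delicate part.
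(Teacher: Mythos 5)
Your reduction to the four inclusions ${\tt WGFlat}(RG)\subseteq\mathscr{X}_{B,{\tt GFlat}}$, $\mathscr{X}_{B,{\tt GFlat}}\subseteq{\tt GFlat}(RG)$, ${\tt WGProj}(RG)\subseteq\mathscr{X}_{B,{\tt PGF}}$ and $\mathscr{X}_{B,{\tt PGF}}\subseteq{\tt PGF}(RG)$ matches the skeleton of the paper's argument, but neither substantive inclusion is actually proved. For $\mathscr{X}_{B,\bullet}\subseteq\bullet$ you ``invoke Theorems \ref{cora} and \ref{Cora}'', which is circular: the statement under proof \emph{is} the conjunction of those theorems. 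The sketch you then give (weak characteristic module at the base, transfinite induction up the hierarchy, local limits) is not how this direction goes and is in fact the mechanism used for the \emph{opposite} direction; in the paper, $\mathscr{X}_{B,{\tt GFlat}}\subseteq{\tt GFlat}(RG)$ and $\mathscr{X}_{B,{\tt PGF}}\subseteq{\tt PGF}(RG)$ (Propositions \ref{theor2} and \ref{Ppst}) involve no hierarchy induction at all: one splices a flat (resp.\ projective) resolution of $M$ with the coresolution $0\to M\to M\otimes_R V_0\to M\otimes_R V_1\to\cdots$, where $V_i=\overline{B}^{\otimes i}\otimes_R B$ comes from the $R$-split sequence $0\to R\to B\to\overline{B}\to 0$, and then applies the stability theorems ${\tt GFlat}^{(2)}_{\mathcal{I}}={\tt GFlat}$ of \cite{BK} and ${\tt PGF}^{(2)}_{\mathcal{I}}={\tt PGF}$ of \cite{KS}, verifying acyclicity of $I\otimes_{RG}(\mathfrak{P}\otimes_R B)$ by a Tor-vanishing argument after reducing from $I$ to $B\otimes_R I$. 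None of these ingredients (coresolution, splice, stability results, the $B\otimes_R I$ reduction) appears in your proposal, and without some such device you cannot ``untwist'' the hypothesis on $M\otimes_R B$: the splitting of $R\to B$ is only $R$-linear, so $M$ is not an $RG$-direct summand of $M\otimes_R B$.

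More seriously, for ${\tt WGFlat}(RG)\subseteq\mathscr{X}_{B,{\tt GFlat}}$ and ${\tt WGProj}(RG)\subseteq\mathscr{X}_{B,{\tt PGF}}$ you treat as an ``auxiliary fact'' that $-\otimes_R N$ with $N$ flat (resp.\ projective) over $R$ preserves ${\tt GFlat}(RG)$ (resp.\ ${\tt PGF}(RG)$), justified by tensoring a defining complex and citing Lazard's theorem. This is exactly the open problem highlighted in the introduction: the required exactness is after $I\otimes_{RG}-$ for injective $RG$-modules $I$, which does not survive formally, and a weak Gorenstein flat (projective) module carries no total acyclicity condition to begin with. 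In the paper this preservation statement is itself a main result (Propositions \ref{prop1} and \ref{Prop1}), and it is precisely where the hypotheses are consumed: the weak characteristic module gives $\textrm{sfli}(RH)<\infty$ at the base of the hierarchy, forcing any acyclic complex of flat (projective) $RH$-modules to become totally acyclic after tensoring; the permutation resolutions propagate a bound on $\textrm{Gfd}$ (resp.\ $\textrm{PGF-dim}$) up $\textsc{\textbf{h}}\mathfrak{X}$, combined with the trick of pushing $M$ back along a partial flat (projective) resolution; and the $\textsc{\textbf{lh}}$ step uses direct limits for Gorenstein flatness but a cardinality induction via \cite[Proposition 4.5]{St1} for PGF, since PGF modules are not known to be closed under filtered colimits. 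So the proposal has the right formal skeleton but assigns the hard content to the wrong inclusions and replaces it with arguments that are circular or would fail.
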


As a result, we generalize \cite[Theorem 8.4]{St1} over any commutative ring $R$ of finite Gorenstein weak global dimension (see Corollary \ref{nionia}). 

\begin{Theorem} Let $R$ be a commutative ring of finite Gorenstein weak global dimension and $G$ be an $\textsc{\textbf{lh}}\mathfrak{X}$-group. Then, every Gorenstein projective $RG$-module is Gorenstein flat.
\end{Theorem}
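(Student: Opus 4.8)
The plan is to read this off from part (ii) of the preceding Theorem. The key point established there is the chain of equalities ${\tt PGF}(RG)={\tt WGProj}(RG)={\tt GProj}(RG)$ (all also equal to $\mathscr{X}_{B,{\tt PGF}}$); in particular ${\tt GProj}(RG)\subseteq{\tt PGF}(RG)$. So I would take an arbitrary Gorenstein projective $RG$-module $M$ and, invoking that Theorem, conclude that $M$ is a PGF $RG$-module, i.e. a syzygy of an acyclic complex $P_\bullet$ of projective $RG$-modules such that $I\otimes_{RG}P_\bullet$ stays acyclic for every injective $RG$-module $I$.

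It then remains only to recall the ring-independent fact, noted in the Introduction following \cite{SS}, that every PGF module is Gorenstein flat. This is immediate from the definitions: the complex $P_\bullet$ witnessing that $M$ is PGF consists of projective, hence flat, $RG$-modules, and by hypothesis it remains acyclic under $I\otimes_{RG}\_\!\_$ for all injective $I$; thus $P_\bullet$ is an $F$-totally acyclic complex of flat modules having $M$ as a syzygy, so $M$ is Gorenstein flat. Combining the two steps gives ${\tt GProj}(RG)\subseteq{\tt PGF}(RG)\subseteq{\tt GFlat}(RG)$, which is the assertion. Alternatively, one may route through part (i): since $M\otimes_R B(G,R)\in{\tt PGF}(RG)\subseteq{\tt GFlat}(RG)$, the characterization ${\tt GFlat}(RG)={\tt WGFlat}(RG)=\mathscr{X}_{B,{\tt GFlat}}$ yields $M\in{\tt GFlat}(RG)$ at once.

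As for difficulty: there is essentially no obstacle left at this stage, since all the substance is concentrated in the preceding Theorem, whose proof must handle the equality ${\tt GProj}(RG)={\tt PGF}(RG)$ over the group algebra of an $\textsc{\textbf{lh}}\mathfrak{X}$-group. Granting that, the present statement is a formal consequence, the only extra ingredient being the standard inclusion ${\tt PGF}(RG)\subseteq{\tt GFlat}(RG)$. If one insisted on a proof independent of the full strength of the preceding Theorem, the hard part would be precisely to reprove that every Gorenstein projective $RG$-module is PGF — the content one does not want to duplicate — after which the same two-line argument applies.
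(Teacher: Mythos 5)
Your proposal is correct and matches the paper's own argument: the paper proves this as a corollary of Theorem \ref{Cora}, using precisely the chain ${\tt GProj}(RG)={\tt PGF}(RG)\subseteq{\tt GFlat}(RG)$. The alternative route via part (i) is a harmless variant, but the main argument is essentially identical to the paper's.
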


Furthermore, denoting by $\mathfrak{Y}$ the class of groups admitting a characteristic module, we prove analogous results for Gorenstein injective $RG$-modules over any commutative ring $R$ of finite Gorenstein global dimension and any $\textsc{\textbf{h}}\mathfrak{Y}$-group $G$ (see Theorem \ref{Iicora}).

\begin{Theorem}Let $R$ be a commutative ring of finite Gorenstein global dimension and $G$ be an $\textsc{\textbf{h}}\mathfrak{Y}$-group. Then, $\mathscr{Y}_{B,{\tt GInj}}={\tt GInj}(RG)={\tt WGInj}(RG)$, where $\mathscr{Y}_{B,{\tt GInj}}=\{M\in \textrm{Mod}(RG): \textrm{Hom}_R (B(G,R),M)\in {\tt GInj}(RG)\}$.
\end{Theorem}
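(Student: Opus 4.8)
The plan is to prove the three inclusions
\[
{\tt WGInj}(RG)\ \subseteq\ \mathscr{Y}_{B,{\tt GInj}}\ \subseteq\ {\tt GInj}(RG)\ \subseteq\ {\tt WGInj}(RG),
\]
the last of which is immediate from the definitions. For the middle inclusion I would argue as in the corresponding part of Theorem \ref{Cora}: the characteristic module $B(G,R)$ fits into an $R$-split short exact sequence $0\to R\to B(G,R)\to\overline{B}(G,R)\to 0$, so applying $\textrm{Hom}_R(-,M)$ exhibits $M$ as an $RG$-cokernel of a monomorphism with target $\textrm{Hom}_R(B(G,R),M)$; one then pushes Gorenstein injectivity down to $M$ using the good behaviour of $\overline{B}(G,R)$ along the hierarchy defining $\textsc{\textbf{h}}\mathfrak{Y}$ (this is where one needs $\textsc{\textbf{h}}\mathfrak{Y}$, not merely a local version) together with the standard closure properties of the class of Gorenstein injective modules. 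The substance of the theorem is the first inclusion, which I would handle via a transfer construction through the functor $\textrm{Hom}_R(B(G,R),-)$.

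So let $M\in{\tt WGInj}(RG)$, witnessed by an acyclic complex $\mathbf{I}$ of injective $RG$-modules having $M$ as a cocycle, and set $\mathbf{J}=\textrm{Hom}_R(B(G,R),\mathbf{I})$, with the diagonal $RG$-action, formed degreewise. First I would check that each term of $\mathbf{J}$ is an injective $RG$-module: since $B(G,R)$ is flat over $R$, the functor $\textrm{Hom}_R(B(G,R),-)$ carries injective $RG$-modules to injective $RG$-modules, because of the adjunction isomorphism $\textrm{Hom}_{RG}(-,\textrm{Hom}_R(B(G,R),I))\cong\textrm{Hom}_{RG}(-\otimes_R B(G,R),I)$ and the exactness of $-\otimes_R B(G,R)$. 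Next I would check that $\mathbf{J}$ is acyclic with $\textrm{Hom}_R(B(G,R),M)$ one of its cocycles: since $RG$ is free over $R$ the terms of $\mathbf{I}$ are injective as $R$-modules, and since $R$ has finite Gorenstein global dimension and $B(G,R)$ is $R$-flat, $B(G,R)$ has finite projective dimension over $R$; a dimension-shift along $\mathbf{I}$ then shows $\textrm{Ext}^{\geq 1}_R(B(G,R),-)$ vanishes on the cocycles of $\mathbf{I}$, so $\textrm{Hom}_R(B(G,R),-)$ preserves the exactness of $\mathbf{I}$. At this point $\textrm{Hom}_R(B(G,R),M)$ is at least a weak Gorenstein injective $RG$-module.

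It then remains to promote $\mathbf{J}$ from acyclic to totally acyclic, i.e. to show that $\textrm{Hom}_{RG}(E,\mathbf{J})$ is acyclic for every injective $RG$-module $E$. By the same adjunction this complex is $\textrm{Hom}_{RG}(E\otimes_R B(G,R),\mathbf{I})$, so by the dimension-shift argument it would suffice to prove that $E\otimes_R B(G,R)$ has finite projective dimension over $RG$. This is the step where all the hypotheses come together: one would use that $B(G,R)$ has finite projective dimension over $RG$ (a fact about $\textsc{\textbf{h}}\mathfrak{Y}$-groups over a ring of finite Gorenstein global dimension, available from the earlier theory of characteristic modules), choose a finite $RG$-projective resolution of $B(G,R)$, observe that all its syzygies are $R$-flat so that the resolution is $R$-pure and remains exact after applying $E\otimes_R-$, and thereby reduce to bounding $\textrm{pd}_{RG}(E\otimes_R P)$ for $P$ free over $RG$; here the untwisting isomorphism identifies $E\otimes_R RG$ with the $RG$-module induced from $E$ viewed as an $R$-module, whose projective dimension over $RG$ equals $\textrm{pd}_R(E)$, and $E$ restricts to an injective $R$-module, so $\textrm{pd}_R(E)\leq\spli(R)<\infty$ because $R$ has finite Gorenstein global dimension. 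This makes $\mathbf{J}$ totally acyclic, gives $\textrm{Hom}_R(B(G,R),M)\in{\tt GInj}(RG)$, hence $M\in\mathscr{Y}_{B,{\tt GInj}}$, and together with the two easy inclusions yields the stated equalities. I expect the main obstacle to be exactly this last step — establishing finiteness of $\textrm{pd}_{RG}(E\otimes_R B(G,R))$ for injective $E$ — since it is the point at which the nontrivial inputs (finite projective dimension of $B(G,R)$ over $RG$ for $\textsc{\textbf{h}}\mathfrak{Y}$-groups, and finiteness of $\spli(R)$) must be combined, everything else being formal adjunction-and-dimension-shift bookkeeping.
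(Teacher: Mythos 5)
Your decomposition into three inclusions matches the paper's, and the formal parts (injectivity and exactness of $\textrm{Hom}_R(B(G,R),\mathbf{I})$, the adjunction, the dimension-shift showing that modules of finite projective dimension see acyclic complexes of injectives as $\textrm{Hom}$-acyclic) are fine. The genuine gap is the ingredient on which your whole argument for ${\tt WGInj}(RG)\subseteq\mathscr{Y}_{B,{\tt GInj}}$ rests: the claim that $\textrm{pd}_{RG}B(G,R)<\infty$ for every $\textsc{\textbf{h}}\mathfrak{Y}$-group over a ring with $\textrm{spli}(R)<\infty$. This is not available from the theory of characteristic modules and is false in general. Membership in $\mathfrak{Y}$ only provides \emph{some} characteristic module, not finiteness of $\textrm{pd}_{RG}B(G,R)$; and if $\textrm{pd}_{RG}B(G,R)$ were finite, $B(G,R)$ would itself be a characteristic module, forcing $\textrm{Gcd}_RG<\infty$ and $\textrm{spli}(RG)<\infty$ --- finiteness hypotheses the theorem does not make. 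Concretely, take $R=\mathbb{Z}$ (so $\textrm{spli}(\mathbb{Z})=1$) and $G$ a free abelian group of countably infinite rank: $G$ is a countable $\textsc{\textbf{lh}}\mathfrak{F}$-group, hence lies in $\textsc{\textbf{h}}\mathfrak{F}\subseteq\textsc{\textbf{h}}\mathfrak{Y}$, yet $\textrm{Gcd}_{\mathbb{Z}}G=\infty$, $\textrm{spli}(\mathbb{Z}G)=\infty$ and $\textrm{pd}_{\mathbb{Z}G}B(G,\mathbb{Z})=\infty$. So your reduction (finite $RG$-projective resolution of $B(G,R)$, tensor with an injective $E$, untwist) cannot even start, and total acyclicity of $\textrm{Hom}_R(B(G,R),\mathbf{I})$ cannot be obtained this way; the paper's own final remark on the invariant $\textrm{f.k}_{\mathfrak{X}}$ and Theorem \ref{theo712} signals precisely that such finiteness of $\textrm{(f)d}_{RG}B(G,R)$ beyond $\textsc{\textbf{lh}}\mathfrak{F}$-groups is unknown.

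What replaces your global finiteness argument in the paper is a transfinite induction along the $\textsc{\textbf{h}}\mathfrak{Y}$-hierarchy (Proposition \ref{Iprop1}, proved for $\textrm{Hom}_R(N,M)$ with any $R$-projective $N$): at the base, a group in $\mathfrak{Y}$ has a characteristic module, so $\textrm{spli}$ of its group ring is finite and there your ``automatic total acyclicity'' argument does apply; at higher stages one uses the finite-dimensional contractible CW-complex to produce an $R$-split resolution of $R$ by permutation modules, applies $\textrm{Hom}_R(-,\textrm{Hom}_R(N,M))$ to write the terms as products of modules coinduced from smaller subgroups (Gorenstein injective by induction and Lemma \ref{Illem45}, and ${\tt GInj}$ is closed under products), obtaining $\textrm{Gid}_{RG}\textrm{Hom}_R(N,M)\leq r$; finally, realizing $M$ as an $r$-th cosyzygy of another weak Gorenstein injective module and applying the same bound to it reduces the Gorenstein injective dimension to zero via \cite[Theorem 2.22]{H1}. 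Your treatment of the other nontrivial inclusion $\mathscr{Y}_{B,{\tt GInj}}\subseteq{\tt GInj}(RG)$ is also only a gesture: in the paper this is Proposition \ref{Itheor2}, proved not by running along the hierarchy but by splicing the coresolution built from $V_i=\overline{B}^{\otimes i}\otimes_R B$ with an injective resolution of $M$ and invoking the stability theorem for Gorenstein injective modules.
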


One of the open problems of Gorenstein homological algebra asks whether the tensor product of Gorenstein projective modules (respectively, Gorenstein flat modules) is Gorenstein projective (respectively, Gorenstein flat). Similarly, it is an open problem whether the group of homomorphisms between a Gorenstein projective and a Gorenstein injective module is Gorenstein injective. Even in the special case of modules over group rings these problems are open. Consider a commutative ring $R$ and a group $G$. It follows easily from the definitions and the diagonal action of $G$ that for every projective (respectively, flat) $RG$-module $M$ and every $RG$-module $N$ which is Gorenstein projective (respectively, Gorenstein flat) as $R$-module, the diagonal $RG$-module $M\otimes_R N$ is Gorenstein projective (respectively, Gorenstein flat). Similarly, for every projective $RG$-module $N$ and every $RG$-module $M$ which is Gorenstein injective as $R$-module, the diagonal $RG$-module $\textrm{Hom}(N,M)$ is Gorenstein injective. However, switching the roles of $R$-modules and $RG$-modules gives an open problem.

The following results of this paper, which will be crucial in our characterizations in Theorems 1.1 and 1.3, give an answer to this question for the large families of infinite groups we study (see Proposition \ref{prop1}, Corollary \ref{porisma} and Proposition \ref{Iprop1}).

\begin{Theorem}Let $R$ be a commutative ring of finite Gorenstein weak global dimension and $G$ be an $\textsc{\textbf{lh}}\mathfrak{X}$-group. Then:
	\begin{itemize}
		\item[(i)] For every (weak) Gorenstein flat $RG$-module $M$ and every $RG$-module $N$ which is flat as $R$-module, the $RG$-module $M\otimes_R N$ is Gorenstein flat.
		\item[(ii)] For every (weak) Gorenstein projective $RG$-module $M$ and every $RG$-module $N$ which is projective as $R$-module, the $RG$-module $M\otimes_R N$ is Gorenstein projective.
	\end{itemize}
\end{Theorem}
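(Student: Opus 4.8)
The plan is to manufacture, from a complete resolution of $M$, a complete resolution of the diagonal module $M\otimes_R N$, the only nontrivial point being that the Gorenstein acyclicity should not be destroyed by $-\otimes_R N$. Two elementary remarks handle the routine bookkeeping. First, $-\otimes_R N$ sends flat $RG$-modules to flat $RG$-modules when $N$ is flat over $R$, and projective $RG$-modules to projective $RG$-modules when $N$ is projective over $R$: this is clear for $RG\otimes_R N$ from the twisting isomorphism $RG\otimes_R N\cong RG\otimes_R(\mathrm{res}_R N)$ of $RG$-modules, the target carrying the $RG$-action on the first tensor factor alone, and the general case follows since a flat $RG$-module is a filtered colimit of finitely generated free ones while a projective one is a direct summand of a free one, and $-\otimes_R N$ commutes with colimits and with finite direct sums. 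Second, there is a natural ``flip'' isomorphism of complexes
\[
I\otimes_{RG}(\mathbf{C}\otimes_R N)\;\cong\;(I\otimes_R N)\otimes_{RG}\mathbf{C}
\]
for a right $RG$-module $I$, a complex $\mathbf{C}$ of left $RG$-modules and a left $RG$-module $N$, where $I\otimes_R N$ is equipped with the right $RG$-action $(i\otimes n)g=ig\otimes g^{-1}n$; this is the tensor counterpart of the adjunction $\mathrm{Hom}_{RG}(X\otimes_R N,-)\cong\mathrm{Hom}_{RG}(X,\mathrm{Hom}_R(N,-))$ for diagonal modules.

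Granting these, the quickest argument uses Theorem 1.1. Since every Gorenstein flat (respectively Gorenstein projective) $RG$-module is in particular weak Gorenstein flat (respectively weak Gorenstein projective), I may assume that $M$ lies in the weak class, so that $M$ is a cycle of an acyclic complex $\mathbf{F}$ of flat $RG$-modules in case (i), or of an acyclic complex $\mathbf{P}$ of projective $RG$-modules in case (ii). By the first remark $\mathbf{F}\otimes_R N$ (respectively $\mathbf{P}\otimes_R N$) is again a complex of flat (respectively projective) $RG$-modules; it is acyclic because $N$ is flat over $R$; and $M\otimes_R N$ is one of its cycles. Hence $M\otimes_R N$ is weak Gorenstein flat (respectively weak Gorenstein projective), and the equalities ${\tt WGFlat}(RG)={\tt GFlat}(RG)$ (respectively ${\tt WGProj}(RG)={\tt PGF}(RG)={\tt GProj}(RG)$) supplied by Theorem 1.1 finish the proof; this is Corollary \ref{porisma}.

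Since, in the development of the paper, Theorem 1.1 itself rests on the special case of the present statement in which $M$ is genuinely Gorenstein flat, respectively in ${\tt PGF}(RG)$ (Proposition \ref{prop1}), I would also prove that case directly. Take a complete resolution of $M$, namely an acyclic complex $\mathbf{F}$ of flat $RG$-modules (respectively $\mathbf{P}$ of projectives) with $M$ a cycle that stays acyclic after applying $I\otimes_{RG}-$ for every injective $RG$-module $I$. As before $\mathbf{F}\otimes_R N$ is an acyclic complex of flat $RG$-modules with $M\otimes_R N$ a cycle, and it remains only to check that $I\otimes_{RG}(\mathbf{F}\otimes_R N)$ is acyclic for every injective $RG$-module $I$; by the flip isomorphism this complex is $(I\otimes_R N)\otimes_{RG}\mathbf{F}$. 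Since $\mathbf{F}$ is a complete flat resolution, $W\otimes_{RG}\mathbf{F}$ is acyclic for every right $RG$-module $W$ of finite injective dimension — dimension shifting along a finite injective coresolution of $W$, tensored termwise with the flat complex $\mathbf{F}$ — so it suffices to control $I\otimes_R N$. This is where both hypotheses are used. Because $R$ has finite Gorenstein weak global dimension one has $\sfli R<\infty$, so the injective $R$-module $\mathrm{res}_R I$, and hence $I\otimes_R N$ obtained from it by tensoring with the $R$-flat module $N$, is of finite flat dimension over $R$; and the hypothesis that $G$ is an $\textsc{\textbf{lh}}\mathfrak{X}$-group promotes this $R$-level finiteness to the acyclicity of $(I\otimes_R N)\otimes_{RG}\mathbf{F}$ over $RG$, via a transfinite induction on the stages of Kropholler's hierarchy defining $\textsc{\textbf{lh}}\mathfrak{X}$: the base case $G\in\mathfrak{X}$ rests on the weak characteristic module of $G$ (flat over $R$ and of finite flat dimension over $RG$, with $B(G,R)$ serving in the principal examples), the successor step on the cellular chain complex of a finite-dimensional contractible $G$-CW-complex whose isotropy groups lie at an earlier stage, and the step to $\textsc{\textbf{lh}}$ on the detection of the relevant properties on finitely generated subgroups. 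Granting this, $M\otimes_R N$ is Gorenstein flat, and the projective case is word for word the same with $\mathbf{F}$ replaced by $\mathbf{P}$, yielding $M\otimes_R N\in{\tt PGF}(RG)={\tt GProj}(RG)$. I expect the last point — transporting the $R$-level finiteness of $I\otimes_R N$ to $RG$, equivalently the statement that the Gorenstein acyclicity of a complete resolution of $M$ survives the functor $-\otimes_R N$ — to be the only genuinely substantial step; everything else is formal manipulation of the diagonal action.
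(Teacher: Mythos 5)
Your proposal has a genuine circularity and leaves its own key step unproved. The reduction of the weak case to Theorem 1.1 is not available: in the paper's development, Theorem \ref{cora} (resp.\ Theorem \ref{Cora}) is obtained from Corollary \ref{theor1} (resp.\ Corollary \ref{Theo1}), and those corollaries are exactly the present statement applied to a \emph{weak} Gorenstein flat (resp.\ weak Gorenstein projective) module $M$ with $N=B(G,R)$; only Proposition \ref{theor2} (resp.\ Proposition \ref{Ppst}) gets by with the genuine case. So the equalities ${\tt WGFlat}(RG)={\tt GFlat}(RG)$ and ${\tt WGProj}(RG)={\tt PGF}(RG)={\tt GProj}(RG)$ cannot be invoked before the weak case of this statement is proved, and your direct argument does not supply it: it starts from a complex that is already totally acyclic, whereas a weak Gorenstein flat (or weak Gorenstein projective) module only comes with an acyclic complex of flats (projectives) — producing the total acyclicity is precisely what has to be proved. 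The same problem affects (ii) even in the ``genuine'' case: a Gorenstein projective $M$ has a complete resolution that is totally acyclic with respect to $\mathrm{Hom}_{RG}(-,Q)$, not with respect to $I\otimes_{RG}-$, so your direct argument only covers $M\in{\tt PGF}(RG)$, and ${\tt GProj}(RG)={\tt PGF}(RG)$ is again part of what is being established.

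Moreover, the step you yourself single out as the only substantial one — promoting the finiteness of the flat dimension of $I\otimes_R N$ over $R$ to the acyclicity of $(I\otimes_R N)\otimes_{RG}\mathbf{F}$ via the $\textsc{\textbf{lh}}\mathfrak{X}$-hierarchy — is only gestured at, and the sketched induction does not assemble as stated: acyclicity of this fixed complex of $RG$-modules is not a property that restricts along subgroups (the functor $-\otimes_{RG}\mathbf{F}$ is not compatible with restriction to $RH$), so the base/successor/lh pattern you describe has no mechanism to run on. The paper's induction operates instead on the module $M\otimes_R N$ itself: at the bottom of the hierarchy the finiteness of $\textrm{sfli}(RH)$ turns the merely acyclic complex $\mathbf{F}\otimes_R N$ into a totally acyclic one over $RH$; at successor stages the cellular chain complex and Lemma \ref{llem45} give $\textrm{Gfd}_{RH}(M\otimes_R N)\le r$ (resp.\ $\textrm{PGF-dim}_{RH}\le r$), and one needs the cosyzygy trick — $M$ is also a kernel arbitrarily far to the right in its acyclic complex — together with \cite[Theorem 2.8]{Ben} (resp.\ \cite[Proposition 2]{DE}) to pass from finite dimension back to membership in the class; and at the lh stage the flat case uses closure of ${\tt GFlat}$ under direct limits, while the PGF case requires a cardinality induction via \cite[Proposition 4.5]{St1}, since ${\tt PGF}$ is not known to be closed under direct limits. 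None of these ingredients appears in your sketch, so the crucial step remains missing.
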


\begin{Theorem}Let $R$ be a commutative ring of finite Gorenstein global dimension and $G$ be an $\textsc{\textbf{h}}\mathfrak{Y}$-group. Then, for every (weak) Gorenstein injective $RG$-module $M$ and every $RG$-module $N$ which is projective as $R$-module, the $RG$-module $\textrm{Hom}_{R}(N,M)$ is Gorenstein injective.
\end{Theorem}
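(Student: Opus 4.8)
The plan is to mirror the argument for the flat case (Proposition \ref{prop1}), with the functor $\mathrm{Hom}_R(N,-)$ playing the role of $-\otimes_R N$. Throughout I will use the tensor--Hom adjunction for the diagonal action,
\[
\mathrm{Hom}_{RG}\bigl(L\otimes_R N,\,M\bigr)\;\cong\;\mathrm{Hom}_{RG}\bigl(L,\,\mathrm{Hom}_R(N,M)\bigr),
\]
natural in $L$ and $M$, together with its immediate consequence: if $N$ is flat as an $R$-module and $I$ is an injective $RG$-module, then $\mathrm{Hom}_R(N,I)$ is an injective $RG$-module, because $L\mapsto L\otimes_R N$ is exact and $\mathrm{Hom}_{RG}(-,I)$ is exact, so $\mathrm{Hom}_{RG}(-,\mathrm{Hom}_R(N,I))$ is exact.

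First I would handle the hierarchy. Either one invokes the earlier fact that an $\textsc{\textbf{h}}\mathfrak Y$-group admits a characteristic module over $R$ (of $RG$-homological dimension controlled by the Gorenstein global dimension of $R$), or one runs a transfinite induction on the hierarchy length, the base case being $G\in\mathfrak Y$ and the inductive step being the standard argument using the cellular chain complex of the finite-dimensional contractible $G$-CW-complex witnessing each stage, which expresses $R$ as a finite complex of modules induced from the cell stabilizers. In either case, from now on fix a characteristic module $A$ for $G$ over $R$: an $RG$-module of finite homological dimension over $RG$ sitting in an $R$-split exact sequence relating it to the trivial module $R$.

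Now let $M$ be weak Gorenstein injective over $RG$, say $M$ is a cocycle of an acyclic complex $\mathbf I=(I^j)_{j\in\mathbb Z}$ of injective $RG$-modules (a complete injective resolution when $M$ is genuinely Gorenstein injective), and let $N$ be projective, hence flat, over $R$. Applying the exact functor $\mathrm{Hom}_R(N,-)$ yields an acyclic complex $\mathrm{Hom}_R(N,\mathbf I)$ of $RG$-modules with $\mathrm{Hom}_R(N,M)$ as a cocycle, whose terms $\mathrm{Hom}_R(N,I^j)$ are injective $RG$-modules by the consequence of the adjunction above; the same holds with $M$ replaced by any of its cosyzygies. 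So $\mathrm{Hom}_R(N,M)$ and all of its cosyzygies are weak Gorenstein injective, and what remains is to promote this to genuine Gorenstein injectivity, i.e. to show that $\mathrm{Hom}_R(N,\mathbf I)$ stays acyclic after $\mathrm{Hom}_{RG}(J,-)$ for every injective $RG$-module $J$. By the adjunction this amounts to acyclicity of $\mathrm{Hom}_{RG}(J\otimes_R N,\mathbf I)$, and for this it suffices to show that $J\otimes_R N$ has finite injective dimension over $RG$: since $\mathbf I$ is a complete injective resolution of the Gorenstein injective module $M$ and $\mathrm{Ext}^{\geq 1}_{RG}(-,M)$ vanishes on modules of finite injective dimension, a standard dimension count then gives the acyclicity. (If only weak Gorenstein injectivity of $M$ is assumed, one first carries this out for a genuine complete injective resolution of a high cosyzygy and transfers back along $\mathbf I$.) To bound the injective dimension of $J\otimes_R N$ I would tensor the $R$-split sequence defining $A$ over $R$ with $J\otimes_R N$ and combine the finite homological dimension of $A$ over $RG$ with the finiteness of the Gorenstein global dimension of $R$.

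The crux, and the main obstacle, is exactly this last bound. For an infinite group $G$ the $RG$-module $J\otimes_R N$ is in general \emph{not} injective (already $J\otimes_R RG$ with the diagonal action is an induced rather than a coinduced module), so there is genuine work in showing it has finite injective dimension; it is precisely here that the hypotheses that $R$ has finite Gorenstein global dimension and that $G$ is an $\textsc{\textbf{h}}\mathfrak Y$-group — hence carries a characteristic module of finite $RG$-homological dimension — are indispensable, and this step is the $\mathrm{Hom}$-analogue of the corresponding point in the proof of Proposition \ref{prop1}.
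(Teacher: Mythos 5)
Your argument reduces everything to fixing a characteristic module $A$ for $G$ itself, and that is where the proposal fails. An $\textsc{\textbf{h}}\mathfrak{Y}$-group need not admit a characteristic module over $R$: given $\textrm{spli}(R)<\infty$, the existence of a characteristic module for $G$ forces $\textrm{spli}(RG)<\infty$ (this is exactly \cite[Theorem 2.14]{St1}, used in the base case), whereas for example $G=\bigoplus_{n\in\mathbb{N}}\mathbb{Z}$ lies in $\textsc{\textbf{h}}\mathfrak{F}\subseteq\textsc{\textbf{h}}\mathfrak{Y}$ (it is countable with all finitely generated subgroups in $\textsc{\textbf{h}}\mathfrak{F}$, cf.\ \cite[Lemma 2.5]{JKL}) while $\textrm{spli}(\mathbb{Z}G)=\infty$, since $G$ contains $\mathbb{Z}^n$ for every $n$. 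So ``fix a characteristic module for $G$'' is legitimate only at the bottom of the hierarchy, $G\in\mathfrak{Y}$; there your argument does work, and in fact more easily than you indicate: $\textrm{pd}_{RG}J\leq\textrm{spli}(RG)<\infty$, tensoring a finite projective resolution of $J$ with the $R$-projective $N$ (diagonal action) gives $\textrm{pd}_{RG}(J\otimes_R N)<\infty$, and dimension shifting along the acyclic complex of injectives then gives the acyclicity of $\textrm{Hom}_{RG}(J\otimes_R N,\mathbf{I})$ even when $M$ is only weak Gorenstein injective. By contrast, your route through the finite \emph{injective} dimension of $J\otimes_R N$ needs the cosyzygies of $\mathbf{I}$ to be Gorenstein injective, and your parenthetical fix for the weak case is circular: an acyclic complex of injectives is not a complete injective resolution, and the equality ${\tt WGInj}(RG)={\tt GInj}(RG)$ (Theorem \ref{Iicora}) is a consequence of the very proposition being proved, so it cannot be assumed.

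What is genuinely missing is the inductive step up the hierarchy, which is where the hypothesis ``$G\in\textsc{\textbf{h}}\mathfrak{Y}$'' rather than ``$G\in\mathfrak{Y}$'' has to be used and where no characteristic module for $G$ (and no finiteness of $\textrm{spli}(RG)$) is available. In Proposition \ref{Iprop1} this step proceeds by applying $\textrm{Hom}_R(-,\textrm{Hom}_R(N,M))$ to the $R$-split permutation resolution $0\rightarrow K_r\rightarrow\cdots\rightarrow K_0\rightarrow R\rightarrow 0$ coming from the finite-dimensional contractible $G$-CW-complex, identifying the resulting terms as products $\prod\textrm{Coind}^G_{G'}(\textrm{Hom}_R(N,M))$ over stabilizers $G'$ lower in the hierarchy, and invoking the induction hypothesis together with Lemma \ref{Illem45} (coinduction preserves Gorenstein injectivity) and closure of ${\tt GInj}(RG)$ under products to get $\textrm{Gid}_{RG}(\textrm{Hom}_R(N,M))\leq r$; then, using that $M$ is a cosyzygy of an acyclic complex of injectives, one builds an exact sequence $0\rightarrow\textrm{Hom}_R(N,M')\rightarrow\textrm{Hom}_R(N,J_0)\rightarrow\cdots\rightarrow\textrm{Hom}_R(N,J_{r-1})\rightarrow\textrm{Hom}_R(N,M)\rightarrow 0$ with injective middle terms and $\textrm{Gid}_{RG}(\textrm{Hom}_R(N,M'))\leq r$, and concludes via \cite[Theorem 2.22]{H1} that $\textrm{Hom}_R(N,M)$ is actually Gorenstein injective. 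Your sketch offers no substitute for this mechanism (the sentence about the cellular chain complex is not followed up), so the proof is incomplete precisely at its essential point; the bound on the dimension of $J\otimes_R N$, which you single out as the crux, is in fact the easy part and only available in the base case.
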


Furthermore, we provide a flat analogue of \cite[Proposition 7.4]{St1} which generalizes \cite[Theorem C]{Kr2} (see Corollary \ref{cor226}).

\begin{Theorem}Let $R$ be a commutative ring of finite Gorenstein weak global dimension and $G$ be an $\textsc{\textbf{lh}}\mathfrak{F}$-group. Then, $\textrm{f.k}(RG)=\textrm{sfli}(RG)=\textrm{fin.f.dim}(RG)$.
\end{Theorem}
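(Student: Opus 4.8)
The plan is to establish the four inequalities
\begin{gather*}
\textrm{fin.f.dim}(RG)\le\textrm{f.k}(RG),\qquad \textrm{fin.f.dim}(RG)\le\textrm{sfli}(RG),\\
\textrm{sfli}(RG)\le\textrm{f.k}(RG),\qquad \textrm{f.k}(RG)\le\textrm{fin.f.dim}(RG),
\end{gather*}
with all values read in $\mathbb{N}\cup\{\infty\}$; together they yield the asserted equalities, so in particular no finiteness has to be proved separately. The inequality $\textrm{fin.f.dim}(RG)\le\textrm{f.k}(RG)$ holds because $RG$ is free over $RH$ for every subgroup $H$, so restriction sends flat $RG$-modules to flat $RH$-modules and hence $\textrm{fd}_{RH}(M|_H)\le\textrm{fd}_{RG}(M)$ for every $RG$-module $M$; thus every $RG$-module of finite flat dimension belongs to the class of modules whose flat dimension over $RG$ is measured by $\textrm{f.k}(RG)$. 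The inequality $\textrm{fin.f.dim}(RG)\le\textrm{sfli}(RG)$ is a general ring-theoretic fact: if $\textrm{sfli}(RG)=n<\infty$, then every module has Gorenstein flat dimension at most $n$, so the $n$-th syzygy of a module $M$ of finite flat dimension is a Gorenstein flat module of finite flat dimension, hence flat, and $\textrm{fd}_{RG}(M)\le n$.

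For the inequality $\textrm{sfli}(RG)\le\textrm{f.k}(RG)$ I would use coinduction. Every injective $RG$-module is a direct summand of a module $\textrm{Hom}_R(RG,E)$ with $E$ an injective $R$-module, so it is enough to bound $\textrm{fd}_{RG}\bigl(\textrm{Hom}_R(RG,E)\bigr)$. Decomposing $RG$ as a free right $RH$-module shows that, for a finite subgroup $H\le G$, the restriction $\textrm{Hom}_R(RG,E)|_H$ is a product of copies of $\textrm{Hom}_R(RH,E)$; as $RH$ is finitely generated free over $R$, each factor is coinduced over $RH$ from an injective $R$-module, hence injective over $RH$, and a product of injectives is injective, so $\textrm{Hom}_R(RG,E)|_H$ is an injective $RH$-module. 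Since $R$ has finite Gorenstein weak global dimension, $\textrm{sfli}(RH)<\infty$ for every finite subgroup $H$ --- this is the base case --- and therefore injective $RH$-modules have finite flat dimension over $RH$. Consequently $\textrm{Hom}_R(RG,E)$ lies in the class of modules defining $\textrm{f.k}(RG)$, and taking the supremum over all injective $RG$-modules gives $\textrm{sfli}(RG)\le\textrm{f.k}(RG)$.

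The inequality $\textrm{f.k}(RG)\le\textrm{fin.f.dim}(RG)$ is the heart of the statement: it amounts to showing that every $RG$-module $M$ with $\textrm{fd}_{RH}(M|_H)<\infty$ for all finite subgroups $H\le G$ already has $\textrm{fd}_{RG}(M)<\infty$ (and is then automatically $\le\textrm{fin.f.dim}(RG)$). The plan is to prove this by transfinite induction along the $\textsc{\textbf{lh}}\mathfrak{F}$-hierarchy, following the argument in the proof of \cite[Proposition 7.4]{St1} with "flat" everywhere in place of "projective" and using the characterizations of Gorenstein flat and $\textrm{PGF}$ $RG$-modules from Theorems \ref{cora} and \ref{Cora}, together with Proposition \ref{prop1}, in place of the corresponding statements for Gorenstein projective and cofibrant modules. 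The base case $\mathfrak{F}$ is trivial, since when $G$ is finite the hypothesis already includes $\textrm{fd}_{RG}(M)<\infty$. If $G$ lies in some layer of the hierarchy and acts on a finite-dimensional contractible $G$-CW-complex $X$ whose cell stabilizers lie in strictly lower layers, then the augmented cellular chain complex of $X$ is a finite resolution of $R$ by $RG$-modules that are direct sums of permutation modules $R[G/S]$, hence $R$-free; tensoring it over $R$ with $M$ produces a finite resolution of $M$ whose terms are direct sums of induced modules $\textrm{Ind}_S^G(M|_S)$, and since $\textrm{fd}_{RG}\bigl(\textrm{Ind}_S^G(M|_S)\bigr)=\textrm{fd}_{RS}(M|_S)$, which is finite by the inductive hypothesis, one obtains $\textrm{fd}_{RG}(M)\le\dim X+\sup_S\textrm{fd}_{RS}(M|_S)<\infty$. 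The passage between $G\in\textsc{\textbf{lh}}\mathfrak{F}$ and its finitely generated subgroups, which lie in $\textsc{\textbf{h}}\mathfrak{F}$, rests on the fact that flat dimension over $RG=\varinjlim RG_i$ is controlled by the flat dimensions over the $RG_i$, together with the module $B(G,R)$, which is $R$-free and behaves identically in the projective and flat settings.

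I expect this last, local step to be the main obstacle: the transfinite induction a priori only controls flat dimension over finitely generated subgroups, and one must upgrade finiteness over all of them to finiteness over $G$ itself. This is precisely where it matters that $G$ is an $\textsc{\textbf{lh}}\mathfrak{F}$-group (and not merely an $\textsc{\textbf{lh}}\mathfrak{X}$-group) and that $R$ has finite Gorenstein weak global dimension, and where the Gorenstein flat machinery developed earlier in the paper is brought to bear. Assembling the four inequalities gives $\textrm{f.k}(RG)=\textrm{sfli}(RG)=\textrm{fin.f.dim}(RG)$, and specialising $R$ and $G$ to the setting of \cite[Theorem C]{Kr2} recovers that result.
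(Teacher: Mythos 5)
Your reduction to a cycle of inequalities is the same skeleton as the paper's proof: $\textrm{fin.f.dim}(RG)\le\textrm{sfli}(RG)$ is there obtained from $RG\cong(RG)^{\textrm{op}}$ and \cite[Proposition 2.4(i)]{Emta}, and your syzygy argument is an acceptable substitute; your coinduction argument for $\textrm{sfli}(RG)\le\textrm{f.k}(RG)$ is essentially Lemma \ref{prop225}, except that you assert without any justification that $\textrm{sfli}(R)<\infty$ forces $\textrm{sfli}(RH)<\infty$ for every finite subgroup $H$. That fact is true but not formal: the paper deduces it from $\textrm{Ghd}_RH=0$ \cite[Proposition 3.6]{St1} together with $\textrm{sfli}(RH)\le\textrm{Ghd}_RH+\textrm{sfli}(R)$ \cite[Proposition 3.13]{St}, so you need to cite or prove something of this kind rather than call it ``the base case''.

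The genuine gap is in the key inequality $\textrm{f.k}(RG)\le\textrm{fin.f.dim}(RG)$ (Proposition \ref{prop224} in the paper), which your sketch leaves broken in two places. First, in the $\textsc{\textbf{h}}\mathfrak{F}$-induction each term of the length-$r$ resolution is a direct sum of modules $\textrm{Ind}^H_{S}\textrm{Res}^H_{S}M$ over possibly infinitely many stabilizers $S$, so knowing each $\textrm{fd}_{RS}M<\infty$ does not make $\sup_S\textrm{fd}_{RS}M$ finite; your bound $\textrm{fd}_{RG}M\le\dim X+\sup_S\textrm{fd}_{RS}(M|_S)<\infty$ is therefore unjustified as written. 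The repair is to assume $\textrm{fin.f.dim}(RG)=n<\infty$ (the inequality is trivial otherwise) and use $\textrm{fin.f.dim}(RH)\le\textrm{fin.f.dim}(RG)$ for all subgroups (Lemma \ref{l220}, proved by inducing a finite flat resolution and using that $M$ is a direct summand of $\textrm{Res}^G_H\textrm{Ind}^G_HM$), which turns mere finiteness into the uniform bound $n$ at every stage of the induction. Second, the local step that you flag as the unresolved ``main obstacle'' and propose to attack with $B(G,R)$ and the Gorenstein flat/PGF characterizations of Theorems \ref{cora} and \ref{Cora} needs none of that machinery, and it is unclear how it would enter: with $n=\textrm{fin.f.dim}(RG)$, the $n$-th syzygy $K_n$ of $M$ over $RG$ is flat over every finitely generated subgroup $G_{\lambda}\in\textsc{\textbf{h}}\mathfrak{F}$, and $K_n\cong{\lim\limits_{\longrightarrow}}_{\lambda}\textrm{Ind}^G_{G_{\lambda}}K_n$ is flat over $RG$ because induction preserves flatness and flat modules are closed under filtered colimits; hence $\textrm{fd}_{RG}M\le n$. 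This is exactly where the flat setting is easier, not harder, than the projective setting of \cite[Proposition 7.4]{St1}; indeed Proposition \ref{prop224} holds over an arbitrary commutative ring, and the Gorenstein hypotheses on $R$ are only needed for the other two inequalities.
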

This result is noteworthy, since it is not known whether all Gorenstein projective modules are Gorenstein flat over an arbitrary ring.

Finally, we determine the Gorenstein homological dimension ${{\textrm{Ghd}}_{R}G}$ of an $\textsc{\textbf{lh}}\mathfrak{F}$-group $G$ over any commutative ring $R$ of finite Gorenstein weak global dimension (see Theorem \ref{theo712})

\begin{Theorem}We have ${{\textrm{Ghd}}_{R}G}=\textrm{fd}_{RG} B(G,R)$ for every commutative ring of finite Gorenstein weak global dimension and every $\textsc{\textbf{lh}}\mathfrak{F}$-group $G$.
\end{Theorem}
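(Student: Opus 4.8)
The plan is to identify $\textrm{Ghd}_{R}G$ with $\textrm{Gfd}_{RG}B(G,R)$ and then to show that for the module $B(G,R)$ the Gorenstein flat dimension coincides with the ordinary flat dimension. Recall that $\textrm{Ghd}_{R}G=\textrm{Gfd}_{RG}R$, where $R$ carries the trivial action. I would start from N\"obeling's theorem: the abelian group $B(G,\mathbb{Z})$ is free, so $B(G,R)\cong B(G,\mathbb{Z})\otimes_{\mathbb{Z}}R$ is a free $R$-module. Hence tensoring a projective $RG$-resolution $P_{\bullet}\to R$ over $R$ with $B(G,R)$ yields a flat $RG$-resolution $P_{\bullet}\otimes_{R}B(G,R)\to B(G,R)$, so that $\Omega^{n}_{RG}B(G,R)\cong\Omega^{n}_{RG}(R)\otimes_{R}B(G,R)$ up to flat summands for every $n$. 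By Theorem 1.1(i), ${\tt GFlat}(RG)=\mathscr{X}_{B,{\tt GFlat}}$, so $\Omega^{n}_{RG}(R)$ is Gorenstein flat if and only if $\Omega^{n}_{RG}(R)\otimes_{R}B(G,R)$ is, i.e.\ if and only if $\Omega^{n}_{RG}B(G,R)$ is; therefore $\textrm{Gfd}_{RG}B(G,R)=\textrm{Gfd}_{RG}R=\textrm{Ghd}_{R}G$. Since $\textrm{Gfd}_{RG}B(G,R)\le\textrm{fd}_{RG}B(G,R)$ always holds, this already delivers the inequality $\textrm{Ghd}_{R}G\le\textrm{fd}_{RG}B(G,R)$.

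For the reverse inequality I must prove $\textrm{fd}_{RG}B(G,R)\le\textrm{Gfd}_{RG}B(G,R)$. As a Gorenstein flat module of finite flat dimension is flat, it is enough to show that $\textrm{Gfd}_{RG}B(G,R)<\infty$ forces $\textrm{fd}_{RG}B(G,R)<\infty$ (the case $\textrm{Gfd}_{RG}B(G,R)=\infty$ being vacuous). The essential input is that $B(G,R)$ is $\mathfrak{F}$-\emph{flat}, meaning that $B(G,R)|_{F}$ is a free $RF$-module for every finite subgroup $F\le G$: decomposing $G$ into its free left $F$-orbits and using $\textrm{Map}(F,R)\cong RF$ as $RF$-modules gives $B(G,R)|_{F}\cong RF\otimes_{R}B(F\backslash G,R)$ with $F$ acting on the left factor, and $B(F\backslash G,R)$ is $R$-free, again by N\"obeling. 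So the theorem reduces to the following lemma: \emph{if $G$ is an $\textsc{\textbf{lh}}\mathfrak{F}$-group, $R$ has finite Gorenstein weak global dimension, and $M$ is an $\mathfrak{F}$-flat $RG$-module with $\textrm{Gfd}_{RG}M<\infty$, then $\textrm{fd}_{RG}M<\infty$} — and then automatically $\textrm{fd}_{RG}M=\textrm{Gfd}_{RG}M$.

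I would prove the lemma by transfinite induction on the least ordinal $\alpha$ with $G\in\textsc{\textbf{h}}_{\alpha}\mathfrak{F}$, the passage from the $\textsc{\textbf{lh}}\mathfrak{F}$-case to finitely generated subgroups being standard and leaving the flat dimension finite. First replace $M$ by the syzygy $\Omega^{g}_{RG}M$ with $g=\textrm{Gfd}_{RG}M$: it is still $\mathfrak{F}$-flat (a projective $RG$-resolution restricts to a free $RF$-resolution, so syzygies of $\mathfrak{F}$-flat modules are $\mathfrak{F}$-flat) and is now Gorenstein flat, and it suffices to prove it is flat. The base case $G\in\mathfrak{F}$ is immediate, $\mathfrak{F}$-flat meaning flat over $RG$. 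For the successor step, a finite-dimensional contractible $G$-CW-complex with stabilizers in $\textsc{\textbf{h}}_{<\alpha}\mathfrak{F}$ provides a finite resolution of $R$ by permutation modules $\bigoplus R[G/H]$; tensoring it over $R$ with the $R$-flat module $M$ and invoking the projection formula $M\otimes_{R}R[G/H]\cong\textrm{Ind}_{H}^{G}(M|_{H})$ produces a finite resolution of $M$ by modules induced from subgroups $H\in\textsc{\textbf{h}}_{<\alpha}\mathfrak{F}$. Each $M|_{H}$ is Gorenstein flat over $RH$ (restriction along the inclusion $RH\hookrightarrow RG$, a free ring extension) and $\mathfrak{F}$-flat, hence flat over $RH$ by the inductive hypothesis; so each $\textrm{Ind}_{H}^{G}(M|_{H})$ is flat over $RG$, whence $\textrm{fd}_{RG}M<\infty$ and $M$ is flat. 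Limit stages are absorbed into the hierarchy. I expect the main obstacle to be the bookkeeping of this induction — in particular, confirming that Gorenstein flatness is preserved under restriction to subgroups and under the resolutions employed — but the techniques needed are already in place from Theorem 1.1 and from the arguments leading to Corollary \ref{cor226}. Finally, if $\textrm{Ghd}_{R}G=\infty$ then $\textrm{fd}_{RG}B(G,R)\ge\textrm{Gfd}_{RG}B(G,R)=\infty$ as well, so $\textrm{Ghd}_{R}G=\textrm{fd}_{RG}B(G,R)$ in every case.
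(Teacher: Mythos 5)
Your argument is correct, but it takes a genuinely different route from the paper's. The paper proves Theorem \ref{theo712} through the machinery of weak characteristic modules: using Corollary \ref{cor226} together with the inequality $\textrm{sfli}(RG)\leq \textrm{Ghd}_RG+\textrm{sfli}(R)$ from \cite{St}, it shows that $\textrm{Ghd}_RG<\infty$ if and only if $\textrm{fd}_{RG}B(G,R)<\infty$ (i.e.\ $B(G,R)$ is a weak characteristic module), and then invokes \cite[Corollary 3.7]{St} to upgrade this equivalence to the equality of dimensions. You instead obtain the equality $\textrm{Gfd}_{RG}B(G,R)=\textrm{Gfd}_{RG}R=\textrm{Ghd}_RG$ directly from Theorem \ref{cora}, applied to the syzygies of the diagonal resolution $\mathbf{P}\otimes_RB(G,R)$ (this uses both inclusions of that theorem plus the syzygy description of $\textrm{Gfd}$ over GF-closed rings, \cite[Theorem 2.8]{Ben}), and you transfer finiteness from $\textrm{Gfd}_{RG}B(G,R)$ to $\textrm{fd}_{RG}B(G,R)$ by a transfinite induction over the $\textsc{\textbf{h}}\mathfrak{F}$-hierarchy followed by the usual direct-limit step — in effect a Gorenstein-flat analogue of Proposition \ref{prop224}, with the fact ${\tt GFlat}(RG)\cap\overline{{\tt Flat}}(RG)={\tt Flat}(RG)$ (\cite[Lemma 2.4]{KS}) replacing the paper's appeal to $\textrm{f.k}(RG)=\textrm{sfli}(RG)=\textrm{fin.f.dim}(RG)$. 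Your route is more self-contained within this paper (it does not use the dimension formula of \cite[Corollary 3.7]{St} at all), while the paper's route is shorter because Corollary \ref{cor226} and the \cite{St} results absorb all the hierarchy bookkeeping at once. Two of your justifications should be tightened, though neither is a real gap: restriction of a Gorenstein flat module to a subgroup is not automatic from $RG$ being free over $RH$ (that only yields weak Gorenstein flatness), but the paper's Corollary 3.7, i.e.\ Theorem \ref{cora} applied to the subgroup, gives exactly what you need, as you anticipated; and the exactness of the tensored permutation resolution is best justified by its $R$-splitness (it is base-changed from a $\mathbb{Z}$-split complex), or by noting that the $R$-flatness you invoke does follow from your $\mathfrak{F}$-flatness hypothesis applied to the trivial subgroup.
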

The Gorenstein cohomological dimension $\textrm{Gcd}_{R}G$ of an $\textsc{\textbf{lh}}\mathfrak{F}$-group $G$ over a commutative ring $R$ of finite global dimension was studied in \cite[Theorem 3.1]{Bis2}, \cite[Theorem A.1]{ET2} and \cite[Theorem 7.6]{St1}.

The structure of the paper is as follows. In Section 2, we establish notation, terminology and preliminary results that will be used in the sequel. In Sections 3 and 4 we consider a commutative ring $R$ of finite Gorenstein weak global dimension and $\textsc{\textbf{lh}}\mathfrak{X}$-group $G$. We first prove Theorem 1.3 and then use the stability properties of the classes of Gorenstein flat and PGF modules (see Subsection 2.6) to obtain Theorems 1.1 and 1.2. We deduce that Gorenstein flatness and Gorenstein projectivity is closed under subgroups that are in $\textsc{\textbf{lh}}\mathfrak{X}$ (see Corollaries 3.7 and 4.11). 

In Section 5 we consider a commutative ring of finite Gorenstein global dimension and an $\textsc{\textbf{h}}\mathfrak{Y}$-group $G$. We first prove Theorem 1.5 and then use the stability property of the classes of Gorenstein flat and PGF modules (see Subsection 2.6) to obtain Theorem 1.3. We deduce that Gorenstein injectivity is closed under subgroups that are in $\textsc{\textbf{h}}\mathfrak{Y}$ (see Corollary 5.7). 

In our final section, we consider a commutative ring of finite Gorenstein weak global dimension and an $\textsc{\textbf{lh}}\mathfrak{F}$-group $G$. We first prove Theorem 1.6 and then use this result in the proof of Theorem 1.7. As a consequence, we obtain results about $\textsc{\textbf{lh}}\mathfrak{F}$-group of type FP$_{\infty}$ (see Corollaries 6.11, 6.12 and 6.13).

Some of our methods are inspired from methods already developed in the literature. More precisely, we apply ideas from the proofs of \cite[Theorem 3.5]{Kr1} \cite[Theorem C]{Kr2} \cite[Proposition 1.4]{Emtata} \cite[Propositions 2.5 and 2.25]{St1} \cite[Proposition 3.4]{St} in order to prove Propositions \ref{theor2}, \ref{Ppst} and \ref{Itheor2} (and also Remarks \ref{rem38}, \ref{rem46} and \ref{rem58}) and from the proofs of \cite[Theorem B]{DT} \cite[Theorem 1.2]{BS} in order to prove Propositions \ref{prop1}, \ref{Prop1} and \ref{Iprop1}. Useful tools in our proofs will also be the inequalities $\textrm{max}\{\textrm{Ghd}_R G,\sfli(R)\}\leq\textrm{sfli}(RG)\leq \textrm{Ghd}_R G + \textrm{sfli}(R)$ and $\textrm{max}\{\textrm{Gcd}_R G,\textrm{spli}(R)\}\leq\textrm{spli}(RG)\leq \textrm{Gcd}_R G + \textrm{spli}(R)$, proved in our recent work \cite{St,St1} (see \cite[Corollary 6.2]{St} and \cite[Corollary 2.18]{St1}, respectively).

\vspace{0.1in}
\noindent
{\em Conventions.}
All rings are assumed to be associative and unital and all ring homomorphisms will be unit preserving. Unless otherwise specified, all modules will be left $R$-modules. We will denote by $\textrm{Mod}(R)$ the category of $R$-modules and by ${\tt Proj}(R)$, ${\tt Flat}(R)$, ${\tt Inj}(R)$ the categories of projective, flat and injective $R$-modules, respectively.

\section{Preliminaries}
In this section, we collect certain notions and preliminary results that will be used in the sequel. 
\subsection{Gorenstein modules.}
An acyclic complex $\textbf{P}$ of projective modules is said to be a totally acyclic complex of projective modules if the complex of abelian groups $\mbox{Hom}_R(\textbf{P},Q)$
is acyclic for every projective module $Q$. Then, a module is Gorenstein 
projective if it is a syzygy of a complete projective resolution. We let ${\tt GProj}(R)$ 
be the class of Gorenstein projective modules. The Gorenstein projective dimension $\mbox{Gpd}_RM$ of a module $M$ is the length of a shortest resolution of $M$ by Gorenstein projective modules. If no such resolution of finite length exists, then we write $\mbox{Gpd}_RM = \infty$ (see \cite{H1}).

An acyclic complex $\textbf{F}$ of flat modules is said to be a totally acyclic complex of flat modules if the complex of abelian groups $I \otimes_R \textbf{F}$ is acyclic 
for every injective right module $I$. Then, a module is Gorenstein 
flat if it is a syzygy of a complete flat resolution. We let ${\tt GFlat}(R)$ 
be the class of Gorenstein flat modules. The Gorenstein flat dimension $\mbox{Gfd}_RM$ of a module $M$ is the length of a shortest resolution of $M$ by Gorenstein flat modules. If no such resolution of finite length exists, then we write $\mbox{Gfd}_RM = \infty$ (see \cite{Ben} and \cite[Corollary 4.12]{SS}).

An acyclic complex $\textbf{I}$ of injective modules is said to be a totally acyclic complex of injective modules if the complex of abelian groups $\textrm{Hom}_R(J,\textbf{I})$ is acyclic 
for every injective right module $I$. Then, a module is Gorenstein 
injective if it is a syzygy of a complete injective resolution. We let ${\tt GInj}(R)$ 
be the class of Gorenstein injective modules. The Gorenstein injective dimension $\mbox{Gid}_RM$ of a module $M$ is the length of a shortest (right) resolution of $M$ by Gorenstein injective modules. If no such resolution of finite length exists, then we write $\mbox{Gid}_RM = \infty$ (see \cite{H1}).

The projectively coresolved Gorenstein flat modules (PGF-modules, for short) were introduced by Saroch and Stovicek \cite{SS}. Such a module is a syzygy of an acyclic complex of projective modules $\textbf{P}$, which is such that the complex of abelian groups $I \otimes_R \textbf{P}$ is acyclic for every injective module $I$. It is clear that the class ${\tt PGF}(R)$ of PGF modules is contained in ${\tt GFlat}(R)$. The inclusion ${\tt PGF}(R) \subseteq {\tt GProj}(R)$ is proved in \cite[Theorem 4.4]{SS}. Moreover, the class of PGF $R$-modules is closed under extensions, direct sums, direct summands and kernels of epimorphisms. The PGF dimension $\mbox{PGF-dim}_RM$ of a module $M$ is the length of a shortest resolution of $M$ by PGF modules. If no such resolution of finite length exists, then we write $\mbox{PGF-dim}_RM = \infty$ (see \cite{DE}).

%Indeed, since the class ${\tt Flat}(RG)$ of flat $RG$-modules is closed under filtered colimits and direct sums, invoking the Govorov-Lazard theorem, we may assume that $M=RG$.
\subsection{Group rings.}Let $R$ be a commutative ring, $G$ be a group and consider the associated group ring $RG$. The standard reference for group cohomology is \cite{Br}. The anti-isomorphism of $RG$ which is induced by the map $g \rightarrow g^{-1}$, $g\in G$, enables us to view every right $RG$-module $M$ as a left $RG$-module $M$, and hence $RG\cong {(RG)}^{\textrm{op}}$. Using the diagonal action of the group $G$, the tensor product $M\otimes_R N$ of two $RG$-modules is also an $RG$-module using the diagonal action of $G$; we define $g\cdot (x \otimes y)=gx \otimes gy \in M\otimes_R N$ for every $g\in G$, $x\in M$ and $y\in N$. We note that for every projective $RG$-module $M$ and every $R$-projective $RG$-module $N$, the diagonal $RG$-module $M\otimes_R N$ is also projective. Similarly, for every flat $RG$-module $M$ and every $R$-flat $RG$-module $N$, the diagonal $RG$-module $M\otimes_R N$ is also flat.  We consider again two $RG$-modules $M,N$ and the $R$-module $\textrm{Hom}_R(M,N)$. Then, $G$ acts diagonally on $\textrm{Hom}_R(M,N)$; we define $(g\cdot f) (x)=gf(g^{-1}x)\in N$ for every $g\in G$, $f\in \textrm{Hom}_R(M,N)$ and $x\in M$. Moreover, for every $R$-projective $RG$-module $M$ and every injective $RG$-module $N$, the diagonal $RG$-module $\textrm{Hom}_R(M,N)$ is also injective.

\subsection{Gedrich-Gruenberg invariants and Gorenstein global dimensions}The invariants $\textrm{silp}(R)$, $\textrm{spli}(R)$ were defined by Gedrich and Gruenberg in \cite{GG} as the supremum of the injective lengths (dimensions) of projective modules and the supremum of the projective lengths (dimensions) of injective modules, respectively. The invariant $\textrm{sfli}(R)$ is defined similarly as the supremum of the flat lengths (dimensions) of injective modules. Since projective modules are flat, the inequality $\textrm{sfli}(R)\leq \textrm{spli}(R)$ is clear. Moreover, for every commutative ring $R$ we have the inequality $\textrm{silp}(R)\leq\textrm{spli}(R)$, with equality if $\textrm{spli}(R)<\infty$ (see \cite[Corollary 24]{DE}). Thus, for every commutative ring $R$, in view of \cite[Theorem 4.1]{Emm3}, the finiteness of $\textrm{spli}(R)$ is equivalent to the finiteness of $\textrm{Ggl.dim}R$, and then $\textrm{Ggl.dim}R=\textrm{spli}(R)$. Furthermore, for every commutative ring $R$, in view of \cite[Theorem 2.4]{CET}, the finiteness of $\textrm{sfli}(R)$ is equivalent to the finiteness of $\textrm{Gwgl.dim}R$, and then $\textrm{Gwgl.dim}R=\textrm{sfli}(R)$.

\subsection{(Weak) characteristic modules}Let $R$ be a commutative ring and $G$ be a group. We define a weak characteristic module for $G$ over $R$ as an $R$-flat $RG$-module $A$ with $\textrm{fd}_{RG}A <\infty$, which admits an $R$-pure $RG$-linear monomorphism $\jmath: R \rightarrow A$. We note that the existence of a weak characteristic module is equivalent to the existence of an $R$-projective $RG$-module $A'$ with $\textrm{fd}_{RG}A <\infty$, which admits an $R$-split $RG$-linear monomorphism $\jmath': R \rightarrow A'$ (see \cite[Theorem 5.10]{St}). If $\textrm{sfli}(R)<\infty$, the existence of a weak characteristic module for $G$ over $R$ is equivalent to the finiteness of $\textrm{sfli}(RG)$ (see \cite[Theorem 5.10]{St}). Weak characteristic modules generalize the notion of characteristic modules, i.e. an $R$-projective $RG$-module $\Lambda$ with $\textrm{pd}_{RG}A <\infty$, which admits an $R$-split $RG$-linear monomorphism $\iota: R \rightarrow \Lambda$. Characteristic modules were used to prove many properties of the Gorenstein cohomological dimension $\textrm{Gcd}_R G$ of a group $G$ (see \cite{BDT,Tal}). In \cite{St1,St}, (weak) characteristic modules where used to prove the inequalities $\textrm{sfli}(RG)\leq \textrm{Ghd}_R G + \textrm{sfli}(R)$ and $\textrm{spli}(RG)\leq \textrm{Gcd}_R G + \textrm{spli}(R)$.
\subsection{Large families of infinite groups}(see \cite{Kr,MS}) Let $\mathcal{X}$ be any class of groups. We define $\textsc{\textbf{h}}_0\mathcal{X}:=\mathcal{X}$, and for every ordinal number $\alpha>0$, we say that a group $G$ belongs to the class $\textsc{\textbf{h}}_{\alpha}\mathcal{X}$ if and only if there exists a finite dimensional contractible CW-complex on which $G$ acts such that every isotropy subgroup of the action belongs to $\textsc{\textbf{h}}_{\beta}\mathcal{X}$ for some ordinal $\beta < \alpha$. We say that a group $G$ belongs to $\textsc{\textbf{h}}\mathcal{X}$ if and 
only if there is an ordinal $\alpha$ such that $G$ belongs to $\textsc{\textbf{h}}_{\alpha}\mathcal{X}$. Moreover, we define a group $G$ to be in  $\textsc{\textbf{lh}}\mathcal{X}$ if and only if all finitely generated subgroups of $G$ are in $\textsc{\textbf{h}}\mathcal{X}$. Throughout this paper, we will denote by $\mathfrak{F}$ the class of finite groups, by $\mathfrak{X}$ the class of groups admitting a weak characteristic module and by $\mathfrak{Y}$ the subclass of $\mathfrak{X}$ consisting of groups admitting a characteristic module. All soluble groups, all groups of finite virtual cohomological dimension and all automorphism groups of Noetherian modules over a commutative ring are $\textsc{\textbf{lh}}\mathfrak{F}$-groups. The class $\textsc{\textbf{lh}}\mathfrak{F}$
is closed under extensions, ascending unions, free products with amalgamation and HNN
extensions.

A group $G$ is said to be of type $\Phi_R$ if it has the property that for every $RG$-module $M$, $\textrm{pd}_{RG}M<\infty$ if and only if $\textrm{pd}_{RH}M<\infty$ for every finite subgroup $H$ of $G$. These groups were defined over $\mathbb{Z}$ in \cite{Ta}. Over a commutative ring $R$ of finite global dimension, every group of finite virtual cohomological dimension and every group which acts on a tree with finite stabilizers is of type $\Phi_R$ (\cite[Corollary 2.6]{MS}). We denote by $\mathfrak{F}_{\phi}$ the class of groups of type $\Phi_R$.

Let $B(G,R)$ be the $RG$-module which consists of all functions from $G$ to $R$ whose image is a finite subset of $R$. The $RG$-module $B(G,R)$ is $R$-free and $RH$-free for every finite subgroup $H$ of $G$. For every element $\lambda \in R$, the constant function $\iota(\lambda)\in B(G,R)$ with value $\lambda$ is invariant under the action of $G$. The map $\iota: R \rightarrow B(G,R)$ which is defined in this way is then $RG$-linear and $R$-split. Indeed, for every fixed element $g\in G$, there exists an $R$-linear splitting for $\iota$ by evaluating functions at $g$. Moreover, the cokernel $\overline{B}(G,R)$ of $\iota$ is $R$-free (see \cite[Lemma 3.3]{Kr2} and \cite[Lemma 3.4]{BC}). We note that the $RG$-module $B(G,R)$ is a candidate for a (weak) characteristic module for any group $G$ over any commutative ring $R$. Furthermore, we have $\textrm{pd}_{RG}B(G,R)<\infty$ over any group $G$ of type $\Phi_R$ and hence $B(G,R)$ is a characteristic module for $G$. As every finite group is of type $\Phi_R$, it follows that $\mathfrak{F} \subseteq \mathfrak{F}_{\phi} \subseteq \mathfrak{Y} \subseteq \mathfrak{X}$ and hence the classes of groups $\mathfrak{F}_{\phi}, \textsc{\textbf{h}}\mathfrak{F}$ are contained in the class $\textsc{\textbf{h}}\mathfrak{Y}$ and the classes of groups $\mathfrak{F}_{\phi}, \textsc{\textbf{lh}}\mathfrak{F}$ are contained in the class $\textsc{\textbf{lh}}\mathfrak{X}$.

\begin{Remark}\rm Let $R$ be a commutative ring such that $\textrm{sfli}(R)<\infty$ and $G$ be a group such that $\textrm{pd}_{RG}B(G,R)<\infty$. Then, $\textrm{Gcd}_{R}G=\textrm{pd}_{RG}B(G,R)<\infty$.
	Indeed, the condition $\textrm{pd}_{RG}B(G,R)<\infty$ implies that $B(G,R)$ is a characteristic module for $G$ over $R$. Thus, the result follows from \cite[Corollary 2.9 and Proposition 2.21]{St1}. We note that this is a generalization of \cite[Theorem 1.18]{Bis2}.
\end{Remark}	

\subsection{Stabilities of Gorenstein modules} In our proofs we also make use of the stability properties of Gorenstein flat, PGF and Gorenstein injective modules established in \cite{BK}, \cite{KS} and \cite{Bouch}, respectively. 
\subsubsection{Stability of Gorenstein flat modules}(\cite[Theorem 1.2]{BK} and \cite[Corollary 4.12]{SS}) We denote by ${\tt GFlat}^{(2)}(R)$ (respectively, ${\tt GFlat}^{(2)}_{\mathcal{I}}(R)$ the class of $R$-modules $M$ which are syzygies in an acyclic complex $\mathbf{F}$ of Gorenstein flat modules such that the complex $H\otimes_R\mathbf{F}$ is acyclic for every Gorenstein injective $R$-module $H$ (respectively, the complex $I\otimes_R\mathbf{F}$ is acyclic for every injective $R$-module $I$). Then, ${\tt GFlat}(R)={\tt GFlat}^{(2)}(R)={\tt GFlat}^{(2)}_{\mathcal{I}}(R)$.
\subsubsection{Stability of PGF modules}(\cite[Theorem 5.7]{KS}) We denote by ${\tt PGF}^{(2)}(R)$ (respectively, ${\tt PGF}^{(2)}_{\mathcal{I}}(R)$) the class of $R$-modules $M$ which are syzygies in an acyclic complex $\mathbf{P}$ of PGF modules such that the complex $H\otimes_R\mathbf{P}$ is acyclic for every Gorenstein injective $R$-module $H$ (respectively, the complex $I\otimes_R\mathbf{P}$ is acyclic for every injective $R$-module $I$). Then, ${\tt PGF}(R)={\tt PGF}^{(2)}(R)={\tt PGF}^{(2)}_{\mathcal{I}}(R)$.
\subsubsection{Stability of Gorenstein injective modules}(\cite[Theorem 2.1]{Bouch}) We denote by ${\tt GInj}^{(2)}(R)$ (respectively, ${\tt GInj}^{(2)}_{\mathcal{I}}(R)$) the class of $R$-modules $M$ which are syzygies in an acyclic complex $\mathbf{J}$ of Gorenstein injective modules such that the complex $\textrm{Hom}_R(H,\mathbf{J})$ is acyclic for every Gorenstein injective $R$-module $H$ (respectively, the complex $\textrm{Hom}_R(I,\mathbf{J})$ is acyclic for every injective $R$-module $I$). Then, ${\tt GInj}(R)={\tt GInj}^{(2)}(R)={\tt GInj}^{(2)}_{\mathcal{I}}(R)$.

%(i) Let $R$ be a commutative ring such that $\textrm{sfli}R<\infty$ and $G$ be a group such that $\textrm{fd}_{RG}B(G,R)<\infty$. Then $\textrm{Ghd}_{R}G=\textrm{fd}_{RG}B(G,R)<\infty$. \rm	Indeed, the condition $\textrm{fd}_{RG}B(G,R)<\infty$ implies that $B(G,R)$ is a weak characteristic module for $G$ over $R$. Thus, the result follows from \cite[Corollary 5.13]{St}.
%\textit{(iii) Let $R$ be a commutative ring such that $\textrm{sfli}R<\infty$ and $G$ be a group of type $\Phi_R$. Then, $\textrm{Ghd}_{R}G=\textrm{fd}_{RG}B(G,R)<\infty$ and $\textrm{Gcd}_{R}G=\textrm{pd}_{RG}B(G,R)<\infty$.}Indeed, $\textrm{pd}_{RG}B(G,R)<\infty$, and hence $B(G,R)$ is a (weak) characteristic module for every group $G$ of type $\Phi_R$ over any commutative ring $R$. The result follows from (i) and (ii) above.

\subsection{Weak Gorenstein modules}Let $R$ be a commutative ring. We denote by ${\tt WGProj}(R)$ the class of modules which are syzygies of an acyclic complex of projective modules $\textbf{P}$. We note that ${\tt GProj}(R)\subseteq {\tt WGProj}(R)$ and ${\tt PGF}(R)\subseteq {\tt WGProj}(R)$. Since the finiteness of $\textrm{sfli}(R)$ yields ${\tt WGProj}(R)\subseteq {\tt PGF}(R)$, we have ${\tt WGProj}(R)={\tt PGF}(R)={\tt GProj}(R)$ (see \cite[Theorem 4.4]{SS}).

Analogously, we denote by ${\tt WGFlat}(R)$ the class of modules which are syzygies of an acyclic complex of flat modules $\textbf{F}$. We note that ${\tt GFlat}(R)\subseteq {\tt WGFlat}(R)$. Moreover, the finiteness of $\textrm{sfli}(R)$ implies that ${\tt WGFlat}(R)\subseteq {\tt GFlat}(R)$, and hence ${\tt WGFlat}(R)={\tt GFlat}(R)$.

Finally, we denote by ${\tt WGInj}(R)$ the class of modules which are syzygies of an acyclic complex of injective modules $\textbf{I}$. We note that ${\tt GInj}(R)\subseteq {\tt WGInj}(R)$. Moreover, the finiteness of $\textrm{spli}(R)$ implies that ${\tt WGInj}(R)\subseteq {\tt GInj}(R)$, and hence ${\tt WGInj}(R)={\tt GInj}(R)$.

\section{Gorenstein flat modules over $\textsc{\textbf{lh}}\mathfrak{X}$-groups} We consider a commutative ring $R$ such that $\textrm{sfli}(R)<\infty$ and an $\textsc{\textbf{lh}}\mathfrak{X}$-group $G$. Our goal in this section is to give a characterization of the class ${\tt GFlat}(RG)$, in terms of the $RG$-module $B(G,R)$. Moreover, under these conditions, we conclude that the class ${\tt GFlat}(RG)$ coincides with the class ${\tt WGFlat}(RG)$. 

\begin{Lemma}\label{llem45}	Let $R$ be a commutative ring, $G$ be a group and $H$ be a subgroup of $G$. Then, for every Gorenstein flat $RH$-module $M$, the $RG$-module $\textrm{Ind}^G_H M$ is also Gorenstein flat.
\end{Lemma}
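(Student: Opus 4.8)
The plan is to transport a witnessing totally acyclic complex of flat modules along the induction functor $\textrm{Ind}^G_H = RG\otimes_{RH}-$. Since $M$ is Gorenstein flat over $RH$, there is an acyclic complex $\mathbf{F}$ of flat $RH$-modules, having $M$ as one of its syzygies, such that $I\otimes_{RH}\mathbf{F}$ is acyclic for every injective right $RH$-module $I$. I would apply $RG\otimes_{RH}-$ to $\mathbf{F}$ and check that $RG\otimes_{RH}\mathbf{F}$ is a totally acyclic complex of flat $RG$-modules having $\textrm{Ind}^G_H M$ as a syzygy; the Gorenstein flatness of $\textrm{Ind}^G_H M$ then follows at once.

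First, $RG$ is free, hence flat, both as a left and as a right $RH$-module; therefore $\textrm{Ind}^G_H$ is exact, so $RG\otimes_{RH}\mathbf{F}$ is again an acyclic complex and $\textrm{Ind}^G_H M$ is one of its syzygies. Next, for every flat $RH$-module $F$ the $RG$-module $RG\otimes_{RH}F$ is flat: for any right $RG$-module $N$ there is a natural isomorphism $N\otimes_{RG}(RG\otimes_{RH}F)\cong(\textrm{Res}^G_H N)\otimes_{RH}F$, and the right-hand side is an exact functor of $N$ because restriction is exact and $F$ is flat. So $RG\otimes_{RH}\mathbf{F}$ is a complex of flat $RG$-modules.

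It then remains to verify the $\otimes$-total acyclicity: for every injective right $RG$-module $J$ the complex $J\otimes_{RG}(RG\otimes_{RH}\mathbf{F})$ must be acyclic. By the same natural isomorphism this complex is $(\textrm{Res}^G_H J)\otimes_{RH}\mathbf{F}$, so it suffices to know that $\textrm{Res}^G_H J$ is an injective right $RH$-module. This is the one point that needs a word of justification: the functor $\textrm{Res}^G_H$ on categories of right modules admits the left adjoint $-\otimes_{RH}RG$, this left adjoint is exact (again because $RG$ is flat as a left $RH$-module), and a functor admitting an exact left adjoint preserves injectives. Hence $\textrm{Res}^G_H J$ is injective over $RH$, and $(\textrm{Res}^G_H J)\otimes_{RH}\mathbf{F}$ is acyclic by total acyclicity of $\mathbf{F}$. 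Combining the three points, $RG\otimes_{RH}\mathbf{F}$ is a totally acyclic complex of flat $RG$-modules with $\textrm{Ind}^G_H M$ as a syzygy, so $\textrm{Ind}^G_H M\in{\tt GFlat}(RG)$. The only mild obstacle is the left/right-module bookkeeping in the tensor--hom adjunctions and the observation that restriction preserves injectivity; everything else is formal from the exactness of induction.
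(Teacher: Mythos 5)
Your proposal is correct and follows essentially the same route as the paper: induce the totally acyclic complex of flat modules via $RG\otimes_{RH}-$, note that induction is exact and preserves flatness since $RG$ is free over $RH$, and use the isomorphism $J\otimes_{RG}(RG\otimes_{RH}\mathbf{F})\cong(\textrm{Res}^G_H J)\otimes_{RH}\mathbf{F}$ together with the fact that restriction preserves injectives. The paper states these points more tersely (the injectivity of $\textrm{Res}^G_H J$ is asserted without the adjoint-functor justification you supply), but the argument is identical.
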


\begin{proof} Let $M$ be a Gorenstein flat $RH$-module. Then, there exists an acyclic complex of flat $RH$-modules $$\textbf{F}=\cdots \rightarrow F_{2}\rightarrow F_1\rightarrow F_0 \rightarrow F_{-1}\rightarrow \cdots$$ such that $M=\textrm{Im}(F_1 \rightarrow F_0)$ and the complex $I\otimes_{RH}\textbf{F}$ is exact, whenever $I$ is an injective $RH$-module. Thus, the induced complex $$\textrm{Ind}^G_H\textbf{F}=\cdots \rightarrow\textrm{Ind}^G_H F_2 \rightarrow\textrm{Ind}^G_H F_1\rightarrow\textrm{Ind}^G_H F_0 \rightarrow\textrm{Ind}^G_H F_{-1}\rightarrow \cdots$$ is an acyclic complex of flat $RG$-modules and has the $RG$-module $\textrm{Ind}^G_H M$ as syzygy. Since every injective $RG$-module $I$ is restricted to an injective $RH$-module, the isomorphism of complexes $I\otimes_{RG}\textrm{Ind}^G_H \textbf{F} \cong I\otimes_{RH}\textbf{F}$ implies that the $RG$-module $\textrm{Ind}^G_H M$ is Gorenstein flat.
\end{proof}

\begin{Proposition}\label{prop1}Let $R$ be a commutative ring such that $\textrm{sfli}(R)<\infty$ and $G$ be an $\textsc{\textbf{lh}}\mathfrak{X}$-group. Consider a weak Gorenstein flat $RG$-module $M$ and an $RG$-module $N$ which is flat as $R$-module. Then, $M\otimes_R N \in{\tt G Flat}(RG)$.
\end{Proposition}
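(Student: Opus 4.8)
The plan is to reduce to the case where $G$ is an $\textsc{\textbf{h}}\mathfrak{X}$-group (in fact to the case of finitely generated $G$) by a direct-limit/filtered-colimit argument, then induct along the hierarchy that defines $\textsc{\textbf{h}}\mathfrak{X}$. First I would observe that, since $M$ is weak Gorenstein flat, it sits as a syzygy of an acyclic complex $\mathbf{F}$ of flat $RG$-modules; tensoring over $R$ with the $R$-flat module $N$ (equipped with the diagonal $G$-action) keeps the complex acyclic and keeps each term flat as an $RG$-module, because for a flat $RG$-module $F$ and an $R$-flat $RG$-module $N$ the diagonal module $F\otimes_R N$ is flat over $RG$ (this is recorded in Subsection 2.2). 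Hence $M\otimes_R N\in{\tt WGFlat}(RG)$ automatically. So the real content is to promote this to genuine Gorenstein flatness, i.e. to show the complex $I\otimes_{RG}(\mathbf{F}\otimes_R N)$ is acyclic for every injective right $RG$-module $I$.

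The key step is therefore: \emph{for an $\textsc{\textbf{lh}}\mathfrak{X}$-group $G$ over a ring $R$ with $\sfli(R)<\infty$, every weak Gorenstein flat $RG$-module is Gorenstein flat}, applied to $M\otimes_R N$. I would prove this by transfinite induction on the ordinal $\alpha$ with $G\in\textsc{\textbf{h}}_\alpha\mathfrak{X}$, after first using that $\textsc{\textbf{lh}}\mathfrak{X}$ and the Gorenstein flat class behave well under filtered colimits of subgroups (Gorenstein flat modules are closed under direct limits, so a module that is weak Gorenstein flat over $RG$ and restricts to something Gorenstein flat over each $RG'$, $G'$ finitely generated, should be Gorenstein flat over $RG$ — this is where one invokes that $\textsc{\textbf{lh}}\mathfrak{X}$ is defined via finitely generated subgroups). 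For the base case $G\in\mathfrak{X}$: $G$ admits a weak characteristic module, equivalently (Subsection 2.4) $\sfli(RG)<\infty$, and then by Subsection 2.7 we have ${\tt WGFlat}(RG)={\tt GFlat}(RG)$ directly. For the inductive step, $G$ acts on a finite-dimensional contractible CW-complex $X$ with isotropy groups in $\textsc{\textbf{h}}_\beta\mathfrak{X}$, $\beta<\alpha$; the cellular chain complex of $X$ gives a finite-length resolution of the trivial $RG$-module $R$ by modules induced from the isotropy subgroups, and tensoring this over $R$ with a given weak Gorenstein flat module $L$ expresses $L$ (up to a finite filtration / a bounded complex argument) in terms of modules of the form $\mathrm{Ind}^G_{H}(\mathrm{Res}\,L)$ — here Lemma \ref{llem45} and its weak analogue let us push Gorenstein flatness up from the isotropy subgroups, for which the inductive hypothesis applies, to $G$. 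One then uses that ${\tt GFlat}(RG)$ is closed under extensions and under kernels of epimorphisms between its members (equivalently, is resolving enough) to conclude $L\in{\tt GFlat}(RG)$.

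The main obstacle I expect is the inductive step: controlling the total acyclicity condition (exactness after $I\otimes_{RG}-$ for all injective $I$) rather than just ordinary acyclicity, when passing from the isotropy subgroups to $G$ via the complex $C_*(X)\otimes_R L$. The induction functor $\mathrm{Ind}^G_H=RG\otimes_{RH}-$ interacts well with $I\otimes_{RG}-$ by the standard adjunction $I\otimes_{RG}\mathrm{Ind}^G_H(-)\cong \mathrm{Res}^G_H I\otimes_{RH}(-)$ together with the fact that restriction sends injectives to injectives, so the total-acyclicity test transfers cleanly one induced module at a time; the delicate part is assembling the finitely many induced pieces coming from $X$ into a single acyclic complex witnessing that $L\otimes_R B(G,R)$-type modules are Gorenstein flat, which is precisely the place where the finite dimensionality of $X$ and the stability results of Subsection 2.6.1 (identifying ${\tt GFlat}(R)$ with ${\tt GFlat}^{(2)}_{\mathcal I}(R)$) are needed to absorb the bounded complex of Gorenstein flat modules back into a single Gorenstein flat module. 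I would borrow the bookkeeping for this from the proofs of \cite[Theorem B]{DT} and \cite[Theorem 1.2]{BS}, as the authors indicate.
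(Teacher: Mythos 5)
Your overall route is essentially the paper's: transfinite induction along the $\textsc{\textbf{h}}_{\alpha}\mathfrak{X}$-hierarchy, base case ``weak characteristic module $\Rightarrow \sfli(RH)<\infty \Rightarrow {\tt WGFlat}(RH)={\tt GFlat}(RH)$'', inductive step via the finite permutation-module resolution coming from the contractible CW-complex together with Lemma \ref{llem45}, and a final passage from $\textsc{\textbf{h}}\mathfrak{X}$ to $\textsc{\textbf{lh}}\mathfrak{X}$ using closure of ${\tt GFlat}$ under direct limits. However, there is a genuine gap at the end of your inductive step. The permutation-module resolution tensored with the module $L=M\otimes_R N$ only produces an exact sequence $0\rightarrow K_r\rightarrow\cdots\rightarrow K_0\rightarrow L\rightarrow 0$ with each $K_i$ Gorenstein flat, i.e. it gives $\textrm{Gfd}_{RH}(L)\leq r$; closure of ${\tt GFlat}(RH)$ under extensions and kernels of epimorphisms cannot upgrade this to $L\in{\tt GFlat}(RH)$, since $L$ is the quotient, not a kernel, in that sequence (any non-flat module of finite flat dimension shows that finite Gorenstein flat dimension does not imply Gorenstein flatness). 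The missing idea --- exactly where weak Gorenstein flatness must be used a second time --- is to go forward along the acyclic complex of flats: extract $0\rightarrow M\rightarrow F_{r-1}\rightarrow\cdots\rightarrow F_0\rightarrow M'\rightarrow 0$ with the $F_i$ flat and $M'\in{\tt WGFlat}(RG)$, tensor with $N$, run the same permutation-module argument on $M'\otimes_R N$ to get $\textrm{Gfd}_{RH}(M'\otimes_R N)\leq r$, and then conclude that $M\otimes_R N$, being an $r$-th syzygy in a flat resolution of a module of Gorenstein flat dimension at most $r$, is Gorenstein flat (every ring is GF-closed; \cite[Theorem 2.8]{Ben}).

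Two smaller points. First, the stability theorem ${\tt GFlat}(R)={\tt GFlat}^{(2)}_{\mathcal{I}}(R)$ and the module $B(G,R)$ play no role in this proposition; they are needed only for the converse inclusion $\mathscr{X}_{B,{\tt GFlat}}\subseteq{\tt GFlat}(RG)$ in Proposition \ref{theor2}, so invoking them here conflates the two arguments. Second, in the $\textsc{\textbf{lh}}$-step, knowing that $M\otimes_R N$ restricts to a Gorenstein flat module over every finitely generated subgroup does not by itself give Gorenstein flatness over $RG$: one needs the presentation $M\otimes_R N\cong{\lim\limits_{\longrightarrow}}_{\lambda}\textrm{Ind}^G_{G_{\lambda}}(M\otimes_R N)$ together with Lemma \ref{llem45} applied to each induced piece before invoking closure under direct limits.
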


\begin{proof}Let $M\in{\tt WGFlat}(RG)$ and $N\in{\tt Flat}(R)$. We will first show that $M\otimes_R N$ is Gorenstein flat as $RH$-module over any $\textsc{\textbf{h}}\mathfrak{X}$-subgroup $H$ of $G$. We use transfinite induction on the ordinal number $\alpha$, which is such that $H\in \textsc{\textbf{h}}_{\alpha}\mathfrak{X}$. If $\alpha=0$, then $H$ admits a weak characteristic module, and hence \cite[Theorem 3.14]{St} yields $\textrm{sfli}(RH) <\infty$. Since $M$ is a weak Gorenstein flat $RG$-module, it follows that $M$ is also a weak Gorenstein flat $RH$-module and hence there exists an acyclic complex of flat $RH$-modules $$\textbf{F}=\cdots \rightarrow F_{2}\rightarrow F_1\rightarrow F_0 \rightarrow F_{-1}\rightarrow \cdots$$ such that $M=\textrm{Im}(F_1 \rightarrow F_0)$. As $N$ is $R$-flat, we obtain the induced complex of $RH$-flat modules (with diagonal action) $$\textbf{F}\otimes_R N = \cdots \rightarrow F_{2}\otimes_R N\rightarrow F_1\otimes_R N\rightarrow F_0\otimes_R N \rightarrow F_{-1}\otimes_R N\rightarrow \cdots,$$ where $M\otimes_R N= \textrm{Im}(F_1\otimes_R N \rightarrow F_0\otimes_R N)$. Then, the finiteness of $\textrm{sfli}(RH)$ implies that the complex $I\otimes_{RH}(\textbf{F}\otimes_R N)$ is acyclic for every injective $RH$-module $I$. We conclude that the $RH$-module $M\otimes_R N$ is Gorenstein flat.  
	
	Now we assume that $M\otimes_R N$ is Gorenstein flat as $RH'$-module for every $\textsc{\textbf{h}}_{\beta}\mathfrak{X}$-subgroup $H'$ of $G$ and every $\beta<\alpha$. Let $H$ be an $\textsc{\textbf{h}}_{\alpha}\mathfrak{X}$-subgroup of $G$. Then, there
	exists a $\mathbb{Z}$-split exact sequence of $\mathbb{Z}H$-modules $$0\rightarrow C_r \rightarrow \cdots \rightarrow C_1 \rightarrow C_0 \rightarrow \mathbb{Z} \rightarrow 0,$$	where each $C_i$
	is a direct sum of permutation $\mathbb{Z}H$-modules of the form $\mathbb{Z}[H/H']$, with $H'$ an $\textsc{\textbf{h}}_{\beta}\mathfrak{X}$-subgroup of $H$ for some $\beta<\alpha$. We note that the integer $r$ is the dimension of the $H$-CW-complex provided by the definition of $H$ being an $\textsc{\textbf{h}}_{\alpha}\mathfrak{X}$-group. The above exact sequence yields an exact sequence of $RH$-modules of the form
	\begin{equation}\label{eqqqq}
		0\rightarrow K_r \rightarrow \cdots \rightarrow K_1 \rightarrow K_0 \rightarrow M\otimes_R N \rightarrow 0
	\end{equation}
	such that every $K_i$ is a direct sum of modules of the form ${\textrm{Ind}^H_{H'}}{\textrm{Res}^H_{H'}} (M \otimes_R N)$, where $H'\in \textsc{\textbf{h}}_{\beta}\mathfrak{X}$, $\beta<\alpha$ (see also \cite[Lemma 2.3]{Bis2}).
	Our induction hypothesis implies that ${\textrm{Res}^H_{H'}} (M \otimes_R N)$ is a Gorenstein flat $RH'$-module. Invoking Lemma \ref{llem45}, we infer that ${\textrm{Ind}^H_{H'}}{\textrm{Res}^H_{H'}}(M \otimes_R N)$ is a Gorenstein flat $RH$-module. Since the class ${\tt GFlat}(RH)$ is closed under direct sums, we obtain that the $RH$-module $K_i$ is Gorenstein flat, for every $i=0,\dots r$. Thus, the exact sequence (\ref{eqqqq}) yields $\textrm{Gfd}_{RH}(M\otimes_R N)\leq r$. Moreover, $M\in {\tt WGFlat}(RG)$, and hence there exists an exact sequence of $RG$-modules of the form $$0\rightarrow M \rightarrow F_{r-1} \rightarrow \cdots \rightarrow F_1 \rightarrow F_0 \rightarrow M' \rightarrow 0,$$ where $F_i$ is flat for every $i=0,1,\dots,r-1$ and $M'\in {\tt WGFlat}(RG)$. Since $N$ is $R$-flat, we obtain the induced exact sequence of $RG$-modules (with diagonal action)
	\begin{equation}\label{eqqq}
		0\rightarrow M \otimes_R N\rightarrow F_{r-1}\otimes_R N \rightarrow \cdots \rightarrow F_0\otimes_R N \rightarrow M'\otimes_R N \rightarrow 0,
	\end{equation}where $F_i \otimes_R N$ is a flat $RG$-module (and hence is flat as $RH$-module) for every $i=0,1,\dots,r-1$. The same argument as above for the $RG$-module $M' \in {\tt WGflat}(RG)$ yields $\textrm{Gfd}_{RH}(M'\otimes_R N)\leq r$. Since every ring is ${\tt GF}$-closed, using \cite[Theorem 2.8]{Ben}, we conclude that $M\otimes_R N$ is a Gorenstein flat $RH$-module.
	
	Let $G$ be an $\textsc{\textbf{lh}}\mathfrak{X}$-group. Then, $G$ can be expressed as the filtered union of its finitely generated subgroups $(G_{\lambda})_{\lambda}$, which are all contained in $\textsc{\textbf{h}}\mathfrak{X}$. Since $G_{\lambda}\in \textsc{\textbf{h}}\mathfrak{X}$, the $RG_{\lambda}$-module $M\otimes_R N$ is Gorenstein flat. Invoking Lemma \ref{llem45}, we obtain that the $RG$-module $\textrm{Ind}^G_{G_{\lambda}}(M\otimes_R N)$ is Gorenstein flat as well. Thus, the $RG$-module $M\otimes_R N\cong  {\lim\limits_{\longrightarrow}}_{\lambda}\textrm{Ind}^G_{G_{\lambda}}(M\otimes_R N)$ is Gorenstein flat as a direct limit of Gorenstein flat modules (see \cite[Corollary 4.12]{SS}).\end{proof}

\begin{Definition}Let $R$ be a commutative ring and $G$ be a group. We denote by $\mathscr{X}_{B,{\tt GFlat}}$ the class of $RG$-modules $\mathscr{X}_{B,{\tt GFlat}}=\{M\in \textrm{Mod}(RG): \, M\otimes_R B(G,R)\in {\tt GFlat}(RG)\}$.
\end{Definition}

\begin{Corollary}\label{theor1}Let $R$ be a commutative ring such that $\textrm{sfli}(R)<\infty$ and $G$ be an $\textsc{\textbf{lh}}\mathfrak{X}$-group. Then, ${\tt WG Flat}(RG)\subseteq\mathscr{X}_{B,{\tt GFlat}}$.
\end{Corollary}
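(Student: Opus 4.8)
The plan is to obtain the statement as an immediate consequence of Proposition \ref{prop1}. The only preliminary observation needed is that $B(G,R)$ is an admissible choice for the second module appearing there: by the discussion in Subsection 2.5 the $RG$-module $B(G,R)$ is $R$-free, hence in particular flat as an $R$-module. So $N=B(G,R)$ is an $RG$-module which is flat as an $R$-module, exactly as required by the hypotheses of Proposition \ref{prop1}.

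Concretely, I would fix an arbitrary $M\in{\tt WGFlat}(RG)$. The standing assumptions of the corollary, namely $\textrm{sfli}(R)<\infty$ and $G\in\textsc{\textbf{lh}}\mathfrak{X}$, coincide with those of Proposition \ref{prop1}, so applying that proposition with $N=B(G,R)$ yields $M\otimes_R B(G,R)\in{\tt GFlat}(RG)$. By the definition of the class $\mathscr{X}_{B,{\tt GFlat}}$, this is precisely the assertion $M\in\mathscr{X}_{B,{\tt GFlat}}$. Since $M\in{\tt WGFlat}(RG)$ was arbitrary, we conclude ${\tt WGFlat}(RG)\subseteq\mathscr{X}_{B,{\tt GFlat}}$.

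I do not expect any obstacle here: all the substantive work — the transfinite induction over the $\textsc{\textbf{h}}_\alpha\mathfrak{X}$-filtration, the use of induction/restriction together with Lemma \ref{llem45}, and passage to the filtered colimit of finitely generated subgroups — has already been carried out in the proof of Proposition \ref{prop1}, and the present corollary is just the specialization $N=B(G,R)$ plus the elementary fact that $B(G,R)$ is $R$-flat. (The reverse inclusion, needed for the full equality in Theorem 1.1(i), is a separate matter and will require the additional structural input coming from the map $\iota\colon R\to B(G,R)$ and the stability results of Subsection 2.6.)
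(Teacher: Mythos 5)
Your argument is exactly the paper's proof: the corollary follows by applying Proposition \ref{prop1} with $N=B(G,R)$, which is admissible since $B(G,R)$ is $R$-free and hence $R$-flat. No gaps; the specialization is immediate, just as in the paper.
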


\begin{proof}Since the $RG$-module $B(G,R)$ is $R$-free, this is an immediate consequence of Proposition \ref{prop1}.
\end{proof}

%\begin{Remark}\rm The existence of a weak characteristic module Proposition \ref{ppst} may be replaced with the assumption that $G$ is an $\textsc{\textbf{lh}}\mathfrak{F}$-group. \end{Remark}

\begin{Proposition}\label{theor2}Let $R$ be a commutative ring such that $\textrm{sfli}(R)<\infty$ and $G$ be an $\textsc{\textbf{lh}}\mathfrak{X}$-group. Then, $\mathscr{X}_{B,{\tt GFlat}}\subseteq{\tt GFlat}(RG)$.
\end{Proposition}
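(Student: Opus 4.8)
The plan is to reduce the statement to Proposition~\ref{prop1} by a transfinite induction along Kropholler's hierarchy, in the spirit of the proof of Proposition~\ref{prop1}. Write $B=B(G,R)$ and let $\overline{B}=\overline{B}(G,R)$ be the cokernel of the $R$-split $RG$-linear monomorphism $\iota\colon R\to B$, so that $\overline{B}$ is $R$-free; hence, for any $R$-module $X$, applying $X\otimes_R-$ to the sequence $0\to R\to B\to\overline{B}\to 0$ produces an $R$-split (in particular exact) sequence of $RG$-modules. Fix $N\in\mathscr{X}_{B,{\tt GFlat}}$, so that $N\otimes_R B\in{\tt GFlat}(RG)\subseteq{\tt WGFlat}(RG)$. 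Since $\overline{B}^{\otimes k}$ is $R$-free, hence $R$-flat, Proposition~\ref{prop1} applied to the weak Gorenstein flat $RG$-module $N\otimes_R B$ and to the $RG$-module $\overline{B}^{\otimes k}$ yields
\[
N\otimes_R\overline{B}^{\otimes k}\otimes_R B\;\cong\;(N\otimes_R B)\otimes_R\overline{B}^{\otimes k}\;\in\;{\tt GFlat}(RG)\qquad(k\ge 0);
\]
in particular $N\otimes_R\overline{B}^{\otimes k}\in\mathscr{X}_{B,{\tt GFlat}}$ for every $k\ge 0$. Moreover, applying $N\otimes_R\overline{B}^{\otimes k}\otimes_R-$ to $0\to R\to B\to\overline{B}\to 0$ gives, for each $k$, a short exact sequence of $RG$-modules
\[
0\longrightarrow N\otimes_R\overline{B}^{\otimes k}\longrightarrow N\otimes_R\overline{B}^{\otimes k}\otimes_R B\longrightarrow N\otimes_R\overline{B}^{\otimes(k+1)}\longrightarrow 0
\]
whose middle term is Gorenstein flat over $RG$, and hence over every subgroup ring $RH$.

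Next I would prove, by transfinite induction on the ordinal $\alpha$, the statement: if $H\le G$ lies in $\textsc{\textbf{h}}_{\alpha}\mathfrak{X}$, then $N$ is Gorenstein flat as an $RH$-module for every $N\in\mathscr{X}_{B,{\tt GFlat}}$. For $\alpha=0$ the group $H$ admits a weak characteristic module, so $\textrm{sfli}(RH)<\infty$ by \cite[Theorem 3.14]{St}, and therefore there is an integer $m$ with $\textrm{Gfd}_{RH}L\le m$ for every $RH$-module $L$ (see \cite{CET} and \cite{Ben}). Putting $a_k=\textrm{Gfd}_{RH}(N\otimes_R\overline{B}^{\otimes k})$, the short exact sequences above, together with the standard inequality $\textrm{Gfd}_{RH}A\le\max\{\textrm{Gfd}_{RH}B,\textrm{Gfd}_{RH}C-1\}$ for $0\to A\to B\to C\to 0$ (valid over any ring) and the fact that each middle term is Gorenstein flat over $RH$, give $a_k\le\max\{0,a_{k+1}-1\}$; since $a_k\le m$ for all $k$, a descending induction shows $a_0\le\max\{0,a_m-m\}=0$, i.e.\ $N\in{\tt GFlat}(RH)$. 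For $\alpha>0$, exactly as in the proof of Proposition~\ref{prop1}, the $\textsc{\textbf{h}}_{\alpha}\mathfrak{X}$-structure of $H$ provides, for each $k$, an exact sequence $0\to K^{(k)}_r\to\cdots\to K^{(k)}_0\to N\otimes_R\overline{B}^{\otimes k}\to 0$ of $RH$-modules in which every $K^{(k)}_i$ is a direct sum of modules of the form $\textrm{Ind}^{H}_{H'}\textrm{Res}^{H}_{H'}(N\otimes_R\overline{B}^{\otimes k})$ with $H'\in\textsc{\textbf{h}}_{\beta}\mathfrak{X}$, $\beta<\alpha$ (see \cite[Lemma 2.3]{Bis2}). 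Since $N\otimes_R\overline{B}^{\otimes k}\in\mathscr{X}_{B,{\tt GFlat}}$, the induction hypothesis applied to this $RG$-module shows that $\textrm{Res}^{H}_{H'}(N\otimes_R\overline{B}^{\otimes k})$ is Gorenstein flat over $RH'$, whence by Lemma~\ref{llem45} each $K^{(k)}_i$ is Gorenstein flat over $RH$; thus $a_k=\textrm{Gfd}_{RH}(N\otimes_R\overline{B}^{\otimes k})\le r$ for all $k$, and feeding this uniform bound into the same recursion $a_k\le\max\{0,a_{k+1}-1\}$ again yields $a_0=0$.

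Finally, I would pass from $\textsc{\textbf{h}}\mathfrak{X}$-subgroups to $G$ itself: writing an $\textsc{\textbf{lh}}\mathfrak{X}$-group $G$ as the filtered union of its finitely generated subgroups $(G_{\lambda})_{\lambda}$, each of which lies in $\textsc{\textbf{h}}\mathfrak{X}$, the previous step gives that every $M\in\mathscr{X}_{B,{\tt GFlat}}$ is Gorenstein flat as an $RG_{\lambda}$-module, so by Lemma~\ref{llem45} the $RG$-module $\textrm{Ind}^{G}_{G_{\lambda}}(\textrm{Res}^{G}_{G_{\lambda}}M)$ is Gorenstein flat; since $M\cong\varinjlim_{\lambda}\textrm{Ind}^{G}_{G_{\lambda}}(\textrm{Res}^{G}_{G_{\lambda}}M)$, it follows that $M$ is Gorenstein flat over $RG$, being a direct limit of Gorenstein flat modules (see \cite[Corollary 4.12]{SS}). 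This establishes $\mathscr{X}_{B,{\tt GFlat}}\subseteq{\tt GFlat}(RG)$.

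I expect the main difficulty to be the passage from the finite Gorenstein flat dimension bounds over $RH$ — coming from $\textrm{sfli}(RH)<\infty$ in the base case, and from Kropholler's hierarchy in the inductive step — to the actual vanishing $\textrm{Gfd}_{RH}N=0$. The device that makes this work is that Proposition~\ref{prop1} forces every twist $N\otimes_R\overline{B}^{\otimes k}\otimes_R B$ to be genuinely Gorenstein flat, so that the modules $N\otimes_R\overline{B}^{\otimes k}$ satisfy a recursion that collapses their common finite bound down to $0$. One must also take care that the relevant homological inequalities are used over the subgroup rings $RH$ with $H\in\textsc{\textbf{h}}\mathfrak{X}$ (where Gorenstein flat dimensions are finite) rather than over $RG$, since $\textrm{sfli}(RG)$ may be infinite for a general $\textsc{\textbf{lh}}\mathfrak{X}$-group.
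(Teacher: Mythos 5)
Your overall strategy is sound and genuinely different from the paper's: the paper splices the coresolution by the modules $M\otimes_R V_i$ with a flat $RG$-resolution of $M$ and verifies, after tensoring with $B$, that the resulting complex of Gorenstein flat modules stays acyclic under $I\otimes_{RG}\_\!\_$, concluding via the stability theorem \cite[Theorem 1.2]{BK}; you instead avoid stability altogether and run a second transfinite induction along the hierarchy, collapsing a uniform bound on $a_k=\textrm{Gfd}_{RH}(N\otimes_R\overline{B}^{\otimes k})$ through the short exact sequences $0\to N\otimes_R\overline{B}^{\otimes k}\to N\otimes_R\overline{B}^{\otimes k}\otimes_R B\to N\otimes_R\overline{B}^{\otimes(k+1)}\to 0$. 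However, there is one genuine gap: the assertion that the middle terms are ``Gorenstein flat over $RG$, and hence over every subgroup ring $RH$''. Restriction of Gorenstein flat modules along $RG\to RH$ is not a known general fact; indeed, closure of Gorenstein flatness under $\textsc{\textbf{lh}}\mathfrak{X}$-subgroups is exactly what the paper deduces (Corollary 3.7) as a \emph{consequence} of Theorem \ref{cora}, so you cannot quote it here. In the base case $\alpha=0$ the claim is harmless, since there $\textrm{sfli}(RH)<\infty$, the restriction of a Gorenstein flat $RG$-module lies in ${\tt WGFlat}(RH)$, and ${\tt WGFlat}(RH)={\tt GFlat}(RH)$; but in the inductive step $\textrm{sfli}(RH)$ may well be infinite, and without the middle terms being Gorenstein flat over $RH$ your recursion $a_k\le\max\{0,a_{k+1}-1\}$ has no justification, so the bound $a_k\le r$ never collapses to $0$.

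The gap is repairable with tools you already use. The class $\mathfrak{X}$ is closed under subgroups (a weak characteristic module for $K$ restricts to one for any $S\le K$), hence $\textsc{\textbf{h}}\mathfrak{X}$ is subgroup-closed and every $H\in\textsc{\textbf{h}}_{\alpha}\mathfrak{X}$ is itself an $\textsc{\textbf{lh}}\mathfrak{X}$-group. Since $\textrm{Res}^G_H(N\otimes_R B)$ is a syzygy of an acyclic complex of flat $RH$-modules, i.e. lies in ${\tt WGFlat}(RH)$, Proposition \ref{prop1} applied to the group $H$, this module and the $R$-flat module $\overline{B}^{\otimes k}$ yields that $N\otimes_R\overline{B}^{\otimes k}\otimes_R B\cong(N\otimes_R B)\otimes_R\overline{B}^{\otimes k}$ is Gorenstein flat over $RH$, which is precisely what the recursion needs. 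With that emendation the argument goes through; two smaller points worth making explicit are that the inequality $\textrm{Gfd}(A)\le\max\{\textrm{Gfd}(B),\textrm{Gfd}(C)-1\}$ rests on every ring being GF-closed (\cite[Corollary 4.12]{SS} together with \cite{Ben}), and that the uniform bound in the base case uses $\textrm{Gwgl.dim}(RH)=\textrm{sfli}(RH)<\infty$ for the (noncommutative) ring $RH$, which requires $RH\cong(RH)^{\mathrm{op}}$ and \cite{CET}.
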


\begin{proof}Let $B=B(G,R)$, $\overline{B}=\overline{B}(G,R)$ and consider an $RG$-module $M$ such that the $RG$-module $M\otimes_R B$ is Gorenstein flat. We also let $V_i=\overline{B}^{\otimes i}\otimes_R B$ for every $i\geq 0$, where $\overline{B}^{\otimes 0}=R$. Since the short exact sequence of $RG$-modules $0\rightarrow R \rightarrow B \rightarrow \overline{B}\rightarrow 0$ is $R$-split, we obtain for every $i\geq 0$ a short exact sequence of $RG$-modules of the form $$0\rightarrow M\otimes_R\overline{B}^{\otimes i}\rightarrow M\otimes_R V_i \rightarrow M\otimes_R \overline{B}^{\otimes i+1}\rightarrow 0.$$ Then, the splicing of the above short exact sequences for every $i\geq 0$ yields an exact sequence of the form
	\begin{equation}\label{eq1}
		0\rightarrow M \xrightarrow{\alpha} M\otimes_R V_0 \rightarrow M\otimes_R V_1 \rightarrow M\otimes_R V_2 \rightarrow \cdots.
	\end{equation} 
	Since the $RG$-module $M\otimes_R B$ is Gorenstein flat and $\overline{B}$ is $R$-flat, we obtain that the $RG$-module $M\otimes_R V_i\cong (M\otimes_R B)\otimes_R \overline{B}^{\otimes i}$ is Gorenstein flat for every $i\geq 0$, by Proposition \ref{prop1}. We also consider an $RG$-flat resolution of $M$
	\begin{equation*}
		\textbf{Q}=\cdots \rightarrow Q_2 \rightarrow Q_1 \rightarrow Q_0 \xrightarrow{\beta} M \rightarrow 0.
	\end{equation*} 
	Splicing the resolution $\textbf{Q}$ with the exact sequence (\ref{eq1}), we obtain an acyclic complex of Gorenstein flat $RG$-modules 
	\begin{equation*}
		\mathfrak{P}=\cdots \rightarrow Q_2 \rightarrow Q_1 \rightarrow Q_0 \xrightarrow{\alpha \beta} M\otimes_R V_0 \rightarrow M\otimes_R V_1 \rightarrow M\otimes_R V_2 \rightarrow \cdots
	\end{equation*} 
	which has syzygy the $RG$-module $M$. It suffices to prove that the complex $I\otimes_{RG}\mathfrak{P}$ is acyclic for every injective $RG$-module $I$. Using \cite[Theorem 1.2]{BK} we will then obtain that the $RG$-module $M$ is Gorenstein flat. Let $I$ be an injective $RG$-module. Then, the $R$-split short exact sequence of $RG$-modules $0\rightarrow R \rightarrow B \rightarrow \overline{B}\rightarrow 0$ yields an induced exact sequence of $RG$-modules with diagonal action $0\rightarrow I\rightarrow B \otimes_{R}I\rightarrow \overline{B}\otimes_{R} I\rightarrow 0$ which is $RG$-split. Thus, it suffices to prove that the complex $(B\otimes_{R}I)\otimes_{RG}\mathfrak{P}$ is acyclic. Since $B$ is $R$-flat, we obtain that the acyclic complex $\textbf{Q}\otimes_R B$ is a flat resolution of the Gorenstein flat $RG$-module $M\otimes_{R}B$. Hence, every syzygy module of $\textbf{Q}\otimes_R B$ is also a Gorenstein flat $RG$-module (see \cite[Lemma 2.4]{Ben}). Moreover, the $RG$-module $(M\otimes_R B)\otimes_R \overline{B}^{\otimes i}\cong (M\otimes_R \overline{B}^{\otimes i})\otimes_R B$ is Gorenstein flat for every $i\geq 0$. Consequently, every syzygy module of the acyclic complex
	\begin{equation*}
		\mathfrak{P}\otimes_R B =\cdots\rightarrow  Q_1\otimes_R B\rightarrow  Q_0\otimes_R B \rightarrow  M\otimes_R V_0 \otimes_R B\rightarrow  M\otimes_R V_1 \otimes_R B\rightarrow \cdots
	\end{equation*} is a Gorenstein flat $RG$-module. Since the functor $\textrm{Tor}^{RG}_1 (I,\_\!\_)$ vanishes on Gorenstein flat $RG$-modules, we conclude that the complex $(B\otimes_{R}I)\otimes_{RG}\mathfrak{P}\cong I\otimes_{RG}(\mathfrak{P}\otimes_{R}B)$ is acyclic, as needed.\end{proof}

\begin{Theorem}\label{cora}Let $R$ be a commutative ring such that $\textrm{sfli}(R)<\infty$ and $G$ be an $\textsc{\textbf{lh}}\mathfrak{X}$-group. Then, $\mathscr{X}_{B,{\tt GFlat}}={\tt GFlat}(RG)={\tt WGFlat}(RG)$.
\end{Theorem}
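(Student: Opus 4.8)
The plan is to assemble Theorem \ref{cora} from the three inclusions already established, since the substantive work has been carried out in Proposition \ref{prop1}, Corollary \ref{theor1} and Proposition \ref{theor2}. First I would record the trivial inclusion ${\tt GFlat}(RG)\subseteq{\tt WGFlat}(RG)$, which is noted in Subsection 2.7 for an arbitrary ring and applied here to $RG$: every totally acyclic complex of flat $RG$-modules is in particular an acyclic complex of flat modules, so each of its syzygies is weak Gorenstein flat.

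Next, Corollary \ref{theor1} gives ${\tt WGFlat}(RG)\subseteq\mathscr{X}_{B,{\tt GFlat}}$ (using that $B(G,R)$ is $R$-free, via Proposition \ref{prop1}), and Proposition \ref{theor2} gives $\mathscr{X}_{B,{\tt GFlat}}\subseteq{\tt GFlat}(RG)$. Concatenating the three inclusions,
$$
{\tt GFlat}(RG)\subseteq{\tt WGFlat}(RG)\subseteq\mathscr{X}_{B,{\tt GFlat}}\subseteq{\tt GFlat}(RG),
$$
forces all three classes to coincide, which is the assertion. There is no genuine obstacle left at this stage; the real difficulty lay in Proposition \ref{prop1}, namely showing that tensoring a weak Gorenstein flat $RG$-module with an $R$-flat $RG$-module stays Gorenstein flat over any $\textsc{\textbf{lh}}\mathfrak{X}$-group, through the transfinite induction along the $\textsc{\textbf{h}}_{\alpha}\mathfrak{X}$-hierarchy and the closing direct-limit step, and in the dimension-shifting argument of Proposition \ref{theor2}, which builds an explicit complex $\mathfrak{P}$ out of the $\overline{B}(G,R)$-powers and checks acyclicity of $I\otimes_{RG}\mathfrak{P}$ for every injective $I$ using the vanishing of $\textrm{Tor}^{RG}_1(I,\_\!\_)$ on Gorenstein flat modules together with the stability criterion \cite[Theorem 1.2]{BK}. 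Thus the proof of Theorem \ref{cora} is simply the combination of Corollary \ref{theor1}, Proposition \ref{theor2} and the inclusion ${\tt GFlat}(RG)\subseteq{\tt WGFlat}(RG)$.
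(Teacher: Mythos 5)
Your proposal is correct and matches the paper's proof exactly: the paper also deduces Theorem \ref{cora} by chaining the inclusion ${\tt WGFlat}(RG)\subseteq\mathscr{X}_{B,{\tt GFlat}}$ from Corollary \ref{theor1}, the inclusion $\mathscr{X}_{B,{\tt GFlat}}\subseteq{\tt GFlat}(RG)$ from Proposition \ref{theor2}, and the clear inclusion ${\tt GFlat}(RG)\subseteq{\tt WGFlat}(RG)$. Your accompanying remarks on where the real work lies (Propositions \ref{prop1} and \ref{theor2}) are accurate but not needed for this step.
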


\begin{proof}By Corollary \ref{theor1}, we have ${\tt WG Flat}(RG)\subseteq\mathscr{X}_{B,{\tt GFlat}}$. Moreover, Proposition \ref{theor2} yields $\mathscr{X}_{B,{\tt GFlat}}\subseteq{\tt GFlat}(RG)$ and the inclusion ${\tt GFlat}(RG)\subseteq{\tt WGFlat}(RG)$ is clear. We conclude that $\mathscr{X}_{B,{\tt GFlat}}={\tt GFlat}(RG)={\tt WGFlat}(RG)$, as needed.
\end{proof}

%\begin{Conjecture}Let $R$ be a commutative ring such that $\textrm{sfli}(R)<\infty$ and $G$ be a group. Then, ${\tt GFlat}(RG)={\tt WGFlat}(RG)$.\end{Conjecture}

\begin{Corollary}Let $R$ be a commutative ring of finite Gorenstein weak global dimension. Then, Gorenstein flateness is closed under subgroups that are in $\textsc{\textbf{lh}}\mathfrak{X}$.
\end{Corollary}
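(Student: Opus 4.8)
The plan is to read the statement off Theorem~\ref{cora} applied to the subgroup. Recall that, by Subsection~2.3, a commutative ring $R$ has finite Gorenstein weak global dimension precisely when $\sfli(R)<\infty$, so that Theorem~\ref{cora} is available for every $\textsc{\textbf{lh}}\mathfrak{X}$-group over $R$.

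Let $G$ be an arbitrary group, let $H\leq G$ be a subgroup with $H\in\textsc{\textbf{lh}}\mathfrak{X}$, and let $M$ be a Gorenstein flat $RG$-module; the claim is that $\textrm{Res}^G_H M$ is a Gorenstein flat $RH$-module. First I would note that $M\in{\tt WGFlat}(RG)$: being Gorenstein flat, $M$ is a syzygy of a totally acyclic complex of flat $RG$-modules, which is in particular an acyclic complex of flat $RG$-modules. Restricting such a complex to $RH$ need not preserve total acyclicity, but it does preserve acyclicity, and since $RG$ is free as a left $RH$-module (with basis a set of representatives of the right cosets $H\backslash G$) every flat $RG$-module is flat over $RH$; hence $\textrm{Res}^G_H M$ is a syzygy of an acyclic complex of flat $RH$-modules, i.e. $\textrm{Res}^G_H M\in{\tt WGFlat}(RH)$.

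It then remains to apply Theorem~\ref{cora} to the group $H$: since $\sfli(R)<\infty$ and $H\in\textsc{\textbf{lh}}\mathfrak{X}$, we obtain ${\tt WGFlat}(RH)={\tt GFlat}(RH)$, and therefore $\textrm{Res}^G_H M$ is Gorenstein flat as an $RH$-module, as needed.

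I do not expect any genuine obstacle here: the corollary only packages Theorem~\ref{cora} together with the two routine facts that a Gorenstein flat module is weak Gorenstein flat and that restriction along $RH\hookrightarrow RG$ carries flat modules to flat modules and acyclic complexes to acyclic complexes. The one conceptual point worth flagging is why one cannot argue directly via total acyclicity --- restriction need not preserve it --- which is precisely the gap bridged by the equality ${\tt GFlat}(RH)={\tt WGFlat}(RH)$ supplied by Theorem~\ref{cora}.
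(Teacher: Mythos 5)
Your proposal is correct and matches the paper's own argument: the paper likewise observes that $\textrm{Res}^G_H(M)$ is weak Gorenstein flat over $RH$ and then applies Theorem~\ref{cora} to the subgroup $H$. The extra details you supply (freeness of $RG$ over $RH$, preservation of acyclicity under restriction) are just the routine justification of that first step.
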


\begin{proof}Let $M$ be a Gorenstein flat $RG$-module and $H$ be a subgroup of $G$ such that $H\in\textsc{\textbf{lh}}\mathfrak{X}$. Then, the $RH$-module $\textrm{Res}_H^G(M)$ is weak Gorenstein flat and hence Theorem \ref{cora} implies that $\textrm{Res}_H^G(M)\in{\tt GFlat}(RH)$.\end{proof}

\begin{Remark}\label{rem38}\rm Let $R$ be a commutative ring such that $\textrm{sfli}(R)<\infty$ and $G$ be a group in the class $\mathfrak{X}$, i.e. there exists a weak characteristic module $A$ for $G$ over $R$.  We also consider an $RG$-module $M$ such that the $RG$-module $M\otimes_R A$ is Gorenstein flat. Then, $M$ is a Gorenstein flat $RG$-module. 
	
	Indeed, there exists an $R$-pure $RG$-short exact sequence $0\rightarrow R \rightarrow A \rightarrow \overline{A}\rightarrow 0$, where the $RG$-modules $A,\overline{A}$ are $R$-flat. Following step by step the proof of Proposition \ref{theor2}, we construct an acyclic complex of Gorenstein flat modules 
	\begin{equation*}
		\mathfrak{P'}=\cdots \rightarrow Q_2' \rightarrow Q_1' \rightarrow Q_0' \rightarrow M\otimes_R V_0' \rightarrow M\otimes_R V_1' \rightarrow M\otimes_R V_2' \rightarrow \cdots,
	\end{equation*} where $V_i'={\overline{A}}^{\otimes i}\otimes_R A$, for every $i\geq 0$, and has syzygy the $RG$-module $M$. Using the $R$-pure $RG$-short exact sequence $0\rightarrow R \rightarrow A \rightarrow \overline{A} \rightarrow 0 $ and \cite[Theorem 1.2]{BK}, it suffices to show that the complex $I\otimes_{RG}(\mathfrak{P'}\otimes_{R}A)$ is acyclic for every injective $RG$-module $I$. This follows exactly as in the proof of Proposition \ref{theor2}, since every syzygy module of $\mathfrak{P'}\otimes_{R}A$ is Gorenstein flat.
\end{Remark}

\begin{Corollary} Let $R$ be a commutative ring of finite weak global dimension and $G$ be a group of type $\Phi_R$. Then, $\mathscr{X}_{B,{\tt Flat}}={\tt GFlat}(RG)={\tt WGFlat}(RG)$, where $\mathscr{X}_{B,{\tt Flat}}=\{M\in \textrm{Mod}(RG): \, M\otimes_R B(G,R)\in {\tt Flat}(RG)\}$.
\end{Corollary}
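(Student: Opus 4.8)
The plan is to derive the statement from Theorem \ref{cora} by showing that, under the present hypotheses, every $RG$-module of the form $M\otimes_R B(G,R)$ has finite $RG$-flat dimension, so that for such modules being Gorenstein flat over $RG$ is equivalent to being flat over $RG$. First I would record the elementary facts that $\textrm{sfli}(R)\leq\textrm{wgl.dim}R<\infty$ and that, $G$ being of type $\Phi_R$, the module $B(G,R)$ is $RH$-free for every finite subgroup $H\leq G$ and hence $\textrm{pd}_{RG}B(G,R)<\infty$; in particular $B(G,R)$ is a characteristic module and $G\in\mathfrak{Y}\subseteq\mathfrak{X}\subseteq\textsc{\textbf{lh}}\mathfrak{X}$. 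Consequently Theorem \ref{cora} applies and yields $\mathscr{X}_{B,{\tt GFlat}}={\tt GFlat}(RG)={\tt WGFlat}(RG)$, so the whole statement reduces to the equality $\mathscr{X}_{B,{\tt Flat}}=\mathscr{X}_{B,{\tt GFlat}}$; the inclusion ${\tt Flat}(RG)\subseteq{\tt GFlat}(RG)$ immediately gives $\mathscr{X}_{B,{\tt Flat}}\subseteq\mathscr{X}_{B,{\tt GFlat}}$.

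For the reverse inclusion I would argue as follows. Write $B=B(G,R)$, fix an $RG$-module $M$ with $M\otimes_R B\in{\tt GFlat}(RG)$, and set $d=\textrm{pd}_{RG}B$ and $w=\textrm{wgl.dim}R$. Choosing a projective resolution $0\to P_d\to\cdots\to P_0\to B\to 0$ of $B$ over $RG$ and noting that its syzygies are $R$-flat (since $B$ is $R$-free and each $P_i$ is $R$-projective, applying the obvious induction), the functor $M\otimes_R\_\!\_$ with the diagonal action preserves exactness and gives an exact sequence $0\to M\otimes_R P_d\to\cdots\to M\otimes_R P_0\to M\otimes_R B\to 0$ of $RG$-modules. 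Each $M\otimes_R P_i$ is an $RG$-direct summand of a direct sum of copies of the diagonal module $M\otimes_R RG$, which is isomorphic to $\textrm{Ind}^G_1(\textrm{Res}^G_1 M)$, so $\textrm{fd}_{RG}(M\otimes_R P_i)\leq\textrm{fd}_R(\textrm{Res}^G_1 M)\leq w$; carrying this through the above sequence gives $\textrm{fd}_{RG}(M\otimes_R B)\leq d+w<\infty$. Since $M\otimes_R B$ is then a Gorenstein flat $RG$-module of finite $RG$-flat dimension, it is flat over $RG$ — because a Gorenstein flat module of finite flat dimension is flat (see \cite{H1} and \cite[Corollary 4.12]{SS}) — that is, $M\in\mathscr{X}_{B,{\tt Flat}}$. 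This yields $\mathscr{X}_{B,{\tt GFlat}}\subseteq\mathscr{X}_{B,{\tt Flat}}$, and hence $\mathscr{X}_{B,{\tt Flat}}={\tt GFlat}(RG)={\tt WGFlat}(RG)$.

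I expect the only delicate points to be the identification of the diagonal module $M\otimes_R RG$ with the induced module $\textrm{Ind}^G_1(\textrm{Res}^G_1 M)$ — which is where the finiteness of $\textrm{wgl.dim}R$ is really used, via the bound $\textrm{fd}_R(\textrm{Res}^G_1 M)\leq\textrm{wgl.dim}R$ controlling $\textrm{fd}_{RG}(M\otimes_R P_i)$ — together with the appeal to the standard collapse of Gorenstein flatness to flatness for modules of finite flat dimension; the remaining steps amount to routine bookkeeping with the length-$d$ resolution of $B(G,R)$ and an application of Theorem \ref{cora}.
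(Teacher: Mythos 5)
Your argument is correct, and its overall skeleton is the same as the paper's: invoke Theorem \ref{cora} to reduce everything to the inclusion $\mathscr{X}_{B,{\tt GFlat}}\subseteq\mathscr{X}_{B,{\tt Flat}}$, show that $M\otimes_R B(G,R)$ has finite flat dimension over $RG$, and conclude via the fact that a Gorenstein flat module of finite flat dimension is flat (the paper cites \cite[Lemma 2.4]{KS} for ${\tt GFlat}(RG)\cap\overline{{\tt Flat}}(RG)={\tt Flat}(RG)$, which is the cleanest reference for that step). Where you diverge is in how the finiteness of $\textrm{fd}_{RG}(M\otimes_R B(G,R))$ is obtained: the paper first observes that $M\in{\tt GFlat}(RG)\subseteq{\tt WGFlat}(R)$, so the finiteness of $\textrm{wgl.dim}(R)$ forces $M$ to be $R$-flat, and then tensoring a finite flat $RG$-resolution of $B(G,R)$ with the $R$-flat module $M$ gives the sharper bound $\textrm{fd}_{RG}(M\otimes_R B(G,R))\leq\textrm{fd}_{RG}B(G,R)$; you instead avoid using any flatness of $M$, exploiting that the finite $RG$-projective resolution of $B(G,R)$ is $R$-split (all terms $R$-projective) and the untwisting isomorphism $M\otimes_R RG\cong\textrm{Ind}^G_1\textrm{Res}^G_1 M$ to get $\textrm{fd}_{RG}(M\otimes_R P_i)\leq\textrm{wgl.dim}(R)$ and hence the bound $\textrm{pd}_{RG}B(G,R)+\textrm{wgl.dim}(R)$. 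Your variant is slightly longer and gives a weaker numerical bound, but it has the mild advantage of not needing to know that $M$ is $R$-flat (so it isolates exactly where finite weak global dimension enters); the paper's route is shorter because the $R$-flatness of $M$ comes for free from Theorem \ref{cora}. Both are valid proofs of the statement.
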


\begin{proof}In view of Theorem \ref{cora}, it suffices to show that $\mathscr{X}_{B,{\tt GFlat}}\subseteq\mathscr{X}_{B,{\tt Flat}}$. Let $M\in \mathscr{X}_{B,{\tt GFlat}}$. Then, $M\in {\tt WGFlat}(RG)\subseteq {\tt WGFlat}(R)$, and hence the finiteness of $\textrm{wgl.dim}(R)$ implies that $M$ is $R$-flat. Since $\textrm{fd}_{RG} B(G,R)<\infty$, we obtain that $\textrm{fd}_{RG}M\otimes_R B(G,R)<\infty$. We conclude that $M\otimes_R B(G,R)\in {\tt GFlat}(RG)\cap \overline{{\tt Flat}}(RG)={\tt Flat}(RG)$ (see \cite[Lemma 2.4]{KS}).
\end{proof}

\section{Gorenstein projective and PGF modules over $\textsc{\textbf{lh}}\mathfrak{X}$-groups}

We consider a commutative ring $R$ such that $\textrm{sfli}(R)<\infty$ and an $\textsc{\textbf{lh}}\mathfrak{X}$-group $G$. Our goal in this section is to give a characterization of the class ${\tt GProj}(RG)$ related to the $RG$-module $B(G,R)$. Moreover, under these conditions, we conclude that the classes ${\tt GProj}(RG)$, ${\tt PGF}(RG)$ and ${\tt WGProj}(RG)$ coincide. As a result, under the above conditions, every Gorenstein projective $RG$-module is Gorenstein flat.

\begin{Lemma}\label{Llem45}{\rm(\cite[Lemma 2.12]{St1})} Let $R$ be a commutative ring, $G$ be a group and $H$ be a subgroup of $G$. Then, for every PGF $RH$-module $M$, the $RG$-module $\textrm{Ind}^G_H M$ is also PGF.
\end{Lemma}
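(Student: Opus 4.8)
The plan is to mirror the proof of Lemma \ref{llem45} (the Gorenstein flat analogue), replacing "flat" by "projective" and "totally acyclic in the flat sense" by "the PGF acyclicity condition", and being careful about the key point where the two arguments differ: the PGF condition involves $I\otimes_R\_\!\_$ for injective $I$, and one must track how this interacts with induction rather than with $\mathrm{Hom}$.

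First I would unwind the hypothesis: since $M$ is a PGF $RH$-module, there is an acyclic complex of projective $RH$-modules
$$\textbf{P}=\cdots\rightarrow P_2\rightarrow P_1\rightarrow P_0\rightarrow P_{-1}\rightarrow\cdots$$
with $M=\mathrm{Im}(P_1\rightarrow P_0)$ such that $J\otimes_{RH}\textbf{P}$ is acyclic for every injective $RH$-module $J$. Applying the exact functor $\mathrm{Ind}^G_H=RG\otimes_{RH}\_\!\_$ gives an acyclic complex $\mathrm{Ind}^G_H\textbf{P}$ of projective $RG$-modules with $\mathrm{Ind}^G_H M$ as a syzygy. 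It remains to verify the PGF acyclicity condition over $RG$: that $I\otimes_{RG}(\mathrm{Ind}^G_H\textbf{P})$ is acyclic for every injective $RG$-module $I$. Here I would use the standard natural isomorphism of complexes $I\otimes_{RG}(RG\otimes_{RH}\textbf{P})\cong I\otimes_{RH}\textbf{P}$ (restricting $I$ along $RH\hookrightarrow RG$), together with the fact that an injective $RG$-module restricts to an injective $RH$-module — exactly the ingredient already invoked in the proof of Lemma \ref{llem45}. Then acyclicity of $I\otimes_{RH}\textbf{P}$ follows from the PGF hypothesis on $\textbf{P}$, and we are done.

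The one point that deserves care — and is the main (mild) obstacle — is the restriction statement: an injective $RG$-module restricted to $RH$ need not be injective in general (only when $G/H$ has some finiteness, or more precisely injective $RG$-modules are $RH$-injective whenever the extension $RH\subseteq RG$ is such that $RG$ is projective as a right $RH$-module, which holds here since $RG$ is $RH$-free). This is the classical fact that for a subgroup $H\leq G$, every injective $RG$-module is injective as an $RH$-module — it holds because $\mathrm{Res}^G_H$ has the exact left adjoint $\mathrm{Ind}^G_H$ (as $RG$ is free, hence flat, as a right $RH$-module). Since this fact is already used without comment in Lemma \ref{llem45}, I would simply cite it the same way. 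Everything else is the routine transport of an acyclic complex through an exact functor together with a hom-tensor/restriction adjunction isomorphism; no serious difficulty arises, and the lemma is in any case quoted from \cite[Lemma 2.12]{St1}.
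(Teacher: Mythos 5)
Your proof is correct and is essentially the argument the paper intends: the lemma is only cited from \cite[Lemma 2.12]{St1}, and your argument is the exact PGF analogue of the paper's own proof of Lemma \ref{llem45}, using exactness of $\textrm{Ind}^G_H$, the isomorphism $I\otimes_{RG}\textrm{Ind}^G_H\textbf{P}\cong I\otimes_{RH}\textbf{P}$, and the fact that injective $RG$-modules restrict to injective $RH$-modules since $RG$ is $RH$-free.
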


\begin{Proposition}\label{Prop1}Let $R$ be a commutative ring such that $\textrm{sfli}(R)<\infty$ and $G$ be an $\textsc{\textbf{lh}}\mathfrak{X}$-group. Consider a weak Gorenstein projective $RG$-module $M$ and an $RG$-module $N$ which is projective as $R$-module. Then, $M\otimes_R N \in{\tt PGF}(RG)$.
\end{Proposition}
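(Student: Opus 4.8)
The plan is to mirror the proof of Proposition \ref{prop1}, replacing "flat" by "projective" and "Gorenstein flat" by "PGF" throughout, and replacing the appeals to \cite[Corollary 4.12]{SS} (closure of ${\tt GFlat}$ under direct limits, stability via \cite[Theorem 1.2]{BK}) by the corresponding closure properties of ${\tt PGF}$ recorded in Subsection 2.1 and the PGF stability result \cite[Theorem 5.7]{KS}. Concretely, I would first establish, by transfinite induction on the ordinal $\alpha$ with $H\in\textsc{\textbf{h}}_{\alpha}\mathfrak{X}$, that $M\otimes_R N$ is a PGF $RH$-module for every $\textsc{\textbf{h}}\mathfrak{X}$-subgroup $H$ of $G$.

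For the base case $\alpha=0$, the subgroup $H$ admits a weak characteristic module, so $\textrm{sfli}(RH)<\infty$ by \cite[Theorem 3.14]{St}; then ${\tt WGProj}(RH)={\tt PGF}(RH)$ by Subsection 2.7. Since $M$ restricts to a module in ${\tt WGProj}(RH)$, there is an acyclic complex of projective $RH$-modules $\textbf{P}=\cdots\to P_1\to P_0\to P_{-1}\to\cdots$ with $M=\textrm{Im}(P_1\to P_0)$; as $N$ is $R$-projective, the diagonal complex $\textbf{P}\otimes_R N$ is an acyclic complex of projective $RH$-modules with syzygy $M\otimes_R N$, and $\textrm{sfli}(RH)<\infty$ forces $I\otimes_{RH}(\textbf{P}\otimes_R N)$ to be acyclic for every injective $RH$-module $I$, so $M\otimes_R N\in{\tt PGF}(RH)$. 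For the inductive step, using the $\mathbb{Z}$-split resolution of $\mathbb{Z}$ by permutation modules $\mathbb{Z}[H/H']$ with $H'\in\textsc{\textbf{h}}_\beta\mathfrak{X}$, $\beta<\alpha$, I get an exact sequence of $RH$-modules $0\to K_r\to\cdots\to K_0\to M\otimes_R N\to 0$ with each $K_i$ a direct sum of modules $\textrm{Ind}^H_{H'}\textrm{Res}^H_{H'}(M\otimes_R N)$; the induction hypothesis gives $\textrm{Res}^H_{H'}(M\otimes_R N)\in{\tt PGF}(RH')$, Lemma \ref{Llem45} promotes this to $\textrm{Ind}^H_{H'}\textrm{Res}^H_{H'}(M\otimes_R N)\in{\tt PGF}(RH)$, and closure of ${\tt PGF}(RH)$ under direct sums gives $K_i\in{\tt PGF}(RH)$, hence $\textrm{PGF-dim}_{RH}(M\otimes_R N)\le r$. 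Running the same argument on a coresolution $0\to M\to P_{r-1}\to\cdots\to P_0\to M'\to 0$ with $P_i$ projective and $M'\in{\tt WGProj}(RG)$ (available since $M\in{\tt WGProj}(RG)$), tensoring with $N$ over $R$ and using that each $P_i\otimes_R N$ is $RG$-projective, I likewise get $\textrm{PGF-dim}_{RH}(M'\otimes_R N)\le r$; then the PGF analogue of the Bennis--type argument — i.e. an acyclic complex of projective $RH$-modules with syzygies $M\otimes_R N$ whose $I\otimes_{RH}(-)$ stays acyclic, assembled from the finite PGF resolution on the left and the coresolution on the right — shows $M\otimes_R N\in{\tt PGF}(RH)$. (Here one uses that PGF dimension behaves like a genuine homological dimension, e.g. via \cite{DE}, so that finite PGF dimension together with membership of suitable syzygies forces PGF.)

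Finally, for an $\textsc{\textbf{lh}}\mathfrak{X}$-group $G$, write $G=\bigcup_\lambda G_\lambda$ as the filtered union of its finitely generated subgroups, all of which lie in $\textsc{\textbf{h}}\mathfrak{X}$. By the previous step $M\otimes_R N$ is PGF as an $RG_\lambda$-module, so $\textrm{Ind}^G_{G_\lambda}(M\otimes_R N)$ is PGF over $RG$ by Lemma \ref{Llem45}, and $M\otimes_R N\cong\varinjlim_\lambda\textrm{Ind}^G_{G_\lambda}(M\otimes_R N)$ is a direct limit of PGF $RG$-modules; invoking closure of ${\tt PGF}(RG)$ under the relevant direct limits (or, if one prefers, the identity ${\tt PGF}={\tt GProj}$ over rings of finite $\textrm{sfli}$ combined with a direct-limit argument on the PGF-witnessing complexes) yields $M\otimes_R N\in{\tt PGF}(RG)$.

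The main obstacle I anticipate is the inductive step's final invocation: unlike ${\tt GFlat}$, for which \cite[Theorem 1.2]{BK} gives a clean "syzygy of an acyclic complex of Gorenstein flats with acyclic $I\otimes_{RH}(-)$ is Gorenstein flat" criterion, one must either run the PGF stability result \cite[Theorem 5.7]{KS} — which requires exhibiting the intermediate complex as built from PGF modules with $I\otimes_{RH}(-)$ acyclic — or argue directly that finite PGF dimension plus the coresolution data forces PGF, using the fact that ${\tt PGF}(RH)$ is closed under extensions and kernels of epimorphisms. Care is also needed to ensure the permutation-module resolution really produces $K_i$ of the stated induced-restricted form; this is exactly \cite[Lemma 2.3]{Bis2}, applied as in Proposition \ref{prop1}, and poses no new difficulty.
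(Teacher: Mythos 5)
Your base case and inductive step follow the paper's argument essentially verbatim (the conclusion of the inductive step is exactly the appeal to \cite[Proposition 2]{DE} that the paper makes, so your ``Bennis-type'' worry there is easily resolved). The genuine gap is in your final step, the passage from $\textsc{\textbf{h}}\mathfrak{X}$-subgroups to the full $\textsc{\textbf{lh}}\mathfrak{X}$-group $G$. You write $M\otimes_R N\cong\varinjlim_\lambda\textrm{Ind}^G_{G_\lambda}(M\otimes_R N)$ and invoke ``closure of ${\tt PGF}(RG)$ under the relevant direct limits,'' but no such closure is available: unlike ${\tt GFlat}$, which is closed under filtered colimits by \cite[Corollary 4.12]{SS} (this is precisely what makes the analogous step work in Proposition \ref{prop1}), the class ${\tt PGF}$ behaves like a projective-type class and is not known to be closed under direct limits. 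Your proposed fallbacks do not repair this: the identity ${\tt PGF}(RG)={\tt WGProj}(RG)$ requires $\textrm{sfli}(RG)<\infty$, which is not known at this point of the argument (only $\textrm{sfli}(R)<\infty$ is assumed), and taking a direct limit of the witnessing acyclic complexes of projectives only produces an acyclic complex of flat modules, which would at best give Gorenstein flatness, not PGF.

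The paper's actual route around this is different and is the one piece of the argument you are missing: induction on the cardinality of $G$. If $G$ is countable, it acts on a tree with finitely generated stabilizers, hence lies in $\textsc{\textbf{h}}\mathfrak{X}$ by \cite[Lemma 2.5]{JKL}, and the transfinite induction already covers it. If $G$ is uncountable, write $G=\bigcup_{\lambda<\delta}G_\lambda$ as a \emph{continuous ascending} union of subgroups of strictly smaller cardinality; by the cardinality induction $M\otimes_R N$ is PGF over each $RG_\lambda$, and \cite[Proposition 4.5]{St1} then gives $\textrm{PGF-dim}_{RG}(M\otimes_R N)\leq 1$. Finally, one uses a short exact sequence $0\rightarrow M\rightarrow P\rightarrow M''\rightarrow 0$ with $P$ projective and $M''\in{\tt WGProj}(RG)$, tensors with $N$, applies the same bound to $M''\otimes_R N$, and concludes that $M\otimes_R N$ is PGF from closure of ${\tt PGF}(RG)$ under kernels of epimorphisms. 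Without this (or some substitute for direct-limit closure of ${\tt PGF}$), your proof does not go through.
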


\begin{proof}Let $M\in{\tt WGProj}(RG)$ and $N\in{\tt Proj}(R)$. We will first show that $M\otimes_R N$ is PGF as $RH$-module over any $\textsc{\textbf{h}}\mathfrak{X}$-subgroup $H$ of $G$. We use transfinite induction on the ordinal number $\alpha$, which is such that $H\in \textsc{\textbf{h}}_{\alpha}\mathfrak{X}$. If $\alpha=0$, then $H$ admits a weak characteristic module, and hence \cite[Theorem 3.14]{St} yields $\textrm{sfli}(RH) <\infty$. Since $M$ is a weak Gorenstein projective $RG$-module, it follows that $M$ is also a weak Gorenstein projective $RH$-module and hence there exists an acyclic complex of projective $RH$-modules $$\textbf{P}=\cdots \rightarrow P_{2}\rightarrow P_1\rightarrow P_0 \rightarrow P_{-1}\rightarrow \cdots$$ such that $M=\textrm{Im}(P_1 \rightarrow P_0)$. Since $N$ is $R$-projective, we obtain the induced complex of $RH$-projective modules (with diagonal action) $$\textbf{P}\otimes_R N = \cdots \rightarrow P_{2}\otimes_R N\rightarrow P_1\otimes_R N\rightarrow P_0\otimes_R N \rightarrow P_{-1}\otimes_R N\rightarrow \cdots,$$ where $M\otimes_R N= \textrm{Im}(P_1\otimes_R N \rightarrow P_0\otimes_R N)$. Then, the finiteness of $\textrm{sfli}(RH)$ implies that the complex $I\otimes_{RH}(\textbf{P}\otimes_R N)$ is acyclic for every injective $RH$-module $I$. We conclude that the $RH$-module $M\otimes_R N$ is PGF. 
	
	 Now we assume that $M\otimes_R N$ is PGF as $RH'$-module for every $\textsc{\textbf{h}}_{\beta}\mathfrak{X}$-subgroup $H'$ of $G$ and every $\beta<\alpha$. Let $H$ be an $\textsc{\textbf{h}}_{\alpha}\mathfrak{X}$-subgroup of $G$. Then, there
	exists an exact sequence of $\mathbb{Z}H$-modules $$0\rightarrow C_r \rightarrow \cdots \rightarrow C_1 \rightarrow C_0 \rightarrow \mathbb{Z} \rightarrow 0$$	where each $C_i$
	is a direct sum of permutation $\mathbb{Z}H$-modules of the form $\mathbb{Z}[H/H']$, with $H'$ an $\textsc{\textbf{h}}_{\beta}\mathfrak{X}$-subgroup of $H$ for some $\beta<\alpha$. We note that the integer $r$ is the dimension of the $H$-CW-complex provided by the definition of $H$ being an $\textsc{\textbf{h}}_{\alpha}\mathfrak{X}$-group. The above exact sequence yields an exact sequence of $RH$-modules of the form
	\begin{equation}\label{Eqqqq}
		0\rightarrow K_r \rightarrow \cdots \rightarrow K_1 \rightarrow K_0 \rightarrow M\otimes_R N \rightarrow 0,
	\end{equation}
	such that every $K_i$ is a direct sum of modules of the form ${\textrm{Ind}^H_{H'}}{\textrm{Res}^H_{H'}} (M \otimes_R N)$, where $H'\in \textsc{\textbf{h}}_{\beta}\mathfrak{X}$, $\beta<\alpha$ (see also \cite[Lemma 2.3]{Bis2}).
	Our induction hypothesis implies that ${\textrm{Res}^H_{H'}} (M \otimes_R N)$ is a PGF $RH'$-module. From Lemma \ref{Llem45} we deduce that ${\textrm{Ind}^H_{H'}}{\textrm{Res}^H_{H'}}(M \otimes_R N)$ is a PGF $RH$-module. The class ${\tt PGF}(RH)$ is closed under direct sums, and hence the $RH$-module $K_i$ is PGF, for every $i=0,\dots r$. Thus, the exact sequence (\ref{Eqqqq}) yields $\textrm{PGF-dim}_{RH}(M\otimes_R N)\leq r$. Moreover, $M\in {\tt WGProj}(RG)$, and hence there exists an exact sequence of $RG$-modules of the form $$0\rightarrow M \rightarrow P_{r-1} \rightarrow \cdots \rightarrow P_1 \rightarrow P_0 \rightarrow M' \rightarrow 0,$$ where $P_i$ is projective for every $i=0,1,\dots,r-1$ and $M'\in {\tt WGProj}(RG)$. Since $N$ is $R$-projective, we obtain the induced exact sequence of $RG$-modules (with diagonal action)
	\begin{equation}\label{Eqqq}
		0\rightarrow M \otimes_R N\rightarrow P_{r-1}\otimes_R N \rightarrow \cdots \rightarrow P_0\otimes_R N \rightarrow M'\otimes_R N \rightarrow 0,
	\end{equation}where $P_i \otimes_R N$ is a projective $RG$-module (and hence projective as $RH$-module) for every $i=0,1,\dots,r-1$. The same argument as above for the weak Gorenstein projective $RG$-module $M'$ shows that $\textrm{PGF-dim}_{RH}(M'\otimes_R N)\leq r$. In view of \cite[Proposition 2]{DE}, we conclude that $M\otimes_R N$ is a PGF $RH$-module.
	
	Let $G$ be an $\textsc{\textbf{lh}}\mathfrak{X}$-group. We will proceed by induction on the cardinality of $G$. If $G$ is a
	countable group, then $G$ acts on a tree with stabilizers that are certain finitely generated subgroups of $G$, and hence $G\in \textsc{\textbf{h}}\mathfrak{F}$ (see \cite[Lemma 2.5]{JKL}). Thus, we assume that $G$ is uncountable. The group $G$ may then be expressed as a continuous ascending union of subgroups $G = \cup_{\lambda<\delta} G_{\lambda}$, for some ordinal $\delta$, where each $G_{\lambda}$ has strictly smaller cardinality than $G$. By induction we have that $M\otimes_R N$ is PGF as $RG_{\lambda}$-module, for every $\lambda<\delta$. Thus, invoking \cite[Proposition 4.5]{St1}, we infer that $\textrm{PGF-dim}_{RG}(M\otimes_R N)\leq 1$. Since $M\in {\tt WGProj}(RG)$, there exists a short exact sequence of $RG$-modules of the form $$0\rightarrow M \rightarrow P \rightarrow M'' \rightarrow 0,$$ where $M''\in {\tt WGProj}(RG)$ and $P\in {\tt Proj}(RG)$. Since $N$ is $R$-projective, we obtain the following short exact sequence of $RG$-modules (with diagonal action) \begin{equation}\label{equu}0\rightarrow M\otimes_R N\rightarrow P \otimes_R N\rightarrow M''\otimes_R N\rightarrow 0,
	\end{equation} where the $RG$-module $P\otimes_R N$ is projective. The same argument as before for the $RG$-module $M'' \in {\tt WGProj}(RG)$ yields $\textrm{PGF-dim}_{RG}(M''\otimes_R N)\leq 1$, and hence the exact sequence (\ref{equu}) implies that the $RG$-module $M\otimes_R N$ is PGF, as needed.\end{proof}

\begin{Corollary}\label{porisma}Let $R$ be a commutative ring such that $\textrm{sfli}(R)<\infty$ and $G$ be an $\textsc{\textbf{lh}}\mathfrak{X}$-group. Consider a Gorenstein projective $RG$-module $M$ and an $RG$-module $N$ which is projective as $R$-module. Then, $M\otimes_R N \in{\tt GProj}(RG)$.
\end{Corollary}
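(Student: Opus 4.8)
The plan is to deduce the statement immediately from Proposition \ref{Prop1} together with the Saroch--Stovicek inclusion ${\tt PGF}(RG)\subseteq{\tt GProj}(RG)$, so that there is essentially no new work to do here beyond two bookkeeping observations.

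First I would note that every Gorenstein projective $RG$-module is weak Gorenstein projective: a totally acyclic complex of projective $RG$-modules is in particular an acyclic complex of projective $RG$-modules, so ${\tt GProj}(RG)\subseteq{\tt WGProj}(RG)$ (this inclusion was already recorded in the preliminaries). Hence the given module $M$ lies in ${\tt WGProj}(RG)$. Since $\textrm{sfli}(R)<\infty$, $G$ is an $\textsc{\textbf{lh}}\mathfrak{X}$-group, $M\in{\tt WGProj}(RG)$ and $N$ is projective as an $R$-module, Proposition \ref{Prop1} applies and gives $M\otimes_R N\in{\tt PGF}(RG)$. Finally, by \cite[Theorem 4.4]{SS} we have ${\tt PGF}(RG)\subseteq{\tt GProj}(RG)$, and therefore $M\otimes_R N\in{\tt GProj}(RG)$, as desired.

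I do not anticipate any real obstacle: all the difficulty has been absorbed into Proposition \ref{Prop1}, whose proof carries out the transfinite induction over the $\textsc{\textbf{h}}\mathfrak{X}$-hierarchy and the induction on the cardinality of $G$. The reason one routes through PGF modules rather than arguing directly with a totally acyclic complex of projective $RG$-modules is precisely that tensoring such a complex with the $R$-projective $RG$-module $N$ (with the diagonal action) need not visibly stay totally acyclic as a complex of $RG$-modules, whereas the defining condition of a PGF module (exactness of $I\otimes_{RG}-$ for every injective $RG$-module $I$) is the one that is preserved under the inductive arguments of Proposition \ref{Prop1}; the inclusion ${\tt PGF}(RG)\subseteq{\tt GProj}(RG)$ then converts the conclusion back into a statement about Gorenstein projective modules.
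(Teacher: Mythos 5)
Your argument is correct and coincides with the route the paper itself intends: since ${\tt GProj}(RG)\subseteq{\tt WGProj}(RG)$, Proposition \ref{Prop1} yields $M\otimes_R N\in{\tt PGF}(RG)$, and \cite[Theorem 4.4]{SS} gives ${\tt PGF}(RG)\subseteq{\tt GProj}(RG)$. Nothing is missing.
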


\begin{Definition}Let $R$ be a commutative ring and $G$ be a group. We denote by $\mathscr{X}_{B,{\tt GProj}}$ the class of $RG$-modules $\mathscr{X}_{B,{\tt GProj}}=\{M\in \textrm{Mod}(RG): \, M\otimes_R B(G,R)\in {\tt GProj}(RG)\}$ and by $\mathscr{X}_{B,{\tt PGF}}$ the class of $RG$-modules $\mathscr{X}_{B,{\tt PGF}}=\{M\in \textrm{Mod}(RG): \, M\otimes_R B(G,R)\in {\tt PGF}(RG)\}$.
\end{Definition}

\begin{Corollary}\label{Theo1}Let $R$ be a commutative ring such that $\textrm{sfli}(R)<\infty$ and $G$ be an $\textsc{\textbf{lh}}\mathfrak{X}$-group. Then, ${\tt WG Proj}(RG)\subseteq\mathscr{X}_{B,{\tt PGF}}$.
\end{Corollary}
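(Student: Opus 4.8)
The plan is to obtain this as an immediate corollary of Proposition \ref{Prop1}, in complete parallel with the way Corollary \ref{theor1} follows from Proposition \ref{prop1}. The only ingredient needed beyond Proposition \ref{Prop1} is the fact, recalled in Subsection 2.5, that the $RG$-module $B(G,R)$ is $R$-free; in particular it is projective as an $R$-module.

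Concretely, I would argue as follows. Let $M$ be any $RG$-module in ${\tt WGProj}(RG)$. Since $B(G,R)$ is $R$-projective, Proposition \ref{Prop1} applies with $N=B(G,R)$ and gives that the diagonal $RG$-module $M\otimes_R B(G,R)$ belongs to ${\tt PGF}(RG)$. Unwinding the definition of the class $\mathscr{X}_{B,{\tt PGF}}$, this is precisely the statement that $M\in\mathscr{X}_{B,{\tt PGF}}$. As $M$ was arbitrary, the inclusion ${\tt WGProj}(RG)\subseteq\mathscr{X}_{B,{\tt PGF}}$ follows.

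I do not anticipate any obstacle: all the substance --- the transfinite induction over the $\textsc{\textbf{h}}_{\alpha}\mathfrak{X}$-filtration, the induction on the cardinality of $G$, and the stability properties of PGF modules under direct sums, induction and extensions --- has already been carried out in the proof of Proposition \ref{Prop1}, and here one merely specializes the module $N$ to $B(G,R)$.
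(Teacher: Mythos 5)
Your argument is correct and coincides with the paper's own proof: since $B(G,R)$ is $R$-free (hence $R$-projective), Proposition \ref{Prop1} applied with $N=B(G,R)$ immediately gives $M\otimes_R B(G,R)\in {\tt PGF}(RG)$ for every $M\in{\tt WGProj}(RG)$, which is exactly membership in $\mathscr{X}_{B,{\tt PGF}}$.
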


\begin{proof}Since the $RG$-module $B(G,R)$ is $R$-free, this is an immediate consequence of Proposition \ref{Prop1}.\end{proof}

\begin{Proposition}\label{Ppst} Let $R$ be a commutative ring such that $\textrm{sfli}(R)<\infty$ and $G$ be an $\textsc{\textbf{lh}}\mathfrak{X}$-group. Then, 
	$\mathscr{X}_{B,{\tt PGF}}\subseteq {\tt PGF}(RG)$.
\end{Proposition}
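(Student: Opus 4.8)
The plan is to mimic exactly the strategy of the proof of Proposition \ref{theor2}, replacing ``Gorenstein flat'' with ``PGF'' and the stability theorem \cite[Theorem 1.2]{BK} with its PGF analogue \cite[Theorem 5.7]{KS}. So let $M \in \mathscr{X}_{B,{\tt PGF}}$, set $B = B(G,R)$ and $\overline{B} = \overline{B}(G,R)$, and put $V_i = \overline{B}^{\otimes i} \otimes_R B$ for $i \geq 0$ as before. Using the $R$-split short exact sequence $0 \to R \to B \to \overline{B} \to 0$, one obtains by tensoring with $M$ and splicing an exact sequence
\begin{equation*}
0 \to M \xrightarrow{\alpha} M\otimes_R V_0 \to M\otimes_R V_1 \to M\otimes_R V_2 \to \cdots,
\end{equation*}
and each $M\otimes_R V_i \cong (M\otimes_R B)\otimes_R \overline{B}^{\otimes i}$ is PGF over $RG$: indeed $M\otimes_R B \in {\tt PGF}(RG) \subseteq {\tt WGProj}(RG)$ and $\overline{B}^{\otimes i}$ is $R$-free, so Proposition \ref{Prop1} applies. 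Choosing an $RG$-\emph{projective} resolution $\textbf{Q} = \cdots \to Q_1 \to Q_0 \xrightarrow{\beta} M \to 0$ (here we must use projectives rather than flats, since PGF modules are defined via complexes of projectives) and splicing with the sequence above gives an acyclic complex of projective-and-PGF modules
\begin{equation*}
\mathfrak{P} = \cdots \to Q_1 \to Q_0 \xrightarrow{\alpha\beta} M\otimes_R V_0 \to M\otimes_R V_1 \to \cdots
\end{equation*}
with $M$ as a syzygy. Wait --- actually $\mathfrak{P}$ is a complex of \emph{PGF} modules, not projectives, so the relevant stability result is \cite[Theorem 5.7]{KS}: it suffices to show $I\otimes_{RG}\mathfrak{P}$ is acyclic for every injective $RG$-module $I$, and then $M \in {\tt PGF}^{(2)}_{\mathcal{I}}(RG) = {\tt PGF}(RG)$.

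To verify the acyclicity of $I\otimes_{RG}\mathfrak{P}$, I would argue as in Proposition \ref{theor2}: the $R$-split sequence $0 \to R \to B \to \overline{B} \to 0$ tensored with $I$ yields an $RG$-split exact sequence $0 \to I \to B\otimes_R I \to \overline{B}\otimes_R I \to 0$, so it is enough to show $(B\otimes_R I)\otimes_{RG}\mathfrak{P} \cong I\otimes_{RG}(\mathfrak{P}\otimes_R B)$ is acyclic. Now $\mathfrak{P}\otimes_R B$ is acyclic (as $B$ is $R$-free), and each of its terms is PGF over $RG$: the terms $Q_j\otimes_R B$ are projective, and $(M\otimes_R V_i)\otimes_R B \cong (M\otimes_R B \otimes_R \overline{B}^{\otimes i})\otimes_R B$ is PGF by Proposition \ref{Prop1} again. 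Moreover, since $\textbf{Q}\otimes_R B$ is a projective resolution of the PGF module $M\otimes_R B$, every syzygy of $\mathfrak{P}\otimes_R B$ is PGF (PGF modules are closed under kernels of epimorphisms between projectives sitting over a PGF module, and the right half consists of honest PGF modules). Since ${\rm Tor}^{RG}_1(I, -)$ vanishes on PGF modules (they are in particular Gorenstein flat, and $I$ is injective, and we are over a ring with $\mathrm{sfli}(RG) < \infty$, so one can also invoke \cite[Theorem 1.2]{BK}-style vanishing), breaking $\mathfrak{P}\otimes_R B$ into short exact sequences and applying $I\otimes_{RG} -$ shows the resulting complex is acyclic.

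The main obstacle, as in the flat case, is the passage from ``$I\otimes_{RG}\mathfrak{P}$ acyclic for all injective $I$'' to ``$M$ is PGF'': this is precisely where the stability machinery \cite[Theorem 5.7]{KS} for PGF modules is essential, and one must be careful that the complex $\mathfrak{P}$ really does consist of PGF modules (which requires Proposition \ref{Prop1} on the diagonal tensor product, the genuinely new input of Sections 3--4) and that the syzygies of $\mathfrak{P}\otimes_R B$ are PGF so that the $\mathrm{Tor}$-vanishing applies termwise. A secondary point requiring attention is simply bookkeeping: one must use a projective resolution of $M$ rather than a flat one throughout, since PGF-ness is a condition on complexes of projectives, but this causes no difficulty. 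With these observations the argument is a formal transcription of the proof of Proposition \ref{theor2}.
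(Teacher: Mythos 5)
Your argument is correct and follows essentially the same route as the paper's proof: the same splicing with $V_i=\overline{B}^{\otimes i}\otimes_R B$, the same appeal to Proposition \ref{Prop1} for PGF-ness of the terms, the reduction via the $RG$-split sequence $0\to I\to B\otimes_R I\to \overline{B}\otimes_R I\to 0$ to the acyclicity of $I\otimes_{RG}(\mathfrak{P}\otimes_R B)$, the observation that all syzygies of $\mathfrak{P}\otimes_R B$ are PGF (the paper cites \cite[Proposition 2.1]{KS} for the left half), the vanishing of $\mathrm{Tor}^{RG}_1(I,-)$ on PGF modules, and finally \cite[Theorem 5.7]{KS}. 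Only your parenthetical appeal to $\mathrm{sfli}(RG)<\infty$ is unwarranted (this finiteness is not available here), but it is superfluous since the vanishing of $\mathrm{Tor}^{RG}_1(I,-)$ already follows from PGF modules being Gorenstein flat.
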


\begin{proof}Let $B=B(G,R)$, $\overline{B}=\overline{B}(G,R)$ and consider an $RG$-module $M$ such that the $RG$-module $M\otimes_R B$ is PGF. We also let $V_i=\overline{B}^{\otimes i}\otimes_R B$ for every $i\geq 0$, where $\overline{B}^{\otimes 0}=R$. Since the short exact sequence of $RG$-modules $0\rightarrow R \rightarrow B \rightarrow \overline{B}\rightarrow 0$ is $R$-split, we obtain for every $i\geq 0$ a short exact sequence of $RG$-modules of the form $$0\rightarrow M\otimes_R\overline{B}^{\otimes i}\rightarrow M\otimes_R V_i \rightarrow M\otimes_R \overline{B}^{\otimes i+1}\rightarrow 0.$$ Then, the splicing of the above short exact sequences for every $i\geq 0$ yields an exact sequence of the form
	\begin{equation}\label{Eeq1}
		0\rightarrow M \xrightarrow{\alpha} M\otimes_R V_0 \rightarrow M\otimes_R V_1 \rightarrow M\otimes_R V_2 \rightarrow \cdots.
	\end{equation} 
	Since the $RG$-module $M\otimes_R B$ is PGF and $\overline{B}$ is $R$-projective, we obtain that the $RG$-module $M\otimes_R V_i\cong (M\otimes_R B)\otimes_R \overline{B}^{\otimes i}$ is PGF for every $i\geq 0$, by Proposition \ref{Prop1}. We also consider an $RG$-projective resolution of $M$
	\begin{equation*}
		\textbf{P}=\cdots \rightarrow P_2 \rightarrow P_1 \rightarrow P_0 \xrightarrow{\beta} M \rightarrow 0.
	\end{equation*} 
	Splicing the resolution $\textbf{P}$ with the exact sequence (\ref{Eeq1}), we obtain an acyclic complex of PGF $RG$-modules 
	\begin{equation*}
		\mathfrak{P}=\cdots \rightarrow P_2 \rightarrow P_1 \rightarrow P_0 \xrightarrow{\alpha \beta} M\otimes_R V_0 \rightarrow M\otimes_R V_1 \rightarrow M\otimes_R V_2 \rightarrow \cdots
	\end{equation*} 
	which has syzygy the $RG$-module $M$. It suffices to prove that the complex $I\otimes_{RG}\mathfrak{P}$ is acyclic for every injective $RG$-module $I$. Using \cite[Theorem 5.7]{KS} we will then obtain that the $RG$-module $M$ is PGF. Let $I$ be an injective $RG$-module. Then, the $R$-split short exact sequence of $RG$-modules $0\rightarrow R \rightarrow B \rightarrow \overline{B}\rightarrow 0$ yields an induced exact sequence of $RG$-modules with diagonal action $0\rightarrow I\rightarrow B \otimes_{R}I\rightarrow \overline{B}\otimes_{R} I\rightarrow 0$ which is $RG$-split. Thus, it suffices to prove that the complex $(B\otimes_{R}I)\otimes_{RG}\mathfrak{P}$ is acyclic. Since $B$ is $R$-projective, we obtain that the acyclic complex $\textbf{P}\otimes_R B$ is a projective resolution of the PGF $RG$-module $M\otimes_{R}B$. Therefore, every syzygy module of $\textbf{P}\otimes_R B$ is also a PGF $RG$-module (see \cite[Proposition 2.1]{KS}). Moreover, the $RG$-module $(M\otimes_R B)\otimes_R \overline{B}^{\otimes i}\cong (M\otimes_R \overline{B}^{\otimes i})\otimes_R B$ is PGF for every $i\geq 0$. Consequently, every syzygy module of the acyclic complex
	\begin{equation*}
		\mathfrak{P}\otimes_R B =\cdots\rightarrow  P_1\otimes_R B\rightarrow  P_0\otimes_R B \rightarrow  M\otimes_R V_0 \otimes_R B\rightarrow  M\otimes_R V_1 \otimes_R B\rightarrow \cdots
	\end{equation*} is a PGF $RG$-module. As the functor $\textrm{Tor}^{RG}_1 (I,\_\!\_)$ vanishes on PGF modules, we conclude that the complex $(B\otimes_{R}I)\otimes_{RG}\mathfrak{P}\cong I\otimes_{RG}(\mathfrak{P}\otimes_{R}B)$ is acyclic, as needed.\end{proof}

\begin{Theorem}\label{Cora}Let $R$ be a commutative ring such that $\textrm{sfli}(R)<\infty$ and $G$ be an $\textsc{\textbf{lh}}\mathfrak{X}$-group. Then, $\mathscr{X}_{B,{\tt GProj}}=\mathscr{X}_{B,{\tt PGF}}={\tt PGF}(RG)={\tt WGProj}(RG)={\tt GProj}(RG)$.
\end{Theorem}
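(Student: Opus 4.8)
The plan is to close a cycle of inclusions among the five classes, the substantial work having already been carried out in Corollary \ref{Theo1} and Proposition \ref{Ppst}. First I would record the containments available for free: every Gorenstein projective $RG$-module is a syzygy of a totally acyclic --- in particular acyclic --- complex of projectives, so ${\tt GProj}(RG)\subseteq {\tt WGProj}(RG)$ (already observed in Subsection 2.7), and by \cite[Theorem 4.4]{SS} one has ${\tt PGF}(RG)\subseteq {\tt GProj}(RG)$. Chaining these with the two results of this section, namely ${\tt WGProj}(RG)\subseteq \mathscr{X}_{B,{\tt PGF}}$ (Corollary \ref{Theo1}) and $\mathscr{X}_{B,{\tt PGF}}\subseteq {\tt PGF}(RG)$ (Proposition \ref{Ppst}), yields
\[
{\tt WGProj}(RG)\subseteq \mathscr{X}_{B,{\tt PGF}}\subseteq {\tt PGF}(RG)\subseteq {\tt GProj}(RG)\subseteq {\tt WGProj}(RG),
\]
so the four classes $\mathscr{X}_{B,{\tt PGF}}$, ${\tt PGF}(RG)$, ${\tt GProj}(RG)$ and ${\tt WGProj}(RG)$ all coincide.

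It then remains to identify $\mathscr{X}_{B,{\tt GProj}}$ with this common class, which I would do by squeezing. Since ${\tt PGF}(RG)\subseteq {\tt GProj}(RG)$, it is immediate from the definitions that $\mathscr{X}_{B,{\tt PGF}}\subseteq \mathscr{X}_{B,{\tt GProj}}$. For the reverse inclusion, take $M\in \mathscr{X}_{B,{\tt GProj}}$, so that $M\otimes_R B(G,R)\in {\tt GProj}(RG)$; by the equality just established this module lies in ${\tt PGF}(RG)$, hence $M\in \mathscr{X}_{B,{\tt PGF}}$. Therefore $\mathscr{X}_{B,{\tt GProj}}=\mathscr{X}_{B,{\tt PGF}}$, which finishes the proof.

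At this stage there is essentially no obstacle left: the genuine difficulties were isolated earlier, in the transfinite induction over the $\textsc{\textbf{lh}}\mathfrak{X}$-hierarchy combined with the dimension-shifting argument against $B(G,R)$ carried out in Propositions \ref{Prop1} and \ref{Ppst}, and in the containment ${\tt PGF}(R)\subseteq {\tt GProj}(R)$ of \cite[Theorem 4.4]{SS}. The content of the present theorem is simply the observation that the resulting cycle of inclusions passes through ${\tt WGProj}(RG)$, which is precisely what forces Gorenstein projectivity, PGF-ness and weak Gorenstein projectivity over $RG$ to agree even though $\textrm{sfli}(RG)$ itself need not be finite.
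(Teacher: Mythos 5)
Your proposal is correct and follows essentially the same route as the paper: chain ${\tt WGProj}(RG)\subseteq\mathscr{X}_{B,{\tt PGF}}\subseteq{\tt PGF}(RG)\subseteq{\tt GProj}(RG)\subseteq{\tt WGProj}(RG)$ using Corollary \ref{Theo1}, Proposition \ref{Ppst} and \cite[Theorem 4.4]{SS}, then deduce $\mathscr{X}_{B,{\tt GProj}}=\mathscr{X}_{B,{\tt PGF}}$ from the resulting equality ${\tt GProj}(RG)={\tt PGF}(RG)$. Your spelled-out squeeze for the last equality is exactly what the paper leaves as ``follows.''
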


\begin{proof}By Corollary \ref{Theo1} and Proposition \ref{Ppst}, we have the inclusions ${\tt WG Proj}(RG)\subseteq\mathscr{X}_{B,{\tt PGF}}\subseteq{\tt PGF}(RG)$. Moreover, ${\tt PGF}(RG)\subseteq {\tt GProj}(RG)$ by \cite[Theorem 4.4]{SS} and the inclusion ${\tt GProj}(RG)$ $\subseteq{\tt WGProj}(RG)$ is clear. We deduce that $\mathscr{X}_{B,{\tt PGF}}={\tt PGF}(RG)={\tt WGProj}(RG)={\tt GProj}(RG)$ and the equality $\mathscr{X}_{B,{\tt GProj}}=\mathscr{X}_{B,{\tt PGF}}$ follows.
\end{proof}

\begin{Remark} \rm According to Theorem \ref{Cora}, it is possible that the equality ${\tt WGProj}(RG)={\tt GProj}(RG)$ holds over a commutative ring $R$ of infinite global dimension 
even if these classes are not equal to the class $\mathscr{X}_{B,{\tt Proj}}=\{M\in \textrm{Mod}(RG): \, M\otimes_R B(G,R)\in {\tt Proj}(RG)\}$ of Benson's cofibrants. However, \cite[Corollary 5.5]{Bis1} and \cite[Theorem 8.4]{St1} yield the equalities $\mathscr{X}_{B,{\tt Proj}}={\tt PGF}(RG)={\tt WGProj}(RG)={\tt GProj}(RG)$ over any commutative ring $R$ of finite global dimension and any $\textsc{\textbf{lh}}\mathfrak{F}$-group or a group of type $\Phi_R$ 
(see also \cite[Conjecture A]{DT}).
\end{Remark}

%\begin{Conjecture}	Let $R$ be a commutative ring such that $\textrm{sfli}(R)<\infty$ and $G$ be a group. Then, ${\tt GProj}(RG)={\tt WGProj}(RG)$.\end{Conjecture}

%\begin{Remark}\rm We note that Theorem \ref{Cora} implies that for every commutative ring $R$ such that $\textrm{sfli}R<\infty$ and every $\textsc{\textbf{lh}}\mathfrak{F}$-group $G$, the class $\mathscr{X}_{B,{\tt PGF}}$ coincides with the class $\mathscr{X}_{B,{\tt GProj}}=\{M\in \textrm{Mod}(RG): \, M\otimes_R B(G,R)\in {\tt GProj}(RG)\}$.\end{Remark}

\begin{Corollary}\label{nionia}Let $R$ be a commutative ring such that $\textrm{sfli}(R)<\infty$ and $G$ be an $\textsc{\textbf{lh}}\mathfrak{X}$-group. Then, ${\tt GProj}(RG)\subseteq {\tt GFlat}(RG)$.
\end{Corollary}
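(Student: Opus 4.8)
The plan is to deduce ${\tt GProj}(RG)\subseteq{\tt GFlat}(RG)$ directly from the two characterization theorems already proved in this section and in Section~3. First I would recall that Theorem~\ref{Cora} gives ${\tt GProj}(RG)={\tt PGF}(RG)$ under the standing hypotheses (namely $\textrm{sfli}(R)<\infty$ and $G$ an $\textsc{\textbf{lh}}\mathfrak{X}$-group). So it suffices to observe that every PGF $RG$-module is Gorenstein flat. But this is immediate and does not even require the group-theoretic hypotheses: by definition a PGF module is a syzygy of an acyclic complex $\mathbf{P}$ of projective modules such that $I\otimes_{RG}\mathbf{P}$ is acyclic for every injective $RG$-module $I$; since projective modules are flat, $\mathbf{P}$ is in particular an acyclic complex of flat modules with $I\otimes_{RG}\mathbf{P}$ acyclic for all injective $I$, i.e. a totally acyclic complex of flat modules, so its syzygies are Gorenstein flat. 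This is exactly the inclusion ${\tt PGF}(RG)\subseteq{\tt GFlat}(RG)$ recorded in Subsection~2.1.

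Alternatively — and this is the route that better fits the narrative of the paper, which advertises the $B(G,R)$-characterizations as the central tool — I would argue through the classes $\mathscr{X}_{B,\bullet}$. By Theorem~\ref{Cora} we have ${\tt GProj}(RG)=\mathscr{X}_{B,{\tt PGF}}$, and by Theorem~\ref{cora} we have ${\tt GFlat}(RG)=\mathscr{X}_{B,{\tt GFlat}}$. Now if $M\in{\tt GProj}(RG)$, then $M\otimes_R B(G,R)\in{\tt PGF}(RG)\subseteq{\tt GFlat}(RG)$, so $M\in\mathscr{X}_{B,{\tt GFlat}}={\tt GFlat}(RG)$. Either phrasing yields the corollary in two or three lines; I would probably present the short direct argument (PGF $\Rightarrow$ Gorenstein flat) and note the characterizations as the reason the classes of Gorenstein projective and PGF modules coincide here.

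There is no real obstacle: the entire content has been front-loaded into Theorem~\ref{Cora} (whose hard part, in turn, is Proposition~\ref{Prop1} on tensor products together with the stability theorem \cite[Theorem 5.7]{KS}). The only point worth a moment's care is making sure the hypotheses line up — the corollary is stated with "$\textrm{sfli}(R)<\infty$" rather than "finite Gorenstein weak global dimension", but these are equivalent for commutative $R$ by the discussion in Subsection~2.3, so Theorem~\ref{Cora} applies verbatim. Thus the proof is simply: by Theorem~\ref{Cora}, ${\tt GProj}(RG)={\tt PGF}(RG)$, and ${\tt PGF}(RG)\subseteq{\tt GFlat}(RG)$ always holds; hence ${\tt GProj}(RG)\subseteq{\tt GFlat}(RG)$.
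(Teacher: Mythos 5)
Your proposal is correct and matches the paper's proof: the paper also deduces the corollary directly from Theorem \ref{Cora} together with the always-valid inclusion ${\tt PGF}(RG)\subseteq{\tt GFlat}(RG)$. The alternative route via the classes $\mathscr{X}_{B,{\tt PGF}}$ and $\mathscr{X}_{B,{\tt GFlat}}$ is fine but adds nothing beyond the two-line argument.
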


\begin{proof}This is a direct consequence of Theorem \ref{Cora}, since ${\tt PGF}(RG)\subseteq {\tt GFlat}(RG)$.
\end{proof}

\begin{Corollary}Let $R$ be a commutative ring such that $\textrm{sfli}(R)<\infty$ and $G$ be an $\textsc{\textbf{lh}}\mathfrak{X}$-group. Then, for every $RG$-module $M$ we have  $\textrm{Gfd}_{RG}M\leq \textrm{Gpd}_{RG}M =\textrm{PGF-dim}_{RG}M$.
\end{Corollary}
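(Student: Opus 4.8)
The plan is to deduce this from Theorem \ref{Cora} together with standard dimension-shifting. The statement claims $\textrm{Gfd}_{RG}M\leq \textrm{Gpd}_{RG}M =\textrm{PGF-dim}_{RG}M$ for every $RG$-module $M$, under the running hypotheses $\textrm{sfli}(R)<\infty$ and $G\in\textsc{\textbf{lh}}\mathfrak{X}$. The key input is that Theorem \ref{Cora} gives the equality of classes ${\tt PGF}(RG)={\tt GProj}(RG)$, and in general ${\tt PGF}(RG)\subseteq{\tt GFlat}(RG)$ (Corollary \ref{nionia}).

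First I would prove $\textrm{Gpd}_{RG}M=\textrm{PGF-dim}_{RG}M$. Since ${\tt PGF}(RG)\subseteq{\tt GProj}(RG)$ always holds, any PGF resolution of $M$ is in particular a Gorenstein projective resolution, so $\textrm{Gpd}_{RG}M\leq\textrm{PGF-dim}_{RG}M$. For the reverse inequality, suppose $\textrm{Gpd}_{RG}M=n<\infty$ (if it is infinite there is nothing to prove). Take an $RG$-projective resolution of $M$ and let $K$ be its $n$-th syzygy; by the standard characterization of Gorenstein projective dimension (dimension shifting, using that $\textrm{Gpd}_{RG}M\leq n$), the module $K$ is Gorenstein projective. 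By Theorem \ref{Cora}, ${\tt GProj}(RG)={\tt PGF}(RG)$, so $K$ is PGF. Hence we have exhibited a PGF resolution $0\to K\to P_{n-1}\to\cdots\to P_0\to M\to 0$ of length $n$, giving $\textrm{PGF-dim}_{RG}M\leq n=\textrm{Gpd}_{RG}M$. Thus the two dimensions agree.

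Next I would prove $\textrm{Gfd}_{RG}M\leq\textrm{Gpd}_{RG}M$. If $\textrm{Gpd}_{RG}M=\infty$ there is nothing to show, so assume it is finite and equals $\textrm{PGF-dim}_{RG}M=n$ by the previous paragraph. Then there is a PGF resolution of $M$ of length $n$; since ${\tt PGF}(RG)\subseteq{\tt GFlat}(RG)$, this is also a Gorenstein flat resolution of $M$ of length $n$, whence $\textrm{Gfd}_{RG}M\leq n=\textrm{Gpd}_{RG}M$. Alternatively one can cite directly the general fact (valid over any ring, e.g. from \cite{DE}) that $\textrm{Gfd}_RM\leq\textrm{PGF-dim}_RM$, applied to $RG$; either route works.

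I do not expect a genuine obstacle here: the corollary is essentially a formal consequence of the class equality ${\tt GProj}(RG)={\tt PGF}(RG)$ from Theorem \ref{Cora} plus the elementary inclusions ${\tt PGF}(RG)\subseteq{\tt GProj}(RG)\cap{\tt GFlat}(RG)$ and routine dimension-shifting. The only minor point to be careful about is the case of infinite dimension, which is handled trivially, and ensuring that the syzygy characterization of $\textrm{Gpd}$ is being applied correctly (i.e. that $\textrm{Gpd}_{RG}M\leq n$ forces the $n$-th syzygy in an honest projective resolution to be Gorenstein projective).
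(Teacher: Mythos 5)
Your proposal is correct and is essentially the argument the paper intends: the corollary is stated as an immediate consequence of Theorem \ref{Cora} (the class equality ${\tt GProj}(RG)={\tt PGF}(RG)$), combined with the inclusion ${\tt PGF}(RG)\subseteq{\tt GFlat}(RG)$ and the standard syzygy characterizations of the dimensions, exactly as you do. Your handling of the infinite-dimension case and the dimension-shifting step (the $n$-th syzygy of a projective resolution being Gorenstein projective, hence PGF) is the right justification and matches the paper's implicit reasoning.
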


\begin{Corollary}Let $R$ be a commutative ring of finite Gorenstein weak global dimension. Then, Gorenstein projectivity is closed under subgroups that are in $\textsc{\textbf{lh}}\mathfrak{X}$.
\end{Corollary}

\begin{proof}Let $M$ be a Gorenstein projective $RG$-module and $H$ be a subgroup of $G$ such that $H\in\textsc{\textbf{lh}}\mathfrak{X}$. Then, the $RH$-module $\textrm{Res}_H^G(M)$ is weak Gorenstein projective and hence Theorem \ref{Cora} implies that $\textrm{Res}_H^G(M)\in{\tt GProj}(RH)$.\end{proof}

\begin{Remark}\label{rem46}\rm Let $R$ be a commutative ring such that $\textrm{sfli}(R)<\infty$ and $G$ be a group in the class $\mathfrak{X}$, i.e. there exists a weak characteristic module $A$ for $G$ over $R$. We also consider an $RG$-module $M$ such that the $RG$-module $M\otimes_R A$ is PGF. Then, $M$ is a PGF $RG$-module. 
	
	Indeed, there exists an $R$-split $RG$-short exact sequence $0\rightarrow R \rightarrow A \rightarrow \overline{A}\rightarrow 0$, where the $RG$-modules $A,\overline{A}$ are $R$-projectives (this follows from \cite[Theorem 3.14(v)]{St}). Following step by step the proof of Proposition \ref{Ppst}, we construct an acyclic complex of PGF modules 
	\begin{equation*}
		\mathfrak{P'}=\cdots \rightarrow P_2' \rightarrow P_1' \rightarrow P_0' \rightarrow M\otimes_R V_0' \rightarrow M\otimes_R V_1' \rightarrow M\otimes_R V_2' \rightarrow \cdots,
	\end{equation*} where $V_i'={\overline{A}}^{\otimes i}\otimes_R A$, for every $i\geq 0$, and has syzygy the $RG$-module $M$. Using the $R$-split $RG$-short exact sequence $0\rightarrow R \rightarrow A \rightarrow \overline{A} \rightarrow 0 $ and \cite[Theorem 5.7]{KS}, it suffices to show that the complex $I\otimes_{RG}(\mathfrak{P'}\otimes_{R}A)$ is acyclic for every injective $RG$-module $I$. This follows exactly as in the proof of Proposition \ref{Ppst}, since every syzygy module of $\mathfrak{P'}\otimes_{R}A$ is PGF.
\end{Remark}

\begin{Remark}\rm Let $R$ be a commutative ring such that $\textrm{sfli}(R)<\infty$ and $G$ be a group in the class $\mathfrak{X}$. We also consider an $RG$-module $M$ such that the $RG$-module $M\otimes_R A$ is Gorenstein projective. Then, $M$ is a Gorenstein projective $RG$-module. This follows from Remark \ref{rem46} and Theorem \ref{Cora}.
\end{Remark}

The following result provides a generalization of \cite[Theorem 3.11]{Bis1}.
\begin{Corollary} Let $R$ be a commutative ring of finite weak global dimension and $G$ be a group of type $\Phi_R$. Then, $\mathscr{X}_{B,{\tt Proj}}={\tt PGF}(RG)={\tt WProj}(RG)={\tt GProj}(RG)$, where $\mathscr{X}_{B,{\tt Proj}}=\{M\in \textrm{Mod}(RG): \, M\otimes_R B(G,R)\in {\tt Proj}(RG)\}$ is the class of Benson's cofibrants.
\end{Corollary}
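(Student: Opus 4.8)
The plan is to derive this from Theorem~\ref{Cora}, following the same pattern by which the corresponding statement for Gorenstein flat modules was obtained from Theorem~\ref{cora}. First observe that a group $G$ of type $\Phi_R$ belongs to $\textsc{\textbf{lh}}\mathfrak{X}$ and that $\textrm{sfli}(R)\leq\textrm{wgl.dim}(R)<\infty$, so Theorem~\ref{Cora} applies and already gives $\mathscr{X}_{B,{\tt PGF}}={\tt PGF}(RG)={\tt WGProj}(RG)={\tt GProj}(RG)$. Since every projective module is PGF we have $\mathscr{X}_{B,{\tt Proj}}\subseteq\mathscr{X}_{B,{\tt PGF}}$ for free, so the whole corollary reduces to proving the reverse inclusion $\mathscr{X}_{B,{\tt PGF}}\subseteq\mathscr{X}_{B,{\tt Proj}}$, i.e.\ to upgrading ``PGF'' to ``projective'' for $M\otimes_R B(G,R)$.

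So let $M\in\mathscr{X}_{B,{\tt PGF}}$. By Theorem~\ref{Cora} we have $M\in{\tt WGProj}(RG)$; restricting the defining acyclic complex of projective $RG$-modules to $R$ shows $M\in{\tt WGProj}(R)={\tt GProj}(R)={\tt PGF}(R)\subseteq{\tt GFlat}(R)$, and since $\textrm{wgl.dim}(R)<\infty$ forces every module to have finite flat dimension, hence ${\tt GFlat}(R)={\tt Flat}(R)$, we conclude that $M$ is flat as an $R$-module. On the other hand, $G$ being of type $\Phi_R$ gives $\textrm{pd}_{RG}B(G,R)<\infty$, hence $\textrm{fd}_{RG}B(G,R)<\infty$; tensoring a finite flat resolution of $B(G,R)$ over $RG$ with the $R$-flat module $M$ over $R$ preserves exactness, and each term stays flat over $RG$ because the diagonal tensor product of a flat $RG$-module with an $R$-flat $RG$-module is flat. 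Therefore $\textrm{fd}_{RG}(M\otimes_R B(G,R))<\infty$.

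Finally I would combine the two facts now available about the $RG$-module $M\otimes_R B(G,R)$, namely that it is PGF and of finite flat dimension, via the identity ${\tt PGF}(RG)\cap\overline{{\tt Flat}}(RG)={\tt Proj}(RG)$ (a PGF module of finite flat dimension is projective — the projective counterpart of the identity ${\tt GFlat}(RG)\cap\overline{{\tt Flat}}(RG)={\tt Flat}(RG)$ from \cite[Lemma~2.4]{KS} used in the Gorenstein flat case; see also \cite{DE}). This yields $M\otimes_R B(G,R)\in{\tt Proj}(RG)$, i.e.\ $M\in\mathscr{X}_{B,{\tt Proj}}$, which together with the first paragraph finishes the proof. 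The $\textsc{\textbf{lh}}\mathfrak{X}$-induction is not an obstacle here, since it is entirely absorbed into Theorem~\ref{Cora}; the only point that is not routine bookkeeping with the inclusions among ${\tt Proj}$, ${\tt PGF}$, ${\tt GProj}$, ${\tt GFlat}$ and ${\tt Flat}$ is this last identity, and that is the step where I would be careful to cite the correct statement about PGF dimension.
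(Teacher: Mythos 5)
Your proposal is correct and follows essentially the same route as the paper: reduce via Theorem \ref{Cora} to the inclusion $\mathscr{X}_{B,{\tt PGF}}\subseteq\mathscr{X}_{B,{\tt Proj}}$, use finiteness of $\textrm{wgl.dim}(R)$ to see that $M$ is $R$-flat, deduce $\textrm{fd}_{RG}(M\otimes_R B(G,R))<\infty$ from $\textrm{pd}_{RG}B(G,R)<\infty$, and conclude with the identity ${\tt PGF}(RG)\cap\overline{{\tt Flat}}(RG)={\tt Proj}(RG)$, which the paper cites as \cite[Lemma 2.1]{St}. The only cosmetic difference is that you pass through ${\tt GFlat}(R)={\tt Flat}(R)$ rather than ${\tt WGFlat}(R)$ for the $R$-flatness of $M$, which is an equivalent bookkeeping step.
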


\begin{proof}In view of Theorem \ref{Cora}, it suffices to show that $\mathscr{X}_{B,{\tt PGF}}\subseteq\mathscr{X}_{B,{\tt Proj}}$. Let $M\in \mathscr{X}_{B,{\tt PGF}}$. Then, $M\in {\tt WGProj}(RG)\subseteq {\tt WGFlat}(R)$, and hence the finiteness of $\textrm{wgl.dim}(R)$ implies that $M$ is $R$-flat. Since $\textrm{fd}_{RG} B(G,R)<\infty$, we obtain that $\textrm{fd}_{RG}M\otimes_R B(G,R)<\infty$. We conclude that $M\otimes_R B(G,R)\in {\tt PGF}(RG)\cap \overline{{\tt Flat}}(RG)={\tt Proj}(RG)$ (see \cite[Lemma 2.1]{St}).
\end{proof}

\section{Gorenstein injective modules over $\textsc{\textbf{h}}\mathfrak{Y}$-groups}We consider a commutative ring $R$ such that $\textrm{spli}(R)<\infty$ and an $\textsc{\textbf{\textsc{h}}}\mathfrak{Y}$-group $G$. Our goal in this section is to give a characterization of the class ${\tt GInj}(RG)$ in terms of the $RG$-module $B(G,R)$. Moreover, under these conditions, we conclude that the class ${\tt GInj}(RG)$ coincides with the class ${\tt WGInj}(RG)$. 

\begin{Lemma}\label{Illem45}Let $R$ be a commutative ring, $G$ be a group and $H$ be a subgroup of $G$. Then, for every Gorenstein injective $RH$-module $M$, the $RG$-module $\textrm{Coind}^G_H M$ is also Gorenstein injective.
\end{Lemma}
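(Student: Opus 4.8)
The statement is the injective/coinduction analogue of Lemma \ref{llem45}, so the plan is to mirror that proof, replacing induction by coinduction, flat modules by injective modules, the functor $I \otimes_{RH} \_\!\_$ by $\textrm{Hom}_{RH}(J,\_\!\_)$ for injective right modules, and the adjunction $\textrm{Ind}^G_H$–$\textrm{Res}^G_H$ by the adjunction $\textrm{Res}^G_H$–$\textrm{Coind}^G_H$. Concretely, let $M$ be a Gorenstein injective $RH$-module, so there is an acyclic complex $\textbf{I}$ of injective $RH$-modules, with $M$ a syzygy, such that $\textrm{Hom}_{RH}(J,\textbf{I})$ is acyclic for every injective right $RH$-module $J$ (equivalently, every injective $RH$-module, using $RH \cong (RH)^{\textrm{op}}$).

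First I would apply $\textrm{Coind}^G_H = \textrm{Hom}_{RH}(RG,\_\!\_)$ to $\textbf{I}$. Since $RG$ is a free, hence projective, right $RH$-module, the functor $\textrm{Coind}^G_H$ is exact and preserves injectivity of $RH$-modules, so $\textrm{Coind}^G_H\textbf{I}$ is an acyclic complex of injective $RG$-modules having $\textrm{Coind}^G_H M$ as a syzygy. Next, to check total acyclicity, let $J$ be an injective $RG$-module; by restriction $\textrm{Res}^G_H J$ is an injective $RH$-module (as $RG$ is projective, in fact free, over $RH$ on both sides). The key point is the natural isomorphism of complexes
\[
\textrm{Hom}_{RG}\bigl(J,\textrm{Coind}^G_H\textbf{I}\bigr) \;\cong\; \textrm{Hom}_{RH}\bigl(\textrm{Res}^G_H J,\textbf{I}\bigr),
\]
which is just the $(\textrm{Res}^G_H,\textrm{Coind}^G_H)$-adjunction applied degreewise and compatibly with differentials. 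The right-hand complex is acyclic because $\textbf{I}$ is a totally acyclic complex of injectives and $\textrm{Res}^G_H J$ is injective over $RH$. Hence the left-hand complex is acyclic, which shows $\textrm{Coind}^G_H\textbf{I}$ is a totally acyclic complex of injective $RG$-modules, and therefore $\textrm{Coind}^G_H M$ is Gorenstein injective.

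I do not expect a serious obstacle here; everything rests on standard adjunction and exactness facts. The only point deserving a moment's care is the claim that $\textrm{Coind}^G_H$ preserves injectivity and the identification of injective right $RH$-modules with injective left $RH$-modules via the anti-isomorphism $RH \cong (RH)^{\textrm{op}}$ recorded in Subsection 2.2; this is what legitimizes using the totally acyclic condition "for every injective right module" against the restricted module $\textrm{Res}^G_H J$. One should also make sure the adjunction isomorphism above is natural in the complex variable so that it is genuinely an isomorphism of complexes, not merely a degreewise isomorphism — but this is automatic from naturality of the adjunction.
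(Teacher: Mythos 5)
Your proof is correct and is essentially the same as the paper's: coinduce the totally acyclic complex of injectives, note that $\textrm{Coind}^G_H$ preserves injectivity and that restriction of an injective $RG$-module is injective over $RH$, and conclude via the adjunction isomorphism $\textrm{Hom}_{RG}(J,\textrm{Coind}^G_H\textbf{I})\cong \textrm{Hom}_{RH}(\textrm{Res}^G_H J,\textbf{I})$. No issues.
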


\begin{proof} Let $M$ be a Gorenstein injective $RH$-module. Then, there exists an acyclic complex of injective $RH$-modules $$\textbf{I}=\cdots \rightarrow I_{2}\rightarrow I_1\rightarrow I_0 \rightarrow I_{-1}\rightarrow \cdots$$ such that $M=\textrm{Im}(I_1 \rightarrow I_0)$ and the complex $\textrm{Hom}_{RH}(J,\textbf{I})$ is exact, whenever $J$ is an injective $RH$-module. Thus, the coinduced complex $$\textrm{Coind}^G_H\textbf{I}=\cdots \rightarrow\textrm{Coind}^G_H I_2 \rightarrow\textrm{Coind}^G_H I_1\rightarrow\textrm{Coind}^G_H I_0 \rightarrow\textrm{Coind}^G_H I_{-1}\rightarrow \cdots$$ is an acyclic complex of injective $RG$-modules and has the $RG$-module $\textrm{Coind}^G_H M$ as syzygy. Since every injective $RG$-module $J$ is restricted to an injective $RH$-module, the isomorphism of complexes $\textrm{Hom}_{RG}(J, \textrm{Coind}^G_H\textbf{I})\cong \textrm{Hom}_{RH}(J,\textbf{I})$ implies that the $RG$-module $\textrm{Coind}^G_H M$ is Gorenstein injective.
\end{proof}

\begin{Proposition}\label{Iprop1}Let $R$ be a commutative ring such that $\textrm{spli}(R)<\infty$ and $G$ be an $\textsc{\textbf{h}}\mathfrak{Y}$-group. Consider a weak Gorenstein injective $RG$-module $M$ and an $RG$-module $N$ which is projective as $R$-module. Then, $\textrm{Hom}_{R}(N,M)\in{\tt G Inj}(RG)$.
\end{Proposition}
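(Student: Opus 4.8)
The plan is to mirror the structure of the proof of Proposition~\ref{Prop1}, replacing induction on $\textsc{\textbf{h}}_{\alpha}\mathfrak{X}$-groups with induction on $\textsc{\textbf{h}}_{\alpha}\mathfrak{Y}$-groups, Gorenstein flatness (resp.\ PGF-ness) with Gorenstein injectivity, induction with coinduction, and tensor products with $\textrm{Hom}_R$-functors throughout. The dual role of ``$N$ flat/projective as $R$-module'' in Proposition~\ref{Prop1} is played here by ``$N$ projective as $R$-module'', which guarantees (by the last sentence of Subsection~2.2) that the $\textrm{Hom}_R$ of an injective $RG$-module along $N$ stays injective, and hence that the $\textrm{Hom}_R(N,\textbf{I})$ of an acyclic complex of injective $RG$-modules is again an acyclic complex of injectives.

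First I would treat the base case $\alpha=0$: if $H$ admits a characteristic module then $\mathrm{spli}(RH)<\infty$ (this is the injective analogue of \cite[Theorem 3.14]{St}; it is exactly the finiteness statement recorded in Subsection~2.4 together with Subsection~2.3), and since $M\in{\tt WGInj}(RG)$ restricts to ${\tt WGInj}(RH)$ there is an acyclic complex $\textbf{I}$ of injective $RH$-modules with $M$ as a syzygy. Applying $\textrm{Hom}_R(N,-)$ gives an acyclic complex of injective $RH$-modules (diagonal action) whose syzygy is $\textrm{Hom}_R(N,M)$, and the finiteness of $\mathrm{spli}(RH)$ forces the complex $\textrm{Hom}_{RH}(J,\textrm{Hom}_R(N,\textbf{I}))$ to be acyclic for every injective $RH$-module $J$ — because over a ring of finite $\mathrm{spli}$ every acyclic complex of injectives is totally acyclic, i.e.\ ${\tt WGInj}(RH)={\tt GInj}(RH)$ as noted in Subsection~2.7. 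Hence $\textrm{Hom}_R(N,M)\in{\tt GInj}(RH)$.

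For the inductive step, let $H\in\textsc{\textbf{h}}_{\alpha}\mathfrak{Y}$, so there is a $\mathbb{Z}$-split resolution $0\to C_r\to\cdots\to C_0\to\mathbb{Z}\to0$ with each $C_i$ a direct sum of permutation modules $\mathbb{Z}[H/H']$, $H'\in\textsc{\textbf{h}}_{\beta}\mathfrak{Y}$, $\beta<\alpha$. Applying $\textrm{Hom}_{\mathbb Z}(-,\textrm{Hom}_R(N,M))$ (and using the usual identification $\textrm{Hom}_{\mathbb Z}(\mathbb Z[H/H'],X)\cong\textrm{Coind}^H_{H'}\textrm{Res}^H_{H'}X$) yields an exact sequence $0\to \textrm{Hom}_R(N,M)\to L_0\to\cdots\to L_r\to0$ with each $L_i$ a product of modules $\textrm{Coind}^H_{H'}\textrm{Res}^H_{H'}\textrm{Hom}_R(N,M)$. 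By the induction hypothesis $\textrm{Res}^H_{H'}\textrm{Hom}_R(N,M)\in{\tt GInj}(RH')$, and Lemma~\ref{Illem45} together with the fact that ${\tt GInj}(RH)$ is closed under products gives $L_i\in{\tt GInj}(RH)$, whence $\textrm{Gid}_{RH}\textrm{Hom}_R(N,M)\le r$. Using $M\in{\tt WGInj}(RG)$ one splices in, on the other side, a coresolution by injectives ending at some $M'\in{\tt WGInj}(RG)$; applying $\textrm{Hom}_R(N,-)$ and running the same estimate on $\textrm{Hom}_R(N,M')$ gives finite Gorenstein injective dimension on both sides of an acyclic injective-coresolution segment, so the injective analogue of the ``$\mathrm{GI}$-closed'' criterion (the dual of \cite[Theorem 2.8]{Ben} / the balance result used via \cite{DE}) forces $\textrm{Hom}_R(N,M)\in{\tt GInj}(RH)$. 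Since $G$ itself is an $\textsc{\textbf{h}}\mathfrak{Y}$-group, the case $H=G$ finishes the proof; note that no direct-limit (countable/uncountable cardinality) argument is needed here, since we assume $G\in\textsc{\textbf{h}}\mathfrak{Y}$ rather than $\textsc{\textbf{lh}}\mathfrak{Y}$.

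The main obstacle I anticipate is the inductive step's final invocation: for Gorenstein flat and PGF modules one has the clean two-sided dimension criterion (``$R$ is $\mathrm{GF}$-closed'', \cite[Proposition 2]{DE}), and for Gorenstein injectivity one must be careful that the analogous statement holds over $RH$ — this is where the hypothesis $\mathrm{spli}(R)<\infty$ (hence $\mathrm{spli}(RH)<\infty$ by \cite[Corollary 2.18]{St1}) is genuinely used, so that ${\tt WGInj}(RH)={\tt GInj}(RH)$ and the Gorenstein injective dimension behaves well under the splicing. A secondary technical point is bookkeeping with products versus sums: coinduction and $\textrm{Hom}_R(N,-)$ produce products rather than coproducts, so one relies on closure of ${\tt GInj}$ under arbitrary products, which is standard. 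Everything else is a routine dualization of Section~4.
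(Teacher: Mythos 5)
Your proposal is correct and follows essentially the same route as the paper's proof: transfinite induction through the $\textsc{\textbf{h}}_{\alpha}\mathfrak{Y}$-hierarchy, a base case using that a characteristic module plus $\textrm{spli}(R)<\infty$ gives $\textrm{spli}$ finite for the group ring, the ($\mathbb{Z}$- resp.\ $R$-)split permutation-module resolution converted by $\textrm{Hom}$ into products of coinduced copies of $\textrm{Hom}_R(N,M)$ (Lemma \ref{Illem45} plus closure of ${\tt GInj}$ under products), and the final splicing with an injective coresolution segment supplied by $M\in{\tt WGInj}(RG)$. The only slight inaccuracy is your anticipated ``obstacle'': the cosyzygy criterion needed at the end (the paper cites \cite[Theorem 2.22]{H1}) is valid over arbitrary rings, so $\textrm{spli}(RH)<\infty$ is not what makes that step work — the $\textrm{spli}$ hypothesis is genuinely used only in the base case $\alpha=0$.
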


\begin{proof}We use transfinite induction on the ordinal number $\alpha$, which is such that $G\in \textsc{\textbf{h}}_{\alpha}\mathfrak{Y}$. If $\alpha=0$, then $G$ admits a characteristic module, and hence \cite[Theorem 2.14]{St1} yields $\textrm{spli}(RG) <\infty$. Since $M$ is a weak Gorenstein injective $RG$-module, there exists an acyclic complex of injective $RG$-modules $$\textbf{I}=\cdots \rightarrow I_{2}\rightarrow I_1\rightarrow I_0 \rightarrow I_{-1}\rightarrow \cdots$$ such that $M=\textrm{Im}(I_1 \rightarrow I_0)$. Since $N$ is $R$-projective, we obtain the exact complex of $RG$-injective modules (with diagonal action) $$\textrm{Hom}_R (N,\textbf{I}) = \cdots \rightarrow \textrm{Hom}_R (N,I_1)\rightarrow \textrm{Hom}_R (N,I_0) \rightarrow \textrm{Hom}_R (N,I_{-1})\rightarrow \cdots,$$ where $\textrm{Hom}_R (N,M)= \textrm{Im}(\textrm{Hom}_R (N,I_1) \rightarrow \textrm{Hom}_R (N,I_0))$. Then, the finiteness of $\textrm{spli}(RG)$ implies that the complex $\textrm{Hom}_{RG} (J,\textrm{Hom}_R (N,\textbf{I}) )$ is acyclic for every injective $RG$-module $J$. We conclude that the $RG$-module $\textrm{Hom}_R (N,M)$ is Gorenstein injective.  
	
	Now we assume that $\textrm{Hom}_R (N,M)$ is Gorenstein injective as $RG'$-module for every $\textsc{\textbf{h}}_{\beta}\mathfrak{Y}$-subgroup $G'$ of $G$ and every $\beta<\alpha$. Since $G\in \textsc{\textbf{h}}_{\alpha}\mathfrak{Y}$, there
	exists a $\mathbb{Z}$-split exact sequence of $\mathbb{Z}G$-modules $$0\rightarrow C_r \rightarrow \cdots \rightarrow C_1 \rightarrow C_0 \rightarrow \mathbb{Z} \rightarrow 0,$$	where each $C_i$
	is a direct sum of permutation $\mathbb{Z}G$-modules of the form $\mathbb{Z}[G/G']$, with $G'$ an $\textsc{\textbf{h}}_{\beta}\mathfrak{Y}$-subgroup of $G$ for some $\beta<\alpha$. We note that the integer $r$ is the dimension of the $G$-CW-complex provided by the definition of $G$ being an $\textsc{\textbf{h}}_{\alpha}\mathfrak{Y}$-group. Tensoring the above exact sequence by $R$ we obtain an $R$-split exact sequence of $RG$-modules of the form
	\begin{equation}\label{Ieqqqq}
		0\rightarrow K_r \rightarrow \cdots \rightarrow K_1 \rightarrow K_0 \rightarrow R \rightarrow 0
	\end{equation}
	such that every $K_i$ is a direct sum of modules of the form $R[G/G']$, with $G'$ an $\textsc{\textbf{h}}_{\beta}\mathfrak{Y}$-subgroup of $G$ for some $\beta<\alpha$.
	Applying the functor $\textrm{Hom}_R (\_\!\_ ,\textrm{Hom}_R(N,M))$ to exact sequence (\ref{Ieqqqq}) we obtain the following exact sequence of $RG$-modules:
	\begin{equation}\label{Iieqqqq}	0\rightarrow \textrm{Hom}_R(N,M) \rightarrow \textrm{Hom}_R (K_0 ,\textrm{Hom}_R(N,M)) \rightarrow \cdots \rightarrow \textrm{Hom}_R (K_r ,\textrm{Hom}_R(N,M)) \rightarrow 0.
	\end{equation} 
	Assuming now that $K_i=\oplus_{j_i} R[G/G'_{j_i}]$, we have the following isomorphisms 
	\begin{align*}\textrm{Hom}_R (K_i ,\textrm{Hom}_R(N,M))&\cong \prod_{j_i}\textrm{Hom}_R (R[G/G'_{j_i}] ,\textrm{Hom}_R(N,M))\\
	&\cong \prod_{j_i}\textrm{Hom}_R (RG\otimes_{RG'_{j_i}}R ,\textrm{Hom}_R(N,M))\\
	&\cong \prod_{j_i}\textrm{Hom}_{RG'_{j_i}} (RG,\textrm{Hom}_R(R,\textrm{Hom}_R(N,M)))\\
%	&\cong \prod_{j_i}\textrm{Hom}_{RG'_{j_i}} (RG,\textrm{Hom}_R(N,M))\\
	&\cong \prod_{j_i}\textrm{Coind}^G_{G'_{j_i}}(\textrm{Hom}_R(N,M))
	\end{align*} for every $i=0,\dots ,r$. 
	From the induction hypothesis and Lemma \ref{Illem45} we deduce that $\textrm{Coind}^G_{G'_{j_i}}(\textrm{Hom}_R(N,M))$ is a Gorenstein injective $RG$-module. Since the class of Gorenstein injective $RG$-modules is closed under arbitrary direct products (see \cite[Theorem 2.6]{H1}), it follows that $\textrm{Hom}_R (K_i ,\textrm{Hom}_R(N,M))$ is a Gorenstein injective $RG$-module for every $i=0,\dots ,r$. Thus, the exact sequence (\ref{Iieqqqq}) yields $\textrm{Gid}_{RG}(\textrm{Hom}_R(N,M))\leq r$. Moreover, $M\in {\tt WGInj}(RG)$, and hence there exists an exact sequence of $RG$-modules of the form $$0\rightarrow M' \rightarrow J_0 \rightarrow J_1 \rightarrow \cdots \rightarrow J_{r-1} \rightarrow M \rightarrow 0,$$ where $J_i$ is injective for every $i=0,1,\dots,r-1$ and $M'\in {\tt WGInj}(RG)$. Since $N$ is $R$-projective, we obtain an exact sequence of $RG$-modules (with diagonal action)
	\begin{equation}\label{Ieqqq}
		0\rightarrow \textrm{Hom}_R(N,M')\rightarrow \textrm{Hom}_R(N,J_0) \rightarrow \cdots \rightarrow \textrm{Hom}_R(N,J_{r-1})\rightarrow \textrm{Hom}_R(N,M) \rightarrow 0,
	\end{equation}where $\textrm{Hom}_R(N,J_i)$ is an injective $RG$-module for every $i=0,1,\dots,r-1$. The same argument as above for the $RG$-module $\textrm{Hom}_R(N,M')$ yields $\textrm{Gid}_{RG}(\textrm{Hom}_R(N,M'))\leq r$ and hence \cite[Theorem 2.22]{H1} implies that $\textrm{Hom}_R(N,M)$ is a Gorenstein injective $RG$-module.\end{proof}
	
	%Let $G$ be an $\textsc{LH}\mathfrak{X}$-group. Then, $G$ can be expressed the filtered union of its finitely generated subgroups $(G_{\lambda})_{\lambda}$, which are all contained in $\textsc{H}\mathfrak{X}$. Since $G_{\lambda}\in \textsc{H}\mathfrak{X}$, the $RG_{\lambda}$-module $M\otimes_R N$ is Gorenstein flat. Invoking Lemma \ref{llem45}, we obtain that the $RG$-module $\textrm{Ind}^G_{G_{\lambda}}(M\otimes_R N)$ is Gorenstein flat as well. Thus, the $RG$-module $M\otimes_R N\cong  {\lim\limits_{\longrightarrow}}_{\lambda}\textrm{Ind}^G_{G_{\lambda}}(M\otimes_R N)$ is Gorenstein flat as direct limit of Gorenstein flat modules (see \cite[Corollary 4.12]{SS}).

\begin{Definition}Let $R$ be a commutative ring and $G$ be a group. We denote by $\mathscr{Y}_{B,{\tt GInj}}$ the class of $RG$-modules $\mathscr{Y}_{B,{\tt GInj}}=\{M\in \textrm{Mod}(RG): \, \textrm{Hom}_R (B(G,R),M)\in {\tt GInj}(RG)\}$.
\end{Definition}

\begin{Corollary}\label{Itheor1}Let $R$ be a commutative ring such that $\textrm{spli}(R)<\infty$ and $G$ be an $\textsc{\textbf{h}}\mathfrak{Y}$-group. Then, ${\tt WG Inj}(RG)\subseteq\mathscr{Y}_{B,{\tt GInj}}$.
\end{Corollary}

\begin{proof}Since the $RG$-module $B(G,R)$ is $R$-free, this is an immediate consequence of Proposition \ref{Iprop1}.
\end{proof}

%\begin{Remark}\rm The existence of a weak characteristic module Proposition \ref{ppst} may be replaced with the assumption that $G$ is an $\textsc{\textbf{lh}}\mathfrak{F}$-group. \end{Remark}

\begin{Proposition}\label{Itheor2}Let $R$ be a commutative ring such that $\textrm{spli}(R)<\infty$ and $G$ be an $\textsc{\textbf{h}}\mathfrak{Y}$-group. Then, $\mathscr{Y}_{B,{\tt GInj}}\subseteq{\tt GInj}(RG)$.
\end{Proposition}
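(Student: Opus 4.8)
The plan is to dualize the proof of Proposition \ref{theor2}, using Proposition \ref{Iprop1} in place of Proposition \ref{prop1} and the stability of Gorenstein injective modules \cite[Theorem 2.1]{Bouch} in place of that of Gorenstein flat modules. Write $B=B(G,R)$, $\overline{B}=\overline{B}(G,R)$ and let $M$ be an $RG$-module with $\textrm{Hom}_R(B,M)\in{\tt GInj}(RG)$. Put $V_i=\overline{B}^{\otimes i}\otimes_R B$ for $i\geq 0$, with $\overline{B}^{\otimes 0}=R$. Applying $\textrm{Hom}_R(\_\!\_,M)$ to the $R$-split short exact sequences $0\to\overline{B}^{\otimes i}\to V_i\to\overline{B}^{\otimes i+1}\to 0$ yields short exact sequences $0\to\textrm{Hom}_R(\overline{B}^{\otimes i+1},M)\to\textrm{Hom}_R(V_i,M)\to\textrm{Hom}_R(\overline{B}^{\otimes i},M)\to 0$, and splicing these (so that the spliced complex terminates in $\textrm{Hom}_R(\overline{B}^{\otimes 0},M)=M$) together with an injective coresolution $0\to M\to J^0\to J^1\to\cdots$ of $M$ over $RG$ produces an acyclic complex
\[
\mathfrak{I}=\cdots\to\textrm{Hom}_R(V_1,M)\to\textrm{Hom}_R(V_0,M)\to J^0\to J^1\to\cdots
\]
with $M$ as a syzygy. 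Since $\textrm{Hom}_R(V_i,M)\cong\textrm{Hom}_R(\overline{B}^{\otimes i},\textrm{Hom}_R(B,M))$ and $\overline{B}^{\otimes i}$ is $R$-free while $\textrm{Hom}_R(B,M)$ is Gorenstein injective, Proposition \ref{Iprop1} shows that every term $\textrm{Hom}_R(V_i,M)$ is a Gorenstein injective $RG$-module; so $\mathfrak{I}$ is an acyclic complex of Gorenstein injective modules.

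By \cite[Theorem 2.1]{Bouch} it then suffices to prove that $\textrm{Hom}_{RG}(J,\mathfrak{I})$ is acyclic for every injective $RG$-module $J$. Tensoring the $R$-split sequence $0\to R\to B\to\overline{B}\to 0$ with $J$ over $R$ (diagonal action) gives a short exact sequence of $RG$-modules $0\to J\to B\otimes_R J\to\overline{B}\otimes_R J\to 0$ which is $RG$-split, because $J$ is an injective $RG$-module; hence $J$ is a direct summand of $B\otimes_R J$ as an $RG$-module, and consequently $\textrm{Hom}_{RG}(J,\mathfrak{I})$ is a direct summand of $\textrm{Hom}_{RG}(B\otimes_R J,\mathfrak{I})\cong\textrm{Hom}_{RG}(J,\textrm{Hom}_R(B,\mathfrak{I}))$, the isomorphism being the adjoint associativity isomorphism over $RG$ for the diagonal action. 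Thus it is enough to show that $\textrm{Hom}_{RG}(J,\textrm{Hom}_R(B,\mathfrak{I}))$ is acyclic.

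Since $B$ is $R$-projective, $\textrm{Hom}_R(B,\_\!\_)$ is exact, so $\textrm{Hom}_R(B,\mathfrak{I})$ is acyclic; moreover $\textrm{Hom}_R(B,J^j)$ is an injective $RG$-module for each $j$, so the syzygies of $\textrm{Hom}_R(B,\mathfrak{I})$ are $\textrm{Hom}_R(B,M)$, the modules $\textrm{Hom}_R(B\otimes_R\overline{B}^{\otimes i},M)\cong\textrm{Hom}_R(\overline{B}^{\otimes i},\textrm{Hom}_R(B,M))$ for $i\geq 1$, and the cosyzygies of $\textrm{Hom}_R(B,M)$ in the injective coresolution $0\to\textrm{Hom}_R(B,M)\to\textrm{Hom}_R(B,J^0)\to\cdots$ of $RG$-modules. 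The first is Gorenstein injective by hypothesis; the second by Proposition \ref{Iprop1}, as above; and the third because ${\tt GInj}(RG)$ is closed under cokernels of monomorphisms (see \cite[Theorem 2.6]{H1}). Finally, every injective $RG$-module $J$ satisfies $\textrm{Ext}^1_{RG}(J,N)=0$ for every Gorenstein injective $RG$-module $N$ — the injective analogue of the vanishing of $\textrm{Tor}^{RG}_1(I,\_\!\_)$ on Gorenstein flat modules exploited in the proof of Proposition \ref{theor2} — so applying $\textrm{Hom}_{RG}(J,\_\!\_)$ to the short exact sequences of consecutive syzygies of $\textrm{Hom}_R(B,\mathfrak{I})$ preserves exactness; hence $\textrm{Hom}_{RG}(J,\textrm{Hom}_R(B,\mathfrak{I}))$ is acyclic, as needed.

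The point requiring care is the bookkeeping of the dual construction: one must verify that \emph{all} syzygies of the auxiliary complex $\textrm{Hom}_R(B,\mathfrak{I})$ are Gorenstein injective, and here the cosyzygies on the injective-resolution side are handled by the coresolving property of ${\tt GInj}(RG)$ rather than by Proposition \ref{Iprop1}. One should also track the diagonal $G$-action through the splitting of $J$ off $B\otimes_R J$ and through the adjunction isomorphism. Everything else is formal once Proposition \ref{Iprop1} is available, and an entirely parallel argument with a weak characteristic module $A$ in place of $B(G,R)$ gives the corresponding statement (cf. Remark \ref{rem58}).
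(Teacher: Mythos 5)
Your proposal is correct and follows essentially the same route as the paper's proof: the same spliced complex $\mathfrak{I}$ built from the sequences $0\to\textrm{Hom}_R(\overline{B}^{\otimes i+1},M)\to\textrm{Hom}_R(V_i,M)\to\textrm{Hom}_R(\overline{B}^{\otimes i},M)\to 0$ and an injective coresolution, the same reduction via the $RG$-split sequence $0\to J\to B\otimes_R J\to\overline{B}\otimes_R J\to 0$ and the adjunction $\textrm{Hom}_{RG}(B\otimes_R J,\mathfrak{I})\cong\textrm{Hom}_{RG}(J,\textrm{Hom}_R(B,\mathfrak{I}))$, and the same syzygy analysis of $\textrm{Hom}_R(B,\mathfrak{I})$ using Proposition \ref{Iprop1}, \cite[Theorem 2.6]{H1} and the stability result \cite[Theorem 2.1]{Bouch}, concluded by the vanishing of $\textrm{Ext}^1_{RG}(J,-)$ on Gorenstein injectives. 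Your extra bookkeeping of the cosyzygies on the coresolution side is exactly what the paper's citation of \cite[Theorem 2.6]{H1} covers.
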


\begin{proof}Let $B=B(G,R)$, $\overline{B}=\overline{B}(G,R)$ and consider an $RG$-module $M$ such that the $RG$-module $\textrm{Hom}_R (B,M)$ is Gorenstein injective. We also let $V_i=\overline{B}^{\otimes i}\otimes_R B$ for every $i\geq 0$, where $\overline{B}^{\otimes 0}=R$. Since the short exact sequence of $RG$-modules $0\rightarrow R \rightarrow B \rightarrow \overline{B}\rightarrow 0$ is $R$-split, applying the functor $\textrm{Hom}_R (\overline{B}^{\otimes i}\otimes_R \_\!\_, M)$ we obtain for every $i\geq 0$ a short exact sequence of $RG$-modules of the form $$0\rightarrow \textrm{Hom}_R (\overline{B}^{\otimes {i+1}}, M)\rightarrow \textrm{Hom}_R (V_i, M) \rightarrow \textrm{Hom}_R (\overline{B}^{\otimes i}, M)\rightarrow 0.$$ Then, the splicing of the above short exact sequences for every $i\geq 0$ yields an exact sequence of the form
	\begin{equation}\label{IIeq1}
		\cdots \rightarrow \textrm{Hom}_R (V_2, M) \rightarrow \textrm{Hom}_R (V_1, M) \rightarrow \textrm{Hom}_R (V_0, M)\xrightarrow{\alpha} M \rightarrow 0.
	\end{equation}
	Since the $RG$-module $\textrm{Hom}_R (B,M)$ is Gorenstein injective and $\overline{B}$ is $R$-projective, we obtain that the $RG$-module $\textrm{Hom}_R (V_i,M)\cong \textrm{Hom}_R (\overline{B}^{\otimes i},\textrm{Hom}_R (B,M))$ is Gorenstein injective for every $i\geq 0$, by Proposition \ref{Iprop1}. We also consider an $RG$-injective resolution of $M$
	\begin{equation*}
		\textbf{I}=0\rightarrow M \xrightarrow{\beta} I_0 \rightarrow I_1 \rightarrow I_2 \rightarrow \cdots 
	\end{equation*} 
	Splicing the resolution $\textbf{I}$ with the exact sequence (\ref{IIeq1}), we obtain an acyclic complex of Gorenstein injective $RG$-modules 
	\begin{equation*}
		\mathfrak{I}=\cdots \rightarrow \textrm{Hom}_R (V_2, M) \rightarrow \textrm{Hom}_R (V_1, M) \rightarrow \textrm{Hom}_R (V_0, M)\xrightarrow{\beta\alpha} I_0 \rightarrow I_1 \rightarrow I_2 \rightarrow \cdots 
	\end{equation*} 
	which has syzygy the $RG$-module $M$. It suffices to prove that the complex $\textrm{Hom}_{RG}(J,\mathfrak{I})$ is acyclic for every injective $RG$-module $J$. Using \cite[Theorem 2.1]{Bouch} we will then obtain that the $RG$-module $M$ is Gorenstein injective. Let $J$ be an injective $RG$-module. Then, the $R$-split short exact sequence of $RG$-modules $0\rightarrow R \rightarrow B \rightarrow \overline{B}\rightarrow 0$ yields an induced exact sequence of $RG$-modules with diagonal action $0\rightarrow J\rightarrow B \otimes_{R}J\rightarrow \overline{B}\otimes_{R} J\rightarrow 0$ which is $RG$-split. Thus, it suffices to prove that the complex $\textrm{Hom}_{RG}(B\otimes_{R} J,\mathfrak{I})\cong \textrm{Hom}_{RG}(J,\textrm{Hom}_R (B,\mathfrak{I}))$ is acyclic. Since $B$ is $R$-projective, we obtain that the acyclic complex $\textrm{Hom}_R(B,\mathbf{I})$ is an injective resolution of the Gorenstein injective $RG$-module $\textrm{Hom}_R(B,M)$. It follows that every syzygy module of $\textrm{Hom}_R(B,\mathbf{I})$ is also a Gorenstein injective $RG$-module (see \cite[Theorem 2.6]{H1}). 
	 Moreover, the $RG$-module $\textrm{Hom}_R(B,\textrm{Hom}_R(\overline{B}^i,M))\cong \textrm{Hom}_R(V_i,M)$ is Gorenstein injective for every $i\geq 0$. Consequently, every syzygy module of the acyclic complex
	\begin{equation*}
		\textrm{Hom}_R (B,\mathfrak{I})=\cdots\rightarrow \textrm{Hom}_R(B, \textrm{Hom}_R(V_0,M)) \rightarrow  \textrm{Hom}_R(B,I_0) \rightarrow  \textrm{Hom}_R(B,I_1)\rightarrow \cdots
	\end{equation*} is a Gorenstein injective $RG$-module. As the functor $\textrm{Ext}^1_{RG} (J,\_\!\_)$ vanishes on Gorenstein injective $RG$-modules, we conclude that the complex $\textrm{Hom}_{RG}(J,\textrm{Hom}_R (B,\mathfrak{I}))$ is acyclic, as needed.\end{proof}

\begin{Theorem}\label{Iicora}Let $R$ be a commutative ring such that $\textrm{spli}(R)<\infty$ and $G$ be an $\textsc{\textbf{h}}\mathfrak{Y}$-group. Then, $\mathscr{Y}_{B,{\tt GInj}}={\tt GInj}(RG)={\tt WGInj}(RG)$.
\end{Theorem}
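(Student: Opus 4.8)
The plan is to mirror exactly the structure already used in Theorem \ref{cora} and Theorem \ref{Cora}, where the analogous statements for Gorenstein flat and Gorenstein projective/PGF modules were obtained by combining an inclusion coming from a ``tensoring with $B(G,R)$'' proposition with its converse and a trivial inclusion. Here the trivial inclusion is ${\tt GInj}(RG)\subseteq{\tt WGInj}(RG)$, which holds for any ring by definition, since a Gorenstein injective module is in particular a syzygy of an acyclic complex of injective modules. So the only substantive content is the chain $\mathscr{Y}_{B,{\tt GInj}}\subseteq{\tt GInj}(RG)\subseteq{\tt WGInj}(RG)\subseteq\mathscr{Y}_{B,{\tt GInj}}$, and the two nontrivial inclusions are precisely Proposition \ref{Itheor2} and Corollary \ref{Itheor1}.

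Concretely, first I would invoke Corollary \ref{Itheor1} to get ${\tt WGInj}(RG)\subseteq\mathscr{Y}_{B,{\tt GInj}}$; this is where the hypothesis $\textrm{spli}(R)<\infty$ together with $G\in\textsc{\textbf{h}}\mathfrak{Y}$ enters, via Proposition \ref{Iprop1} applied to the $R$-free (hence $R$-projective) module $N=B(G,R)$ and the fact that $\textrm{Hom}_R(B(G,R),M)$ is then Gorenstein injective whenever $M$ is weak Gorenstein injective. Second, I would apply Proposition \ref{Itheor2} to obtain $\mathscr{Y}_{B,{\tt GInj}}\subseteq{\tt GInj}(RG)$, which supplies the reverse direction and rests on the stability result \cite[Theorem 2.1]{Bouch} for Gorenstein injective modules together with the dual-of-$B(G,R)$ bar-type coresolution built in that proof. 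Third, I would note the evident inclusion ${\tt GInj}(RG)\subseteq{\tt WGInj}(RG)$. Chaining these three gives ${\tt WGInj}(RG)\subseteq\mathscr{Y}_{B,{\tt GInj}}\subseteq{\tt GInj}(RG)\subseteq{\tt WGInj}(RG)$, so all four classes coincide, which is the assertion.

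There is essentially no obstacle left once Propositions \ref{Iprop1} and \ref{Itheor2} are in hand — the theorem is a formal three-line deduction, exactly as Theorems \ref{cora} and \ref{Cora} were. If I were proving the supporting propositions from scratch, the genuine difficulty would lie in Proposition \ref{Iprop1}: the transfinite induction on the $\textsc{\textbf{h}}_\alpha\mathfrak{Y}$-stratification, handling the successor step by splicing the $R$-split resolution of $R$ by permutation modules $R[G/G']$, applying $\textrm{Hom}_R(-,\textrm{Hom}_R(N,M))$, and identifying the resulting terms with coinduced modules $\textrm{Coind}^G_{G'}(\textrm{Hom}_R(N,M))$ which are Gorenstein injective by Lemma \ref{Illem45} and the induction hypothesis — and then using that the class of Gorenstein injective $RG$-modules is closed under products together with \cite[Theorem 2.22]{H1} to descend the finite Gorenstein injective dimension bound on both $\textrm{Hom}_R(N,M)$ and $\textrm{Hom}_R(N,M')$ to genuine Gorenstein injectivity. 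But for the theorem itself, all of that work is already packaged, and the proof is simply the concatenation of the two cited inclusions with the trivial one.

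\begin{proof}By Corollary \ref{Itheor1}, we have ${\tt WGInj}(RG)\subseteq\mathscr{Y}_{B,{\tt GInj}}$. Moreover, Proposition \ref{Itheor2} yields $\mathscr{Y}_{B,{\tt GInj}}\subseteq{\tt GInj}(RG)$ and the inclusion ${\tt GInj}(RG)\subseteq{\tt WGInj}(RG)$ is clear. We conclude that $\mathscr{Y}_{B,{\tt GInj}}={\tt GInj}(RG)={\tt WGInj}(RG)$, as needed.
\end{proof}
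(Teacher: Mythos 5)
Your proof is correct and coincides with the paper's own argument: both deduce the theorem by chaining Corollary \ref{Itheor1}, Proposition \ref{Itheor2}, and the trivial inclusion ${\tt GInj}(RG)\subseteq{\tt WGInj}(RG)$.
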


%\begin{Conjecture}Let $R$ be a commutative ring such that $\textrm{spli}(R)<\infty$ and $G$ be a group. Then, ${\tt GInj}(RG)={\tt WGInj}(RG)$.\end{Conjecture}

\begin{proof}By Corollary \ref{Itheor1}, we have ${\tt WG Inj}(RG)\subseteq\mathscr{Y}_{B,{\tt GInj}}$. Moreover, Proposition \ref{Itheor2} yields $\mathscr{Y}_{B,{\tt GInj}}\subseteq{\tt GInj}(RG)$ and the inclusion ${\tt GInj}(RG)\subseteq{\tt WGInj}(RG)$ is clear. We conclude that $\mathscr{Y}_{B,{\tt GInj}}={\tt GInj}(RG)={\tt WGInj}(RG)$, as needed.
\end{proof}

\begin{Corollary}Let $R$ be a commutative ring of finite Gorenstein global dimension. Then, Gorenstein injectivity is closed under subgroups that are in $\textsc{\textbf{h}}\mathfrak{Y}$.
\end{Corollary}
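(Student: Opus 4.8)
The plan is to follow the pattern of the analogous statements for Gorenstein flatness and Gorenstein projectivity established above, reducing the assertion to Theorem \ref{Iicora} via a restriction argument. Fix a Gorenstein injective $RG$-module $M$ and a subgroup $H\leq G$ with $H\in\textsc{\textbf{h}}\mathfrak{Y}$; the task is to prove that $\textrm{Res}^G_H M$ is a Gorenstein injective $RH$-module.

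The first step is to observe that $\textrm{Res}^G_H M$ belongs to ${\tt WGInj}(RH)$. Since $M$ is Gorenstein injective it is, in particular, a syzygy of an acyclic complex $\mathbf{I}$ of injective $RG$-modules. The functor $\textrm{Res}^G_H$ is exact, so $\textrm{Res}^G_H\mathbf{I}$ is again acyclic and has $\textrm{Res}^G_H M$ as a syzygy; moreover --- as already used in the proof of Lemma \ref{Illem45} --- every injective $RG$-module restricts to an injective $RH$-module, because the left adjoint $\textrm{Ind}^G_H$ of $\textrm{Res}^G_H$ is exact ($RG$ being free, hence flat, as a right $RH$-module). Thus $\textrm{Res}^G_H\mathbf{I}$ is an acyclic complex of injective $RH$-modules, which gives $\textrm{Res}^G_H M\in{\tt WGInj}(RH)$.

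The second step is to invoke Theorem \ref{Iicora} for the group $H$: since the finiteness of $\textrm{Ggl.dim}R$ is equivalent to $\textrm{spli}(R)<\infty$ and $H\in\textsc{\textbf{h}}\mathfrak{Y}$, the theorem yields ${\tt WGInj}(RH)={\tt GInj}(RH)$, and combined with the first step this shows $\textrm{Res}^G_H M\in{\tt GInj}(RH)$, as required. I do not expect any genuine obstacle here: the only point that deserves a moment's attention is the stability of injectivity under restriction, and this is already recorded in the discussion preceding Lemma \ref{Illem45}; the remainder is formal bookkeeping entirely parallel to the corresponding corollaries for Gorenstein flat and Gorenstein projective modules.
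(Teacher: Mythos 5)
Your argument is correct and is essentially the paper's own proof: restrict $M$ to $H$, note that restriction of an acyclic complex of injective $RG$-modules yields an acyclic complex of injective $RH$-modules (so $\textrm{Res}^G_H M\in{\tt WGInj}(RH)$), and then apply Theorem \ref{Iicora} to the $\textsc{\textbf{h}}\mathfrak{Y}$-group $H$ using $\textrm{spli}(R)=\textrm{Ggl.dim}\,R<\infty$. The extra justification you give for injectivity being preserved under restriction (exactness of $\textrm{Ind}^G_H$ since $RG$ is free over $RH$) is accurate and matches the fact already used in Lemma \ref{Illem45}.
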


\begin{proof}Let $M$ be a Gorenstein injective $RG$-module and $H$ be a subgroup of $G$ such that $H\in\textsc{\textbf{h}}\mathfrak{Y}$. Then, the $RH$-module $\textrm{Res}_H^G(M)$ is weak Gorenstein injective and hence Theorem \ref{Iicora} implies that $\textrm{Res}_H^G(M)\in{\tt GInj}(RH)$.\end{proof}

\begin{Remark}\label{rem58}\rm Let $R$ be a commutative ring such that $\textrm{spli}(R)<\infty$ and $G$ be a group in the class $\mathfrak{Y}$, i.e. there exists a characteristic module $A$ for $G$ over $R$.  We also consider an $RG$-module $M$ such that the $RG$-module $\textrm{Hom}_R(A,M)$ is Gorenstein injective. Then, $M$ is a Gorenstein injective $RG$-module. 
	
	Indeed, there exists an $R$-split $RG$-short exact sequence $0\rightarrow R \rightarrow A \rightarrow \overline{A}\rightarrow 0$, where the $RG$-modules $A,\overline{A}$ are $R$-projective. Following step by step the proof of Proposition \ref{Itheor2}, we construct an acyclic complex of Gorenstein injective modules
		\begin{equation*}
			\mathfrak{I'}=\cdots \rightarrow \textrm{Hom}_R (V_2', M) \rightarrow \textrm{Hom}_R (V_1', M) \rightarrow \textrm{Hom}_R (V_0', M)\rightarrow I_0' \rightarrow I_1' \rightarrow I_2' \rightarrow \cdots,
		\end{equation*}
 where $V_i'={\overline{A}}^{\otimes i}\otimes_R A$, for every $i\geq 0$, which has syzygy the $RG$-module $M$. Using the $R$-split $RG$-short exact sequence $0\rightarrow R \rightarrow A \rightarrow \overline{A} \rightarrow 0 $ and \cite[Theorem 2.1]{Bouch}, it suffices to show that the complex $\textrm{Hom}_{RG}(J,\textrm{Hom}_R (A,\mathfrak{I'}))$ is acyclic for every injective $RG$-module $J$. This follows exactly as in the proof of Proposition \ref{Itheor2}, since every syzygy module of $\textrm{Hom}_R (A,\mathfrak{I'})$ is Gorenstein injective.
\end{Remark}

\begin{Remark}\rm The results of this section require the group $G$ to be fully (not locally) in the hierarchy. The reason is that we do not know weather there exists a Gorenstein injective analogue of \cite[Lemma 5.6]{BC} or \cite[Proposition 4.5]{St1}, and hence we are not able to generalize Proposition \ref{Iprop1} over any $\textsc{\textbf{lh}}\mathfrak{Y}$-group.
\end{Remark}

%\begin{Corollary} Let $R$ be a commutative ring of finite weak global dimension and $G$ be a group of type $\Phi_R$. Then, $\mathscr{X}_{B,{\tt Flat}}={\tt GFlat}(RG)={\tt WGFlat}(RG)$, where $\mathscr{X}_{B,{\tt Flat}}=\{M\in \textrm{Mod}(RG): \, M\otimes_R B(G,R)\in {\tt Flat}(RG)\}$.\end{Corollary}\begin{proof}Invoking Theorem \ref{cora}, it suffices to show that $\mathscr{X}_{B,{\tt GFlat}}\subseteq\mathscr{X}_{B,{\tt Flat}}$. Let $M\in \mathscr{X}_{B,{\tt GFlat}}$. Then, $M\in {\tt WGFlat}(RG)\subseteq {\tt WGFlat}(R)$, and hence the finiteness of $\textrm{wgl.dim}(R)$ implies that $M$ is $R$-flat. Since $\textrm{fd}_{RG} B(G,R)<\infty$, we obtain that $\textrm{fd}_{RG}M\otimes_R B(G,R)<\infty$. We conclude that $M\otimes_R B(G,R)\in {\tt GFlat}(RG)\cap \overline{{\tt Flat}}(RG)={\tt Flat}(RG)$ (see \cite[Lemma 2.4]{St}).\end{proof}

\section{Gorenstein homological dimension of $\textsc{\textbf{lh}}\mathfrak{F}$-groups}
Our goal in this section is to determine the Gorenstein homological dimension $\textrm{Ghd}_R G$ of an $\textsc{\textbf{lh}}\mathfrak{F}$-group $G$ over a commutative ring of finite Gorenstein weak global dimension.

\begin{Definition}Let $R$ be a commutative ring and $G$ be a group.
	
	$\textrm{f.k}(RG):=\textrm{sup}\,\{\textrm{fd}_{RG}M \, : \, M\in \textrm{Mod}(RG), \, \textrm{fd}_{RH}M<\infty \, \textrm{for every finite} \,\, H\leq G\}$.
	
	$\textrm{fin.f.dim}(RG):=\textrm{sup}\,\{\textrm{fd}_{RG}M \, : \, M\in \textrm{Mod}(RG), \, \textrm{fd}_{RG}M<\infty\}$.
\end{Definition}

\begin{Lemma}\label{l220}Let $R$ be a commutative ring and $G$ be a group. Then, for every subgroup $H$ of $G$ we have $\textrm{fin.f.dim}(RH)\leq \textrm{fin.f.dim}(RG)$.
\end{Lemma}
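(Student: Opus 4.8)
The plan is to show that flat dimension can only drop, or stay the same, upon restricting from a group to a subgroup, and hence the supremum of the finite flat dimensions over all $RG$-modules with finite flat dimension dominates the corresponding supremum over $RH$. First I would recall the standard change-of-rings machinery for the subgroup inclusion $H\leq G$: the ring $RG$ is free (in particular flat) as a right $RH$-module, with a basis given by a set of right coset representatives, so restriction $\textrm{Res}^G_H$ sends flat $RG$-modules to flat $RH$-modules and, more generally, preserves flat resolutions. Consequently $\textrm{fd}_{RH}(\textrm{Res}^G_H M)\leq \textrm{fd}_{RG} M$ for every $RG$-module $M$.

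Next I would use the adjunction between induction and restriction. Given an $RH$-module $M$ with $\textrm{fd}_{RH} M = n < \infty$, I want to produce an $RG$-module with flat dimension exactly $n$ (or at least $\geq n$ and finite). The natural candidate is $\textrm{Ind}^G_H M = RG\otimes_{RH} M$: since $RG$ is flat (indeed free) as a right $RH$-module, tensoring an $RH$-flat resolution of $M$ over $RH$ with $RG$ yields an $RG$-flat resolution of $\textrm{Ind}^G_H M$, so $\textrm{fd}_{RG}(\textrm{Ind}^G_H M)\leq n < \infty$. For the reverse inequality one uses that $M$ is a direct summand of $\textrm{Res}^G_H\textrm{Ind}^G_H M$ — because the unit $M\to \textrm{Res}^G_H\textrm{Ind}^G_H M$, $m\mapsto 1\otimes m$, is split by the $RH$-map sending $g\otimes m$ to $gm$ if $g\in H$ and to $0$ otherwise — so $\textrm{fd}_{RH} M \leq \textrm{fd}_{RH}(\textrm{Res}^G_H\textrm{Ind}^G_H M)\leq \textrm{fd}_{RG}(\textrm{Ind}^G_H M)$ by the restriction inequality of the previous paragraph. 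Hence $\textrm{fd}_{RG}(\textrm{Ind}^G_H M)=n$.

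Putting these together: for any $RH$-module $M$ with $\textrm{fd}_{RH} M < \infty$, the $RG$-module $\textrm{Ind}^G_H M$ has $\textrm{fd}_{RG}(\textrm{Ind}^G_H M)=\textrm{fd}_{RH} M < \infty$, and therefore $\textrm{fd}_{RH} M \leq \textrm{fin.f.dim}(RG)$. Taking the supremum over all such $M$ gives $\textrm{fin.f.dim}(RH)\leq \textrm{fin.f.dim}(RG)$, as desired. I would state the argument so that it covers the case $\textrm{fin.f.dim}(RG)=\infty$ trivially, and note that when $\textrm{fin.f.dim}(RG)<\infty$ the bound is genuine and uniform in $M$.

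The main obstacle is essentially just bookkeeping rather than a deep difficulty: one must be careful that both halves of the flat-dimension equality for $\textrm{Ind}^G_H M$ are justified — the easy half from flatness of $RG$ over $RH$, and the subtler half from the splitting of the unit of the (induction, restriction) adjunction together with the restriction inequality — and that the finiteness hypothesis is tracked correctly so that the supremum defining $\textrm{fin.f.dim}(RH)$ is indeed witnessed inside the supremum defining $\textrm{fin.f.dim}(RG)$. No appeal to the Gorenstein machinery or to the hypotheses on $R$ and $G$ from earlier sections is needed; this is a purely homological lemma about the group ring extension $RH\hookrightarrow RG$.
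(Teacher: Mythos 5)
Your proposal is correct and follows essentially the same route as the paper: induce a finite $RH$-flat resolution to get $\textrm{fd}_{RG}(\textrm{Ind}^G_H M)\leq \textrm{fd}_{RH}M$, then use that $M$ is a direct summand of $\textrm{Res}^G_H\textrm{Ind}^G_H M$ (plus the fact that restriction does not raise flat dimension) to bound $\textrm{fd}_{RH}M$ by $\textrm{fin.f.dim}(RG)$. Your write-up is if anything slightly more explicit than the paper's, spelling out the splitting of the unit and the restriction inequality that the paper uses implicitly.
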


\begin{proof}It suffices to assume that $\textrm{fin.f.dim}(RG)=n<\infty$. Let $M$ be an $RH$-module such that $\textrm{fd}_{RH}M=k<\infty$. Then, there exists an $RH$-flat resolution of $M$ of length $k$ $$0\rightarrow F_k \rightarrow \cdots \rightarrow F_1 \rightarrow F_0 \rightarrow M \rightarrow 0$$ and hence we obtain an exact sequence of $RG$-modules of the form $$0\rightarrow \textrm{Ind}^G_H F_k \rightarrow \cdots \rightarrow \textrm{Ind}^G_H F_1 \rightarrow \textrm{Ind}^G_H F_0 \rightarrow \textrm{Ind}^G_H M\rightarrow 0$$ which constitutes an $RG$-flat resolution of $\textrm{Ind}^G_H M$ of length $k$. Since M is isomorphic to a direct summand of $\textrm{Res}^G_H (\textrm{Ind}^G_H M)$, we obtain that $\textrm{fd}_{RG}\textrm{Ind}^G_H M=k$. Thus, $\textrm{fd}_{RH}M=k\leq n$ for every $RH$-module $M$ of finite flat dimension. We conclude that $\textrm{fin.f.dim}(RH)\leq \textrm{fin.f.dim}(RG)$, as needed.
\end{proof}

\begin{Proposition}\label{prop224}Let $R$ be a commutative ring and $G$ be an $\textsc{\textbf{lh}}\mathfrak{F}$-group. Then, $\textrm{f.k}(RG)\leq \textrm{fin.f.dim}(RG)$.
\end{Proposition}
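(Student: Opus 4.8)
The plan is to reduce the claim to a statement about individual modules and then exploit the $\textsc{\textbf{lh}}\mathfrak{F}$ structure to move between $G$ and its finitely generated subgroups. Assume $\textrm{fin.f.dim}(RG)=n<\infty$, since otherwise there is nothing to prove. Let $M$ be an $RG$-module with $\textrm{fd}_{RH}M<\infty$ for every finite subgroup $H\leq G$; I must show $\textrm{fd}_{RG}M\leq n$. The first step is to observe that, since $G$ is $\textsc{\textbf{lh}}\mathfrak{F}$, it is the filtered union of its finitely generated subgroups $(G_\lambda)_\lambda$, each of which lies in $\textsc{\textbf{h}}\mathfrak{F}$. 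For each such $G_\lambda$, the restriction $\textrm{Res}^G_{G_\lambda}M$ still has finite flat dimension over $RH$ for every finite $H\leq G_\lambda$, so it is enough first to handle the case $G\in\textsc{\textbf{h}}\mathfrak{F}$ and then patch the pieces together by a direct limit argument.

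For the case $G\in\textsc{\textbf{h}}\mathfrak{F}$, I would use transfinite induction on the ordinal $\alpha$ with $G\in\textsc{\textbf{h}}_\alpha\mathfrak{F}$, exactly in the spirit of the proof of Proposition \ref{prop1}. If $\alpha=0$ then $G$ is finite and $\textrm{fd}_{RG}M<\infty$ by hypothesis, hence $\textrm{fd}_{RG}M\leq\textrm{fin.f.dim}(RG)\leq n$ by Lemma \ref{l220} (applied with $H=G$ a subgroup of the ambient $\textsc{\textbf{lh}}\mathfrak{F}$-group). For the inductive step, the $\textsc{\textbf{h}}_\alpha\mathfrak{F}$ structure furnishes a $\mathbb{Z}$-split resolution $0\to C_r\to\cdots\to C_0\to\mathbb{Z}\to 0$ with each $C_i$ a direct sum of permutation modules $\mathbb{Z}[G/G']$, $G'\in\textsc{\textbf{h}}_\beta\mathfrak{F}$ for $\beta<\alpha$; tensoring with $M$ over $\mathbb{Z}$ yields an exact sequence $0\to K_r\to\cdots\to K_0\to M\to 0$ of $RG$-modules with each $K_i$ a direct sum of modules $\textrm{Ind}^G_{G'}\textrm{Res}^G_{G'}M$. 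By induction each $\textrm{Res}^G_{G'}M$ has flat dimension $\leq n$ over $RG'$; since $\textrm{fd}_{RG}\textrm{Ind}^G_{G'}(-)=\textrm{fd}_{RG'}(-)$ (induction preserves flatness and $\textrm{Res}\,\textrm{Ind}$ contains the identity as a summand, as in the proof of Lemma \ref{l220}), we get $\textrm{fd}_{RG}K_i\leq n$. Hence $\textrm{fd}_{RG}M\leq n+r$. To improve $n+r$ to $n$ one appeals to $\textrm{fin.f.dim}$: from $\textrm{fd}_{RG}M<\infty$ (which we now know) and the definition of $\textrm{fin.f.dim}(RG)=n$ we conclude $\textrm{fd}_{RG}M\leq n$ directly. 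Finally, for a general $\textsc{\textbf{lh}}\mathfrak{F}$-group $G$, each $\textrm{Res}^G_{G_\lambda}M$ has flat dimension $\leq\textrm{fin.f.dim}(RG_\lambda)\leq n$ over $RG_\lambda$ by the $\textsc{\textbf{h}}\mathfrak{F}$ case together with Lemma \ref{l220}; then $\textrm{Ind}^G_{G_\lambda}\textrm{Res}^G_{G_\lambda}M$ has flat dimension $\leq n$ over $RG$, and since $M\cong\varinjlim_\lambda \textrm{Ind}^G_{G_\lambda}\textrm{Res}^G_{G_\lambda}M$ and flat dimension does not increase under filtered colimits, $\textrm{fd}_{RG}M\leq n$.

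The main obstacle I anticipate is the bookkeeping in the inductive step: one must be careful that the bound obtained is genuinely $\textrm{fin.f.dim}(RG)$ and not merely $\textrm{fin.f.dim}(RG)+r$, and the clean way around this is the two-stage argument — first prove $\textrm{fd}_{RG}M<\infty$ (where the crude bound $n+r$ suffices), and only then invoke the definition of $\textrm{fin.f.dim}$ to sharpen it to $n$. A secondary technical point is justifying $\textrm{fd}_{RG}\textrm{Ind}^G_{G'}M=\textrm{fd}_{RG'}M$ when $G'$ is not finite-index; this is exactly the computation already carried out in Lemma \ref{l220} and transfers verbatim. Everything else is a routine assembly of these ingredients.
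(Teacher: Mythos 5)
Your argument is correct and follows essentially the same route as the paper: the same transfinite induction over the $\textsc{\textbf{h}}\mathfrak{F}$-hierarchy, the same two-stage step (first finiteness via the permutation-module resolution and $\textrm{Ind}\,\textrm{Res}$, then sharpening to $n$ via $\textrm{fin.f.dim}$ and Lemma \ref{l220}), and the same filtered-union patching. The only cosmetic difference is that you apply the colimit argument directly to $M\cong\varinjlim_\lambda \textrm{Ind}^G_{G_\lambda}\textrm{Res}^G_{G_\lambda}M$ using that flat dimension does not increase under filtered colimits (Tor commutes with direct limits), whereas the paper applies it to the $n$-th syzygy to see it is flat; these are interchangeable.
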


\begin{proof}It suffices to assume that $\textrm{fin.f.dim}(RG)=n<\infty$. Let $M$ be an $RG$-module such that $\textrm{fd}_{RF}M<\infty$ for every finite subgroup $F$ of $G$. We will first show that $\textrm{fd}_{RH}M\leq n$ over any $\textsc{\textbf{h}}\mathfrak{F}$-subgroup $H$ of $G$. We use transfinite induction on the ordinal number $\alpha$, which is such that $H\in \textsc{\textbf{h}}_{\alpha}\mathfrak{F}$. If $\alpha=0$, then $H$ is finite and hence $\textrm{fd}_{RH}M<\infty$. Then, Lemma \ref{l220} yields $\textrm{fd}_{RH}M\leq\textrm{fin.f.dim}(RH)\leq \textrm{fin.f.dim}(RG)= n$. Now we assume that $\textrm{fd}_{RH'}M\leq n$ for every $\textsc{\textbf{h}}_{\beta}\mathfrak{F}$-subgroup $H'$ of $G$ and every $\beta<\alpha$. Let $H$ be an $\textsc{\textbf{h}}_{\alpha}\mathfrak{F}$-subgroup of $G$. Then, there
	exists an exact sequence of $\mathbb{Z}H$-modules $$0\rightarrow C_r \rightarrow \cdots \rightarrow C_1 \rightarrow C_0 \rightarrow \mathbb{Z} \rightarrow 0,$$	where each $C_i$
	is a direct sum of permutation $\mathbb{Z}H$-modules of the form $\mathbb{Z}[H/H']$, with $H'$ an $\textsc{\textbf{h}}_{\beta}\mathfrak{F}$-subgroup of $H$ for some $\beta<\alpha$. We note that the integer $r$ is the dimension of the $H$-CW-complex provided by the definition of $H$ being an $\textsc{\textbf{h}}_{\alpha}\mathfrak{F}$-group. The above exact sequence yields an exact sequence of $RH$-modules
	\begin{equation}\label{eq14}
		0\rightarrow M_r \rightarrow \cdots \rightarrow M_1 \rightarrow M_0 \rightarrow M \rightarrow 0,
	\end{equation}
	where each $M_i$ is a direct sum of modules of the form ${\textrm{Ind}^H_{H'}}{\textrm{Res}^H_{H'}} M$, where $H'\in \textsc{\textbf{h}}_{\beta}\mathfrak{F}$, $\beta<\alpha$ (see also \cite[Lemma 2.3]{Bis2}). Our induction hypothesis implies that $\textrm{fd}_{RH'}\textrm{Res}^H_{H'}M\leq n$ for every $H'\in \textsc{\textbf{h}}_{\beta}\mathfrak{F}$, $\beta<\alpha$, and hence we also have $\textrm{fd}_{RH}\textrm{Ind}^H_{H'}\textrm{Res}^H_{H'}M\leq n$ for every $H'\in \textsc{\textbf{h}}_{\beta}\mathfrak{F}$, $\beta<\alpha$. Consequently, $\textrm{fd}_{RH}M_i <\infty$ for every $i=0,\dots,r$. Thus, the exact sequence (\ref{eq14}) yields $\textrm{fd}_{RH}M <\infty$. Invoking Lemma \ref{l220}, we infer that $\textrm{fd}_{RH}M\leq \textrm{fin.f.dim}(RH)\leq \textrm{fin.f.dim}(RG)= n$.
	
	Let $G$ be an $\textsc{\textbf{lh}}\mathfrak{F}$-group. Then, $G$ can be expressed as the filtered union of its finitely generated subgroups $(G_{\lambda})_{\lambda}$, which are all contained in $\textsc{\textbf{h}}\mathfrak{F}$. Since $G_{\lambda}\in \textsc{\textbf{h}}\mathfrak{F}$, we have $\textrm{fd}_{RG_{\lambda}}M\leq n$. We consider an exact sequence of $RG$-modules 
	\begin{equation}\label{eq15}
		0 \rightarrow K_n \rightarrow F_{n-1} \rightarrow \cdots \rightarrow F_1 \rightarrow F_0 \rightarrow M \rightarrow 0,
	\end{equation}
	where the $RG$-module $F_i$ is flat for every $i=0,\dots,n-1$. Then, $K_n$ is a flat $RG_{\lambda}$-module, and hence the $RG$-module $\textrm{Ind}^G_{G_{\lambda}}K_n$ is also flat for every $\lambda$. Consequently, the $RG$-module $K_n\cong  {\lim\limits_{\longrightarrow}}_{\lambda}\textrm{Ind}^G_{G_{\lambda}}K_n$ is flat as direct limit of flat modules. Thus, the exact sequence (\ref{eq15}) yields $\textrm{fd}_{RG}M\leq n$. We conclude that $\textrm{f.k}(RG)\leq n$, as needed.
\end{proof}

%Let $G$ be an $\textsc{\textbf{lh}}\mathfrak{F}$-group. We will proceed by induction on the cardinality of $G$. If $G$ is a countable group, then $G$ acts on a tree with stabilizers certain finitely generated subgroups of $G$, and hence $G\in \textsc{\textbf{h}}\mathfrak{F}$. Thus, we assume that $G$ is uncountable. The group $G$ may then be expressed as an ascending union of subgroups $G = \cup_{\lambda<\delta} G_{\lambda}$, for some ordinal $\delta$, where each $G_{\lambda}$ has strictly smaller cardinality than $G$. By induction we have $\textrm{fd}_{RG_{\lambda}}M\leq n$, for every $\lambda<\delta$. We consider an exact sequence of $RG$-modules $$0 \rightarrow K_n \rightarrow F_{n-1} \rightarrow \cdots \rightarrow F_1 \rightarrow F_0 \rightarrow M \rightarrow 0,$$ where the $RG$-module $F_i$ is flat for every $i=0,\dots,n-1$. Then, $K_n$ is a flat $RG_{\lambda}$-module for every $\lambda<\delta$. Using Lemma \ref{lemtor}, we obtain that $\textrm{fd}_{RG}K_n\leq 1$. We conclude that $\textrm{fd}_{RG}M<\infty$, and hence $\textrm{fd}_{RG}M\leq n$.

\begin{Lemma}\label{prop225}Let $R$ be a commutative ring such that $\textrm{sfli}(R)<\infty$ and $G$ be a group. Then, $\textrm{sfli}(RG)\leq \textrm{f.k}(RG)$.
\end{Lemma}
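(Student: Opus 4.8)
The plan is to show that every injective $RG$-module has flat dimension at most $\textrm{f.k}(RG)$ over $RG$, which immediately gives $\textrm{sfli}(RG) \leq \textrm{f.k}(RG)$.

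\medskip

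First I would take an arbitrary injective $RG$-module $I$ and aim to verify the hypothesis defining $\textrm{f.k}(RG)$, namely that $\textrm{fd}_{RF}I < \infty$ for every finite subgroup $F \leq G$. This is precisely where the finiteness of $\textrm{sfli}(R)$ enters: for a finite group $F$, the group ring $RF$ is a finitely generated free $R$-module, so $\textrm{sfli}(RF)$ is controlled by $\textrm{sfli}(R)$ — indeed $\textrm{sfli}(RF) \leq \textrm{sfli}(R) < \infty$, since an injective $RF$-module restricts to an injective $R$-module (as $RF$ is $R$-free of finite rank, coinduction from $R$ to $RF$ preserves injectives and every injective $RF$-module is a summand of one coinduced from $R$), and an $R$-flat resolution of bounded length induces, upon applying $RF \otimes_R -$, an $RF$-flat resolution of the same length. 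Thus $\textrm{fd}_{RF}I \leq \textrm{sfli}(RF) \leq \textrm{sfli}(R) < \infty$ for every finite subgroup $F$.

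\medskip

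Having checked the hypothesis, the definition of $\textrm{f.k}(RG)$ directly yields $\textrm{fd}_{RG}I \leq \textrm{f.k}(RG)$. Since $I$ was an arbitrary injective $RG$-module, taking the supremum over all such $I$ gives $\textrm{sfli}(RG) = \sup\{\textrm{fd}_{RG}I : I \text{ injective } RG\text{-module}\} \leq \textrm{f.k}(RG)$, which is the claim. (If $\textrm{f.k}(RG) = \infty$ there is nothing to prove, so one may assume it is finite, though the argument does not require this.)

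\medskip

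The main obstacle is the restriction-of-injectives step: one must be careful that restricting an injective $RG$-module to a finite subgroup $F$ yields an $RF$-module of finite flat dimension. The cleanest route is the observation above that $\textrm{sfli}(RF) \leq \textrm{sfli}(R)$ for $F$ finite — which follows because $RF$ is free of finite rank over the commutative ring $R$, so the functors $RF \otimes_R -$ and $\textrm{Hom}_R(RF, -) \cong \textrm{Hom}_R(RF,-)$ behave well with respect to flatness and injectivity. Alternatively, one can invoke the inequality $\textrm{sfli}(RF) \leq \textrm{Ghd}_R F + \textrm{sfli}(R)$ from \cite{St} together with $\textrm{Ghd}_R F = 0$ for finite $F$ over a ring with $\textrm{sfli}(R)<\infty$; but the elementary argument via the free module structure is self-contained and suffices here.
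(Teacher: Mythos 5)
Your proposal is correct, and its skeleton is the same as the paper's: given an injective $RG$-module $I$, verify that $\textrm{fd}_{RF}I<\infty$ for every finite subgroup $F\leq G$, so that $I$ satisfies the defining condition of $\textrm{f.k}(RG)$ and hence $\textrm{fd}_{RG}I\leq \textrm{f.k}(RG)$; taking the supremum over injective $RG$-modules gives $\textrm{sfli}(RG)\leq \textrm{f.k}(RG)$, with the case $\textrm{f.k}(RG)=\infty$ being trivial. The only genuine divergence is how the bound over finite subgroups is obtained. The paper quotes $\textrm{Ghd}_R H=0$ for finite $H$ together with the inequality $\textrm{sfli}(RH)\leq \textrm{Ghd}_R H+\textrm{sfli}(R)$ from the cited work, whereas you prove $\textrm{sfli}(RF)\leq \textrm{sfli}(R)$ by hand: an injective $RF$-module $J$ restricts to an injective $R$-module, so $\textrm{fd}_R(\textrm{Res}\,J)\leq \textrm{sfli}(R)$, and applying $RF\otimes_R-$ to a bounded $R$-flat resolution bounds $\textrm{fd}_{RF}(RF\otimes_R \textrm{Res}\,J)$; since $F$ is finite, induction and coinduction agree, and the unit $J\to \textrm{Hom}_R(RF,\textrm{Res}\,J)\cong RF\otimes_R\textrm{Res}\,J$ splits because $J$ is $RF$-injective, so the bound descends to $J$. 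This elementary route is self-contained and avoids the (weak) characteristic module machinery behind the cited inequality, at the price of using finiteness of $F$ twice (for induction $\cong$ coinduction and for the split embedding). Two small points of precision: the reason restriction preserves injectivity (both from $RG$ to $RF$ and from $RF$ to $R$) is exactness of the corresponding induction functor, i.e.\ freeness of the ring extension, not that coinduction preserves injectives as your parenthetical suggests; and the summand step genuinely needs the injectivity of $J$ over $RF$ to split the canonical monomorphism. With these clarifications your argument is complete and proves the same inequality.
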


\begin{proof}It suffices to assume that $\textrm{f.k}(RG)=n<\infty$. Let $I$ be an injective $RG$-module and $H$ a finite subgroup of $G$. Then, $\textrm{Ghd}_R H=0$ (see \cite[Proposition 3.6]{St1}), and hence \cite[Proposition 3.13]{St} yields $\textrm{sfli}(RH)\leq \textrm{sfli}(R) <\infty$. Since $I$ is injective as $RH$-module, we obtain that $\textrm{fd}_{RH}I<\infty$. It follows that $\textrm{fd}_{RG}I\leq \textrm{f.k}(RG)=n $ for every injective $RG$-module $I$. We conclude that $\textrm{sfli}(RG)\leq \textrm{f.k}(RG)$, as needed.
\end{proof}

\begin{Corollary}\label{cor226}Let $R$ be a commutative ring such that $\textrm{sfli}(R)<\infty$ and $G$ be an $\textsc{\textbf{lh}}\mathfrak{F}$-group. Then, $\textrm{f.k}(RG)=\textrm{sfli}(RG)=\textrm{fin.f.dim}(RG)$.
\end{Corollary}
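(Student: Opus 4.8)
**The plan is to combine the three inequalities already established in this section with the general comparison results quoted in the introduction.** We have $\textrm{f.k}(RG) \leq \textrm{fin.f.dim}(RG)$ from Proposition \ref{prop224} and $\textrm{sfli}(RG) \leq \textrm{f.k}(RG)$ from Lemma \ref{prop225}, so it remains to close the cycle by showing $\textrm{fin.f.dim}(RG) \leq \textrm{sfli}(RG)$, which forces all three quantities to coincide. In fact this last inequality holds for an arbitrary ring: if $M$ is an $RG$-module with $\textrm{fd}_{RG}M = k < \infty$, then choosing an injective cogenerator is unnecessary --- instead one uses that a module of finite flat dimension over any ring has flat dimension bounded by $\textrm{sfli}(RG)$, a standard fact (one splices a flat resolution of $M$ against the flat dimension of injectives, or invokes that $\textrm{fin.f.dim}(RG) \leq \textrm{sfli}(RG)$ directly from the definition of $\textrm{sfli}$ via the equality $\textrm{Gwgl.dim}(RG) = \textrm{sfli}(RG)$ when the latter is finite, since $\textrm{fd} \leq \textrm{Gfd} + \textrm{Gwgl.dim}$ type bounds collapse).

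Concretely, I would argue as follows. First, note that $\textrm{sfli}(R) < \infty$ together with $G \in \textsc{\textbf{lh}}\mathfrak{F} \subseteq \textsc{\textbf{lh}}\mathfrak{X}$ gives $\textrm{sfli}(RG) < \infty$ via the inequality $\textrm{sfli}(RG) \leq \textrm{Ghd}_R G + \textrm{sfli}(R)$ quoted in the introduction (the finiteness of $\textrm{Ghd}_R G$ for $\textsc{\textbf{lh}}\mathfrak{F}$-groups over such $R$ being known, e.g. from the characteristic-module machinery of \cite{St,St1}). With $\textrm{sfli}(RG)$ finite, $\textrm{Gwgl.dim}(RG) = \textrm{sfli}(RG)$ by \cite[Theorem 2.4]{CET}, and in particular every $RG$-module of finite flat dimension has flat dimension at most $\textrm{sfli}(RG)$; this yields $\textrm{fin.f.dim}(RG) \leq \textrm{sfli}(RG)$. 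Combining with Proposition \ref{prop224} and Lemma \ref{prop225} we get the chain
\begin{equation*}
\textrm{sfli}(RG) \leq \textrm{f.k}(RG) \leq \textrm{fin.f.dim}(RG) \leq \textrm{sfli}(RG),
\end{equation*}
so all three are equal.

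**The only genuinely delicate point is verifying $\textrm{fin.f.dim}(RG) \leq \textrm{sfli}(RG)$ cleanly**, and for this it is cleanest to avoid dragging in Gorenstein weak global dimension and instead argue directly: given $M$ with $\textrm{fd}_{RG}M = k$, pick a short exact sequence $0 \to K \to F \to M \to 0$ iterated $\textrm{sfli}(RG)$ times to express a $k$-th syzygy-type module, or more simply observe that for any injective $RG$-module $I$ one has $\textrm{Tor}^{RG}_i(I, M) = 0$ for $i > \textrm{fd}_{RG}M$, while a dimension-shifting argument shows $\textrm{fd}_{RG}M \leq \textrm{sfli}(RG)$ whenever $\textrm{fd}_{RG}M < \infty$ --- this is a well-known lemma (see, e.g., \cite[Theorem 2.4]{CET} and the surrounding discussion, or \cite{GG}). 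Since $\textrm{sfli}(R) < \infty$ is assumed and $\textrm{sfli}(RG) < \infty$ follows as above, the inequality is immediate and the corollary follows.
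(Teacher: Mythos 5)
The overall skeleton of your argument is the same as the paper's: you close the cycle $\textrm{sfli}(RG)\leq \textrm{f.k}(RG)\leq \textrm{fin.f.dim}(RG)\leq \textrm{sfli}(RG)$ using Proposition \ref{prop224} and Lemma \ref{prop225} for the first two links. The problem is your justification of the third link. You assert that $\textrm{sfli}(RG)<\infty$ because ``the finiteness of $\textrm{Ghd}_R G$ for $\textsc{\textbf{lh}}\mathfrak{F}$-groups over such $R$ is known''. It is not, and it is in fact false in general: $\textrm{Ghd}_R G$ can be infinite for an $\textsc{\textbf{lh}}\mathfrak{F}$-group (for instance a free abelian group of infinite rank, which is soluble and hence $\textsc{\textbf{lh}}\mathfrak{F}$, has infinite Gorenstein homological dimension over $\mathbb{Z}$). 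Indeed, Theorem \ref{theo712} of this very section is a \emph{criterion} for when $\textrm{Ghd}_R G<\infty$ (equivalently $\textrm{fd}_{RG}B(G,R)<\infty$), and the later corollaries impose type FP$_\infty$ precisely to secure this finiteness. Since your final paragraph still conditions the conclusion on ``$\textrm{sfli}(RG)<\infty$ follows as above'', the proof as written has a genuine gap.

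The gap is repairable, and the repair is essentially what the paper does: the inequality $\textrm{fin.f.dim}(RG)\leq \textrm{sfli}(RG)$ needs no finiteness hypothesis at all. If $\textrm{fd}_{RG}M=n<\infty$, choose a right $RG$-module $N$ with $\textrm{Tor}^{RG}_n(N,M)\neq 0$, embed $N$ into an injective right module $I$; dimension shifting gives $\textrm{Tor}^{RG}_n(I,M)\neq 0$, hence $n\leq \textrm{fd}\, I\leq \textrm{sfli}((RG)^{\textrm{op}})=\textrm{sfli}(RG)$, where the last equality uses $RG\cong (RG)^{\textrm{op}}$ --- this left/right point is exactly why the paper invokes the anti-isomorphism together with \cite[Proposition 2.4(i)]{Emta}, and it is glossed over in your sketch. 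Alternatively, and even more cheaply within your own framework: if $\textrm{sfli}(RG)=\infty$ the chain $\textrm{sfli}(RG)\leq \textrm{f.k}(RG)\leq \textrm{fin.f.dim}(RG)$ already forces all three invariants to be infinite, while if $\textrm{sfli}(RG)<\infty$ your argument via $\textrm{Gwgl.dim}(RG)=\textrm{sfli}(RG)$ (using \cite{CET} with $RG\cong(RG)^{\textrm{op}}$, plus the fact that $\textrm{fd}=\textrm{Gfd}$ for modules of finite flat dimension) does close the cycle. Either fix yields a correct proof; the unsupported claim that $\textrm{Ghd}_RG$ is always finite should simply be removed.
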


\begin{proof}Since $RG\cong {(RG)}^{\textrm{op}}$, by \cite[Proposition 2.4(i)]{Emta} we obtain that $\textrm{fin.f.dim}(RG)\leq \textrm{sfli}(RG)$. By Proposition \ref{prop224}, we have $\textrm{f.k}(RG)\leq \textrm{fin.f.dim}(RG)$. Moreover, Lemma \ref{prop225} yields $\textrm{sfli}(RG)\leq \textrm{f.k}(RG)$. We conclude that $\textrm{f.k}(RG)=\textrm{sfli}(RG)=\textrm{fin.f.dim}(RG)$, as needed. 
\end{proof}

%\section{Weak characteristic modules for $\textsc{\textbf{lh}}\mathfrak{F}$-groups}

\begin{Remark}\label{rem711} \rm Since the $RG$-module $B(G,R)$ is $R$-free and admits an $R$-split $RG$-linear monomorphism $\iota: R \rightarrow B(G,R)$, we infer that $B(G,R)$ is a weak characteristic module for $G$ over $R$ if and only if $\textrm{fd}_{RG} B(G,R)<\infty$.
\end{Remark}
%\begin{Lemma}Let $R$ be a commutative ring such that $\textrm{fin.f.dim}RG <\infty$ and consider an $\textsc{\textbf{lh}}\mathfrak{F}$-group $G$. Then, $\textrm{fd}_{RG}B(G,R)< \infty$.\end{Lemma}\begin{proof}Let $\textrm{fin.f.dim}RG=m<\infty$. We first consider the case where $G$ is an $\textsc{\textbf{h}}\mathfrak{F}$-group.\end{proof}

The following result provides the Gorenstein flat analogue of \cite[Theorem 7.6]{St1} (see also \cite[Theorem A.1]{ET2}).

\begin{Theorem}\label{theo712}Let $R$ be a commutative ring such that $\textrm{sfli}(R)<\infty$ and consider an $\textsc{\textbf{lh}}\mathfrak{F}$-group $G$. Then:
	\begin{itemize}
		\item[(i)] $B(G,R)$ is a weak characteristic module for $G$ if and only if ${{\textrm{Ghd}}_{R}G}<\infty$,
		\item[(ii)] ${{\textrm{Ghd}}_{R}G}=\textrm{fd}_{RG} B(G,R)$.
	\end{itemize}
\end{Theorem}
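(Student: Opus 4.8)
The plan is to deduce both statements from the general inequalities relating $\textrm{sfli}$ to $\textrm{Ghd}_R G$ together with the combinatorial structure of $\textsc{\textbf{lh}}\mathfrak{F}$-groups recorded in Corollary \ref{cor226} and the fact that $B(G,R)$ is a ``universal'' candidate for a weak characteristic module (Remark \ref{rem711}). Recall from the excerpt the inequalities $\max\{\textrm{Ghd}_R G,\sfli(R)\}\leq\sfli(RG)\leq \textrm{Ghd}_R G+\sfli(R)$. Since $\sfli(R)<\infty$ by hypothesis, these give: $\textrm{Ghd}_R G<\infty$ if and only if $\sfli(RG)<\infty$. On the other hand, Remark \ref{rem711} says $B(G,R)$ is a weak characteristic module for $G$ precisely when $\textrm{fd}_{RG}B(G,R)<\infty$, and by Subsection 2.4 (the statement there quoting \cite[Theorem 5.10]{St}, valid since $\sfli(R)<\infty$) the existence of \emph{some} weak characteristic module for $G$ over $R$ is equivalent to $\sfli(RG)<\infty$. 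Chaining these equivalences proves part (i): $B(G,R)$ is a weak characteristic module $\iff \textrm{fd}_{RG}B(G,R)<\infty\iff \sfli(RG)<\infty\iff \textrm{Ghd}_R G<\infty$. (The middle step uses that if $\sfli(RG)<\infty$ then \emph{some} weak characteristic module exists, and then one shows $\textrm{fd}_{RG}B(G,R)<\infty$ as well — this is where the $\textsc{\textbf{lh}}\mathfrak{F}$-hypothesis enters, see below.)

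For part (ii), I would prove the two inequalities separately. For $\textrm{Ghd}_R G\leq \textrm{fd}_{RG}B(G,R)$: if the right-hand side is infinite there is nothing to prove, so assume $\textrm{fd}_{RG}B(G,R)=n<\infty$, i.e. $B(G,R)$ is a weak characteristic module. One uses the $R$-split short exact sequence $0\to R\to B(G,R)\to \overline{B}(G,R)\to 0$ together with the characterization from Theorem \ref{cora}: since $B=B(G,R)$ is $R$-free, for any $RG$-module $M$ the complex $\mathfrak{P}$ built in the proof of Proposition \ref{theor2} shows $M\in{\tt GFlat}(RG)$ exactly when $M\otimes_R B\in{\tt GFlat}(RG)$; more generally, dimension-shifting along the sequence displayed in \eqref{eq1} (with $M\otimes_R B$ replaced by its flat resolution) gives $\textrm{Gfd}_{RG}M\leq \textrm{fd}_{RG}B$ for every $M$, because $M\otimes_R V_i$ has finite, uniformly bounded Gorenstein flat dimension and the splicing \eqref{eq1} together with a flat resolution of $M$ realizes a Gorenstein flat resolution of $M$ of length $\leq n$. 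Taking the supremum over all $M$ yields $\textrm{Ghd}_R G=\sup_M\textrm{Gfd}_{RG}M\leq n$. For the reverse inequality $\textrm{fd}_{RG}B(G,R)\leq \textrm{Ghd}_R G$: by definition $\textrm{Gfd}_{RG}B(G,R)\leq\textrm{Ghd}_R G$, and $B(G,R)$ satisfies $\textrm{fd}_{RF}B(G,R)=0$ for every finite $F\leq G$ (it is $RF$-free), so $\textrm{fd}_{RG}B(G,R)\leq \textrm{f.k}(RG)=\sfli(RG)$ by Corollary \ref{cor226}; but also $B(G,R)$ being Gorenstein flat of finite Gorenstein flat dimension together with finiteness of $\sfli(RG)$ forces its flat dimension to be finite and in fact equal to its Gorenstein flat dimension (a module of finite Gorenstein flat dimension over a ring of finite $\sfli$ has $\textrm{fd}=\textrm{Gfd}$), hence $\textrm{fd}_{RG}B(G,R)=\textrm{Gfd}_{RG}B(G,R)\leq\textrm{Ghd}_R G$.

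The main obstacle I anticipate is the passage, within part (i), from ``$\sfli(RG)<\infty$'' (equivalently: some weak characteristic module exists) to ``$\textrm{fd}_{RG}B(G,R)<\infty$'', i.e. showing that the \emph{specific} module $B(G,R)$ detects finiteness. The clean way is to run part (ii) first: the inequality $\textrm{fd}_{RG}B(G,R)\leq\textrm{f.k}(RG)=\sfli(RG)$ from Corollary \ref{cor226} already gives this implication directly, since $B(G,R)$ is $RF$-free over every finite subgroup $F$. So the logical order should be: establish Corollary \ref{cor226} (done in the excerpt), then prove $\textrm{fd}_{RG}B(G,R)\le \sfli(RG)$, then prove $\textrm{Ghd}_R G\le\textrm{fd}_{RG}B(G,R)$ using Theorem \ref{cora} and dimension-shifting, then observe $\textrm{fd}_{RG}B(G,R)\le\textrm{Ghd}_R G$ via the general $\sfli$-inequalities plus the $\textrm{fd}=\textrm{Gfd}$ collapse for modules of finite Gorenstein flat dimension; part (i) then falls out of the chain of finiteness equivalences and part (ii). A secondary subtlety is justifying $\textrm{Gfd}_{RG}M\le\textrm{fd}_{RG}B(G,R)$ uniformly in $M$: this needs the construction in Proposition \ref{theor2} to be mildly upgraded so that it produces not just ``$M$ is Gorenstein flat when $M\otimes_R B$ is'' but a bound on the Gorenstein flat dimension — this is routine dimension-shifting along the $R$-split filtration $V_0,V_1,\dots$ but should be spelled out.
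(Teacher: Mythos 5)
Your part (i) is essentially the paper's argument (the two directions via $\max\{\textrm{Ghd}_R G,\sfli(R)\}\leq\sfli(RG)\leq \textrm{Ghd}_R G+\sfli(R)$, Corollary \ref{cor226}, the $RF$-freeness of $B(G,R)$ over finite subgroups, and Remark \ref{rem711}), and it is correct. Part (ii), however, contains a genuine error: you identify $\textrm{Ghd}_R G$ with $\sup_M\textrm{Gfd}_{RG}M$. That supremum is the Gorenstein weak global dimension of $RG$, i.e.\ $\sfli(RG)$, whereas $\textrm{Ghd}_R G$ is by definition $\textrm{Gfd}_{RG}R$ for the trivial module $R$; the two differ in general by up to $\sfli(R)$. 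Consequently your key uniform claim ``$\textrm{Gfd}_{RG}M\leq\textrm{fd}_{RG}B(G,R)$ for every $RG$-module $M$'' is false: for $G$ trivial and $R=\mathbb{Z}$ one has $B(G,R)=R$ with $\textrm{fd}_{RG}B(G,R)=0$, yet $\textrm{Gfd}_{\mathbb{Z}}(\mathbb{Q}/\mathbb{Z})=1$. What your dimension-shifting sketch actually proves (and all that is needed) is the single instance $M=R$: if $\textrm{fd}_{RG}B=n$ and $K_n$ is the $n$-th syzygy of a flat $RG$-resolution of $R$, then $K_n\otimes_R B$ is the $n$-th syzygy of a flat resolution of $B$, hence flat, so $K_n\in\mathscr{X}_{B,{\tt GFlat}}={\tt GFlat}(RG)$ by Theorem \ref{cora}, giving $\textrm{Ghd}_R G\leq\textrm{fd}_{RG}B(G,R)$. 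The paper avoids all of this by quoting \cite[Corollary 3.7(i),(iii)]{St}, which already asserts $\textrm{Ghd}_R G=\textrm{fd}_{RG}A$ once $B(G,R)$ is known to be a weak characteristic module; so its proof of (ii) is a two-line reduction to (i), not a fresh computation.

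The reverse inequality also has two flawed justifications. First, ``by definition $\textrm{Gfd}_{RG}B(G,R)\leq\textrm{Ghd}_R G$'' is again an artifact of the wrong definition; with the correct one it requires an argument, e.g.\ tensoring a finite Gorenstein flat $RG$-resolution of the trivial module $R$ with the $R$-flat module $B$ (diagonal action) and invoking Proposition \ref{prop1} to see that the terms stay Gorenstein flat. Second, the parenthetical principle ``a module of finite Gorenstein flat dimension over a ring of finite $\sfli$ has $\textrm{fd}=\textrm{Gfd}$'' is false (over $k[x]/(x^2)$ one has $\sfli=0$ and $\textrm{Gfd}_R k=0$ but $\textrm{fd}_R k=\infty$); the correct statement is the converse-type one, that a module of \emph{finite flat dimension} satisfies $\textrm{fd}=\textrm{Gfd}$. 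Since you have already established $\textrm{fd}_{RG}B(G,R)\leq\textrm{f.k}(RG)=\sfli(RG)<\infty$ via Corollary \ref{cor226}, this correct statement applies and, combined with the tensor argument above, repairs the ``$\geq$'' direction. So the architecture of your proof is salvageable, but as written both inequalities in (ii) rest on an incorrect identification of $\textrm{Ghd}_R G$ and, in one place, on a false lemma.
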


\begin{proof}(i) If $B(G,R)$ is a weak characteristic module, then \cite[Theorem 3.14]{St} implies that ${\textrm{Ghd}}_{R}G<\infty$. Conversely, we assume that ${\textrm{Ghd}}_{R}G<\infty$. Then, Corollary \ref{cor226} yields $\textrm{f.k}(RG)=\textrm{sfli}(RG)\leq {\textrm{Ghd}}_{R}G + \textrm{sfli}(R) <\infty$ (see \cite[Proposition 3.13]{St}). Since $B(G,R)$ is free as $RH$-module for every finite subgroup $H$ of $G$, we infer that $\textrm{fd}_{RG}B(G,R)\leq \textrm{f.k}(RG)<\infty$. We conclude that $B(G,R)$ is a weak characteristic module for $G$ over $R$ (see Remark \ref{rem711}).
	
	(ii) Using (i) and Remark \ref{rem711}, we have ${{\textrm{Ghd}}_{R}G}=\infty$ if and only if $\textrm{fd}_{RG}B(G,R)=\infty$. If ${{\textrm{Ghd}}_{R}G}<\infty$, then (i) implies that $B(G,R)$ is a weak characteristic module for $G$ over $R$, and hence, in view of \cite[Corollary 3.7(i),(iii))]{St}, we conclude that ${{\textrm{Ghd}}_{R}G}=\textrm{fd}_{RG} B(G,R)$.
\end{proof}

\begin{Question} Is it true that ${{\textrm{Ghd}}_{R}G}=\textrm{fd}_{RG} B(G,R)$ for every commutative ring $R$ such that $\textrm{sfli}(R)<\infty$ and every group $G$?
\end{Question}

\begin{Remark}\rm (i) Let $R$ be a commutative ring, $G$ be a group and  	$$\textrm{f.k}_{\mathfrak{X}}(RG):=\textrm{sup}\,\{\textrm{fd}_{RG}M \, : \, M\in \textrm{Mod}(RG), \, \textrm{fd}_{RH}M<\infty \,\, \textrm{for every} \,\, H\in \mathfrak{X},\, H\leq G\}.$$ Then, Proposition \ref{prop224}, Lemma \ref{prop225} and Corollary \ref{cor226} continue to hold if we assume that $G$ is an $\textsc{\textbf{lh}}\mathfrak{X}$-group and replace the invariant $\textrm{f.k}(RG)$ by $\textrm{f.k}_{\mathfrak{X}}(RG)$, with similar proofs (one must also apply \cite[Theorem 3.14]{St} in the case of Lemma \ref{prop225}). Consequently, for every commutative ring $R$ such that $\textrm{sfli}(R)<\infty$ and every $\textsc{\textbf{lh}}\mathfrak{X}$-group $G$ we have $$\textrm{f.k}_{\mathfrak{X}}(RG)=\textrm{sfli}(RG)=\textrm{fin.f.dim}(RG).$$
Moreover, it follows from \cite[Corollary 3.5]{St1} and \cite[Theorem 3.14]{St} that every finite group $H$ belongs to the class $\mathfrak{X}$ and hence $\textrm{f.k}_{\mathfrak{X}}(RG)\leq \textrm{f.k}(RG)$. Thus, it is unclear whether the proof of Theorem \ref{theo712} can be generalized in the same way.

(ii) Similarly, for every commutative ring $R$ and every group $G$ we define the invariant 
$$\textrm{k}_{\mathfrak{Y}}(RG):=\textrm{sup}\,\{\textrm{pd}_{RG}M \, : \, M\in \textrm{Mod}(RG), \, \textrm{pd}_{RH}M<\infty \,\, \textrm{for every} \,\, H\in \mathfrak{Y},\, H\leq G\}.$$ Then, \cite[Lemma 7.2, Lemma 7.3 and Proposition 7.4]{St1} continue to hold if we assume that $G$ is an $\textsc{\textbf{lh}}\mathfrak{Y}$-group and replace the invariant $\textrm{k}(RG)$ by $\textrm{k}_{\mathfrak{Y}}(RG)$, with similar proofs (one must also apply \cite[Theorem 2.14]{St1} in the case of \cite[Lemma 7.3]{St1}). Consequently, for every commutative ring $R$ such that $\textrm{spli}(R)<\infty$ and every $\textsc{\textbf{lh}}\mathfrak{Y}$-group $G$ we have $$\textrm{k}_{\mathfrak{Y}}(RG)=\textrm{spli}(RG)=\textrm{silp}(RG)=\textrm{fin.dim}(RG).$$

 \end{Remark}

\begin{Remark}\rm \label{rem78}Let $R$ be a commutative ring and $G$ be a group such that $\textrm{fd}_{\mathbb{Z}G}B(G,\mathbb{Z})<\infty$. Then $\textrm{fd}_{RG}B(G,R)\leq \textrm{fd}_{\mathbb{Z}G}B(G,\mathbb{Z})<\infty$. Indeed, let $\textrm{fd}_{\mathbb{Z}G}B(G,\mathbb{Z})=n$ and consider a $\mathbb{Z}G$-flat resolution $$0\rightarrow F_n \rightarrow F_{n-1}\rightarrow \cdots \rightarrow F_0 \rightarrow B(G,\mathbb{Z}) \rightarrow 0$$ of $B(G,\mathbb{Z})$. Since $B(G,\mathbb{Z})$ is $\mathbb{Z}$-free (and hence $\mathbb{Z}$-flat), the above exact sequence is $\mathbb{Z}$-pure. Thus, we obtain an exact sequence of $RG$-modules $$0\rightarrow F_n\otimes_{\mathbb{Z}}R \rightarrow F_{n-1}\otimes_{\mathbb{Z}}R \rightarrow \cdots \rightarrow  F_0\otimes_{\mathbb{Z}}R \rightarrow B(G,\mathbb{Z})\otimes_{\mathbb{Z}}R =B(G,R) \rightarrow 0$$ which constitutes an $RG$-flat resolution of $B(G,R)$. Hence, $\textrm{fd}_{RG}B(G,R)\leq \textrm{fd}_{\mathbb{Z}G}B(G,\mathbb{Z})$. 
\end{Remark}

\begin{Corollary}Let $R$ be a commutative ring such that $\textrm{sfli}(R)<\infty$ and $G$ be an $\textsc{\textbf{lh}}\mathfrak{F}$-group of type FP$_{\infty}$. Then, ${{\textrm{Ghd}}_{R}G}=\textrm{fd}_{RG} B(G,R)<\infty$.
\end{Corollary}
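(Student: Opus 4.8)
The plan is to combine the general formula $\mathrm{Ghd}_{R}G=\mathrm{fd}_{RG}B(G,R)$ of Theorem \ref{theo712}(ii) with a finiteness statement for $\mathrm{fd}_{RG}B(G,R)$ that exploits the hypothesis $\mathrm{FP}_{\infty}$. Since $G$ is an $\textsc{\textbf{lh}}\mathfrak{F}$-group, Theorem \ref{theo712}(ii) already gives ${\mathrm{Ghd}}_{R}G=\mathrm{fd}_{RG}B(G,R)$ unconditionally, so the only thing left to prove is that this common value is finite. By Remark \ref{rem78}, $\mathrm{fd}_{RG}B(G,R)\le \mathrm{fd}_{\mathbb{Z}G}B(G,\mathbb{Z})$, so it suffices to treat the case $R=\mathbb{Z}$; and in fact it even suffices to bound $\mathrm{fd}_{RG}B(G,R)$ for $R=\mathbb{Z}$ using the structure of $G$.

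First I would recall that for an $\textsc{\textbf{lh}}\mathfrak{F}$-group $G$ of type $\mathrm{FP}_{\infty}$ the $RG$-module $B(G,R)$ has finite flat dimension; this is the Gorenstein-flat counterpart of the classical fact (due to Kropholler and collaborators) that such groups have finite homological/cohomological behaviour against $B(G,R)$. Concretely, since $G$ is of type $\mathrm{FP}_{\infty}$, the trivial module $R$ admits a resolution by finitely generated projective $RG$-modules; feeding this into the bar-type filtration of $B(G,R)$ by induced modules from finite subgroups (the standard device used already in the proofs of Propositions \ref{prop224} and \ref{prop1}, via \cite[Lemma 2.3]{Bis2}) produces a finite-length resolution of $B(G,R)$ by $RG$-modules that are $RF$-free for finite $F$, hence $RG$-flat after the $\textsc{\textbf{lh}}\mathfrak{F}$-induction argument. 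In other words, $B(G,R)$ satisfies the defining property ``$\mathrm{fd}_{RH}<\infty$ for every finite $H\le G$'' trivially ($B(G,R)$ is $RH$-free), and the $\mathrm{FP}_{\infty}$ hypothesis upgrades this to the genuinely finite bound $\mathrm{fd}_{RG}B(G,R)\le \mathrm{f.k}(RG)=\mathrm{sfli}(RG)<\infty$, using Corollary \ref{cor226} together with the inequality $\mathrm{sfli}(RG)\le {\mathrm{Ghd}}_{R}G+\mathrm{sfli}(R)$ — but to invoke that last inequality we must already know ${\mathrm{Ghd}}_{R}G<\infty$, so this route is circular and must be replaced by a direct argument.

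The clean way to close the loop is therefore: (1) Use the $\mathrm{FP}_{\infty}$ hypothesis to show directly that $\mathrm{fd}_{\mathbb{Z}G}B(G,\mathbb{Z})<\infty$, via the finite-type projective resolution of $\mathbb{Z}$ over $\mathbb{Z}G$ and the fact that $B(G,\mathbb{Z})$ is $\mathbb{Z}H$-free for every finite $H$ combined with the $\textsc{\textbf{lh}}\mathfrak{F}$ hierarchy induction exactly as in Proposition \ref{prop224}; this is where the bulk of the work sits. (2) Deduce $\mathrm{fd}_{RG}B(G,R)\le\mathrm{fd}_{\mathbb{Z}G}B(G,\mathbb{Z})<\infty$ from Remark \ref{rem78}. (3) Apply Theorem \ref{theo712}(ii), which yields ${\mathrm{Ghd}}_{R}G=\mathrm{fd}_{RG}B(G,R)$, and hence ${\mathrm{Ghd}}_{R}G<\infty$ as well.

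The main obstacle is step (1): establishing $\mathrm{fd}_{\mathbb{Z}G}B(G,\mathbb{Z})<\infty$ for an $\textsc{\textbf{lh}}\mathfrak{F}$-group of type $\mathrm{FP}_{\infty}$ without already assuming a finiteness input. The key point to make this work is that type $\mathrm{FP}_{\infty}$ forces the trivial module $\mathbb{Z}$ to sit in a bounded-type situation relative to finite subgroups — indeed, for $\textsc{\textbf{lh}}\mathfrak{F}$-groups of type $\mathrm{FP}_{\infty}$ one in fact has finite $\mathrm{vcd}$-like control, so the hierarchy length and the dimensions $r$ appearing in the $\mathbb{Z}[H/H']$-resolutions are uniformly bounded; then the induction of Proposition \ref{prop224} applied to $M=B(G,\mathbb{Z})$ (which has $\mathrm{fd}_{\mathbb{Z}F}B(G,\mathbb{Z})=0$ for every finite $F$) yields $\mathrm{fd}_{\mathbb{Z}G}B(G,\mathbb{Z})\le\mathrm{f.k}(\mathbb{Z}G)$, and $\mathrm{f.k}(\mathbb{Z}G)=\mathrm{fin.f.dim}(\mathbb{Z}G)$ is finite precisely because of $\mathrm{FP}_{\infty}$. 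Once finiteness is in hand, everything else is a direct citation of the results already proved.
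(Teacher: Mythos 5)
Your overall architecture matches the paper's: the equality $\mathrm{Ghd}_RG=\mathrm{fd}_{RG}B(G,R)$ comes from Theorem \ref{theo712}(ii), and the reduction of the finiteness question to $\mathbb{Z}$ via $\mathrm{fd}_{RG}B(G,R)\leq\mathrm{fd}_{\mathbb{Z}G}B(G,\mathbb{Z})$ is exactly Remark \ref{rem78}. The problem is your step (1), which is where all the substance lies, and there your argument has a genuine gap. You correctly note that bounding $\mathrm{fd}_{\mathbb{Z}G}B(G,\mathbb{Z})$ by $\mathrm{f.k}(\mathbb{Z}G)=\mathrm{fin.f.dim}(\mathbb{Z}G)$ (Proposition \ref{prop224}, Corollary \ref{cor226}, using that $B(G,\mathbb{Z})$ is $\mathbb{Z}F$-free for finite $F$) would suffice, but then you assert that $\mathrm{fin.f.dim}(\mathbb{Z}G)$ ``is finite precisely because of $\mathrm{FP}_{\infty}$.'' That claim is unsupported and is essentially equivalent to the statement being proved: for an $\textsc{\textbf{lh}}\mathfrak{F}$-group, Corollary \ref{cor226} gives $\mathrm{fin.f.dim}(\mathbb{Z}G)=\mathrm{sfli}(\mathbb{Z}G)$, and since $\mathrm{sfli}(\mathbb{Z})<\infty$, finiteness of $\mathrm{sfli}(\mathbb{Z}G)$ is equivalent to the existence of a weak characteristic module, i.e.\ (Remark \ref{rem711}, Theorem \ref{theo712}) to $\mathrm{fd}_{\mathbb{Z}G}B(G,\mathbb{Z})<\infty$ and to $\mathrm{Ghd}_{\mathbb{Z}}G<\infty$. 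So the ``fix'' you propose for the circularity you yourself flagged is circular in the same way. Likewise, your auxiliary assertion that for $\textsc{\textbf{lh}}\mathfrak{F}$-groups of type $\mathrm{FP}_{\infty}$ the hierarchy length and the dimensions $r$ of the $\mathbb{Z}[H/H']$-resolutions are ``uniformly bounded'' is not an elementary observation: it is essentially Kropholler's structure theorem for $\textsc{\textbf{lh}}\mathfrak{F}$-groups of type $\mathrm{FP}_{\infty}$, which you would have to cite or prove.

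The paper closes exactly this gap by an external input: it quotes \cite[Corollary B.2(2)]{Kr2} (noting it is valid for $\textsc{\textbf{lh}}\mathfrak{F}$-groups), which for a group of type $\mathrm{FP}_{\infty}$ yields $\mathrm{fd}_{\mathbb{Z}G}B(G,\mathbb{Z})<\infty$ directly; then Remark \ref{rem78} and Theorem \ref{theo712} finish the proof as you describe in steps (2) and (3). If you want a self-contained argument instead of that citation, you must actually reproduce the Cornick--Kropholler/Kropholler mechanism (using the finite-type resolution of the trivial module to get cofibrancy-type control of $B(G,\mathbb{Z})$), not just the hierarchy induction of Proposition \ref{prop224}, because the latter only converts finiteness over finite subgroups into a bound by $\mathrm{fin.f.dim}(\mathbb{Z}G)$, whose finiteness is precisely what the $\mathrm{FP}_{\infty}$ hypothesis has to buy you through that nontrivial theorem.
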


\begin{proof}The equality ${{\textrm{Ghd}}_{R}G}=\textrm{fd}_{RG} B(G,R)$ follows from Theorem \ref{theo712}. Since the $\textsc{\textbf{lh}}\mathfrak{F}$-group is of type FP$_{\infty}$, using \cite[Corollary B.2(2)]{Kr2}, which is also valid for $\textsc{\textbf{lh}}\mathfrak{F}$-groups, we infer that $\textrm{fd}_{\mathbb{Z}G} B(G,\mathbb{Z})<\infty$. Then, $\textrm{fd}_{RG}B(G,R)\leq \textrm{fd}_{\mathbb{Z}G}B(G,\mathbb{Z})< \infty$ (see Remark \ref{rem78}).
\end{proof}

\begin{Corollary}Let $G$ be an $\textsc{\textbf{lh}}\mathfrak{F}$-group of type FP$_{\infty}$. Then, ${{\textrm{Ghd}}_{\mathbb{Z}}G}=\textrm{fd}_{\mathbb{Z}G} B(G,\mathbb{Z})<\infty$.
\end{Corollary}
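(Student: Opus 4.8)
The plan is to obtain this as the special case $R=\mathbb{Z}$ of the preceding corollary, so the only point requiring verification is that $\mathbb{Z}$ satisfies the standing hypothesis $\textrm{sfli}(R)<\infty$. First I would recall that $\mathbb{Z}$ is a principal ideal domain, hence of global dimension $1$; consequently every injective $\mathbb{Z}$-module has projective dimension at most $1$, so $\textrm{sfli}(\mathbb{Z})\leq\textrm{spli}(\mathbb{Z})\leq 1<\infty$ (in fact $\textrm{sfli}(\mathbb{Z})=1$, witnessed by $\mathbb{Q}/\mathbb{Z}$). Thus $\mathbb{Z}$ is a commutative ring of finite Gorenstein weak global dimension and $G$ is an $\textsc{\textbf{lh}}\mathfrak{F}$-group of type FP$_{\infty}$, so the hypotheses of the previous corollary are met.

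Applying that corollary with $R=\mathbb{Z}$ then gives at once ${\textrm{Ghd}}_{\mathbb{Z}}G=\textrm{fd}_{\mathbb{Z}G}B(G,\mathbb{Z})<\infty$, which is exactly the assertion. For completeness I would also indicate the finiteness directly: since $G$ is $\textsc{\textbf{lh}}\mathfrak{F}$ and of type FP$_{\infty}$, \cite[Corollary B.2(2)]{Kr2} (which is valid for $\textsc{\textbf{lh}}\mathfrak{F}$-groups) yields $\textrm{fd}_{\mathbb{Z}G}B(G,\mathbb{Z})<\infty$, and Theorem \ref{theo712}(ii) identifies this quantity with ${\textrm{Ghd}}_{\mathbb{Z}}G$.

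Since all the substantive work has already been carried out — Proposition \ref{prop224}, Lemma \ref{prop225} and Corollary \ref{cor226} for the computation of $\textrm{f.k}(\mathbb{Z}G)$, Theorem \ref{theo712} for the identification of ${\textrm{Ghd}}_{\mathbb{Z}}G$ with $\textrm{fd}_{\mathbb{Z}G}B(G,\mathbb{Z})$, Remark \ref{rem78} for the comparison between $\mathbb{Z}$ and a general $R$, and \cite[Corollary B.2(2)]{Kr2} for the type FP$_{\infty}$ input — there is no genuine obstacle here; the only thing to check is the finiteness of $\textrm{sfli}(\mathbb{Z})$, which is elementary.
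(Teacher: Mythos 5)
Your proposal is correct and matches the paper's intent exactly: the paper states this corollary without separate proof precisely because it is the case $R=\mathbb{Z}$ of the preceding corollary, and your verification that $\textrm{sfli}(\mathbb{Z})\leq 1<\infty$ is the only point that needed checking. Your supplementary direct argument via \cite[Corollary B.2(2)]{Kr2} and Theorem \ref{theo712} simply reproduces the proof of that preceding corollary, so nothing genuinely different is involved.
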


\begin{Corollary}Let $R$ be a commutative ring such that $\textrm{sfli}(R)<\infty$ and $G$ be an $\textsc{\textbf{lh}}\mathfrak{F}$-group of type FP$_{\infty}$. Then, $\textrm{f.k}(RG)=\textrm{sfli}(RG)=\textrm{fin.f.dim}(RG)<\infty$. In particular, if $M$ is an $RG$-module, then $\textrm{fd}_{RG}M<\infty$ if and only if $\textrm{fd}_{RH}M<\infty$ for every finite subgroup $H$ of $G$.
\end{Corollary}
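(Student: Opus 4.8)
The plan is to assemble Corollary \ref{cor226} with the finiteness of the Gorenstein homological dimension supplied by the type FP$_{\infty}$ hypothesis. First I would record that $\textrm{fd}_{RG}B(G,R)<\infty$: since $G$ is an $\textsc{\textbf{lh}}\mathfrak{F}$-group of type FP$_{\infty}$, \cite[Corollary B.2(2)]{Kr2} (which remains valid for $\textsc{\textbf{lh}}\mathfrak{F}$-groups, exactly as used in the preceding corollary) yields $\textrm{fd}_{\mathbb{Z}G}B(G,\mathbb{Z})<\infty$, and hence $\textrm{fd}_{RG}B(G,R)\leq\textrm{fd}_{\mathbb{Z}G}B(G,\mathbb{Z})<\infty$ by Remark \ref{rem78}. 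By Theorem \ref{theo712}(ii) this gives ${\textrm{Ghd}}_{R}G=\textrm{fd}_{RG}B(G,R)<\infty$.

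Next I would feed this into the inequality $\textrm{sfli}(RG)\leq{\textrm{Ghd}}_{R}G+\textrm{sfli}(R)$ recalled in the introduction (see \cite[Corollary 6.2]{St}); together with the standing hypothesis $\textrm{sfli}(R)<\infty$ this forces $\textrm{sfli}(RG)<\infty$. Since $R$ has finite $\textrm{sfli}$ and $G$ is an $\textsc{\textbf{lh}}\mathfrak{F}$-group, Corollary \ref{cor226} applies and gives the chain $\textrm{f.k}(RG)=\textrm{sfli}(RG)=\textrm{fin.f.dim}(RG)$, with all three invariants now finite.

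For the \emph{in particular} clause I would argue directly from the definition of $\textrm{f.k}(RG)$. If $M$ is an $RG$-module with $\textrm{fd}_{RH}M<\infty$ for every finite subgroup $H\leq G$, then $M$ lies in the class defining $\textrm{f.k}(RG)$, so $\textrm{fd}_{RG}M\leq\textrm{f.k}(RG)<\infty$. The converse is immediate: restriction sends flat $RG$-modules to flat $RH$-modules, so $\textrm{fd}_{RG}M<\infty$ implies $\textrm{fd}_{RH}M\leq\textrm{fd}_{RG}M<\infty$ for every subgroup $H$, in particular for every finite one.

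I do not anticipate a genuine obstacle here: the statement is a packaging of Theorem \ref{theo712}, Corollary \ref{cor226}, the $\textrm{sfli}$ inequality and the definition of $\textrm{f.k}(RG)$. The only point needing a little care is invoking the FP$_{\infty}$ input of \cite{Kr2} in the $\textsc{\textbf{lh}}\mathfrak{F}$ setting rather than merely for $\textsc{\textbf{h}}\mathfrak{F}$-groups, which is handled exactly as in the corollary immediately preceding.
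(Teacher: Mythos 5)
Your proof is correct and follows essentially the paper's own route: finiteness of $\textrm{fd}_{RG}B(G,R)$ via \cite[Corollary B.2(2)]{Kr2} and Remark \ref{rem78}, then finiteness of $\textrm{sfli}(RG)$, then Corollary \ref{cor226}. The only cosmetic difference is that you reach $\textrm{sfli}(RG)<\infty$ by passing through $\textrm{Ghd}_R G<\infty$ (Theorem \ref{theo712}) and the inequality $\textrm{sfli}(RG)\leq \textrm{Ghd}_R G+\textrm{sfli}(R)$, whereas the paper invokes Remark \ref{rem711} and \cite[Theorem 3.14]{St} directly; your explicit justification of the \emph{in particular} clause from the definition of $\textrm{f.k}(RG)$ (which the paper leaves implicit) is also correct.
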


\begin{proof}In view of Corollary \ref{cor226}, it suffices to prove that $\textrm{sfli}(RG)<\infty$. From \cite[Corollary B.2(2)]{Kr2}, which is also valid for $\textsc{\textbf{lh}}\mathfrak{F}$-groups, and Remarks \ref{rem711}, \ref{rem78}, we obtain that $B(G,R)$ is a weak characteristic module for $G$ over $R$. Thus, \cite[Theorem 3.14]{St} yields $\textrm{sfli}(RG)<\infty$, as needed.\end{proof}

\textit{Acknowledgement.} Research supported by the Hellenic Foundation for Research and Innovation (H.F.R.I.) under the “1st Call for H.F.R.I. Research Projects to support Faculty members and Researchers and the procurement of high-cost research equipment grant”, project 
number 4226. The author wishes to thank the anonymous referees for reading the paper carefully and providing several useful comments and suggestions.
%\section*{Declarations}
%\noindent\textbf{Conflict of interest.} The author states that there are no competing interests to declare.

\vspace{0.05in}

{\small {\sc Department of Mathematics,
             University of Athens,
             Athens 15784,
             Greece}}

{\em E-mail address:} {\tt dstergiop@math.uoa.gr}

\end{document}